\theoremstyle{plain}
\newtheorem{theorem}{Theorem}[section]
\newtheorem{lemma}[theorem]{Lemma}
\newtheorem{proposition}[theorem]{Proposition}
\newtheorem{remark}[theorem]{Remark}
\newtheorem{definition}[theorem]{Definition}
\theoremstyle{definition}
\theoremstyle{remark}
\numberwithin{equation}{section}
\tikzstyle xyax=[thin]
\tikzstyle mlin=[thick]
\tikzstyle slin=[]
\newcommand{\as}{{\mathcal A}}
\newcommand{\dd}{{\mathrm{d}}}
\newcommand{\Ha}{{\mathcal{H}}}
\newcommand{\J}{{\mathcal J}}
\newcommand{\E}{{\mathcal E}}
\newcommand{\R}{{\mathbb R}}
\newcommand{\N}{{\mathbb N}}
\newcommand{\loc}{\text{loc}}
\newcommand{\Om}{\Omega}
\newcommand{\Omb}{\overline{\Omega}}
\newcommand{\om}{\omega}
\newcommand{\eps}{\varepsilon}
\newcommand{\weakst}{\stackrel{\ast}{\rightharpoonup}}
\definecolor{verde}{RGB}{20,150,100}
\newcommand{\weak}{\rightharpoonup}
\begin{document}
\title
{A free discontinuity approach to optimal profiles in Stokes flows}
\author[D. Bucur]
{Dorin Bucur}
\address[Dorin Bucur]{Laboratoire de Math\'ematiques CNRS UMR 5127
Universit\'e de Savoie Mont Blanc Campus Scientifique 73 376 Le Bourget-Du-Lac, France}
\email[D. Bucur]{  dorin.bucur@univ-savoie.fr}
\author[A. Chambolle]
{Antonin Chambolle}
\address[Antonin Chambolle]{Ceremade, CNRS and Universit\'{e} de Paris-Dauphine PSL, Place de Lattre de Tassigny, 75775 Paris Cedex 16, France}
\email[A. Chambolle]{chambolle@ceremade.dauphine.fr}
\author[A. Giacomini]
{Alessandro Giacomini}
\address[Alessandro Giacomini]{DICATAM, Sezione di Matematica, Universit\`a degli Studi di Brescia, Via Branze 43, 25123 Brescia, Italy}
\email[A. Giacomini]{alessandro.giacomini@unibs.it}
\author[M. Nahon]
{Micka\"{e}l Nahon}
\address[Micka\"{e}l Nahon]{ Max-Planck-Institut f\"ur Mathematik in den Naturwissenschaften\\
04103 Leipzig,
Germany
}
\email[M. Nahon]{mickael.nahon@mis.mpg.de}
\begin{abstract}
In this paper we study obstacles immerged in a Stokes flow with Navier boundary conditions. We prove the existence and regularity of an obstacle with minimal drag, among all shapes of prescribed volume and controlled surface area, taking into account that these shapes may naturally develop geometric features of codimension $1$. The existence is carried  out  in the framework of free discontinuity problems and leads to a relaxed solution in the space of special functions of bounded deformation ($SBD$). In dimension $2$, we prove that the solution is classical. 
\end{abstract}
\keywords{Free discontinuity problems, Stokes flow, Navier boundary conditions, drag}
\subjclass[2020]{ 49Q10, 76D07,  76D55, 35R35}

\maketitle
\tableofcontents

\section{Introduction}
\label{sec_intr}
Consider an obstacle $E\subset \R^d$ ($d=2,3$ in real applications) contained in a (finite) channel $\Om$ in which a fluid with viscosity coefficient $\mu>0$ is flowing. Assume that the flow is stationary and incompressible, and that the associated velocity field $u$ is equal to a constant vector $V_\infty$ on the walls of the channel. The obstacle $E$ experiences  a force, whose component in direction of $V_\infty$ will be denoted by 
$Drag(E)$ and is usually called the {\it drag force}. If we further assume that the velocity of the fluid satisfies the Stokes equation in $\Om \setminus E$ and obeys to {\it Navier boundary conditions} on $\partial E$, the expression of the drag force turns out to be given (up to a multiplicative constant) by 
\begin{equation}
\label{eq_drag-intr}
Drag(E)=2\mu\int_{\Om\setminus E}|e(u)|^2\,dx+\beta\int_{\partial E}|u|^2\,d\Ha^{d-1},
\end{equation}
where $e(u):=\frac{1}{2}(Du+(Du)^*)$ denotes the symmetrized gradient of $u$ and $\beta>0$ is the friction coefficient (we refer to Subsection \ref{subsec_drag} for details).
\par
We are interested in minimizing the drag force among all obstacles $E$ with a prescribed volume and controlled surface area. Precisely we look for the existence of such an optimal obstacle and  for  its qualitative properties. The existence question is not very relevant as soon as one imposes strong geometric constraints on the admissible obstacles (e.g. convexity, uniform cone conditions, etc.) since this may hide some specific features which would naturally occur. Indeed, letting the geometry of the obstacle to be completely free, some qualitative  behavior  (blocked by rigid geometric constraints) can be observed. This is the case of our problem, where the optimal obstacle (that we prove to exist without imposing any geometric or topological constraint) may be composed, roughly speaking as a union of a body with the prescribed volume and pieces of surfaces of dimension $d-1$. Those surfaces do not have volume, but count for the total surface area $\Ha^{d-1}(\partial E)$ and of course have a strong influence on the flow. 
\par
Penalizing the surface area and the volume, the model problem we are interested in can be written as 
\begin{equation*}
\label{eq_pb-intr}
\min_{E} \left\{Drag(E)+c\Ha^{d-1}(\partial E)+f(|E|)\right\},
\end{equation*}
where $c>0$ and $f:(0,|\Om|)\to\R\cup\{+\infty\}$ is a lower semicontinuous function. Roughly speaking, the terms involving perimeter and volume can be thought as a price to pay in order to build the obstacle $E$, and we can give the two relevant choices of function $f$:
\[
f(m)=+\infty 1_{\{m\neq m_0\}}\text{ for some }m_0\in (0,|\Om|),\text{ or }f(m)=-\lambda m\text{ for some }\lambda>0.
\]
\vskip10pt
Many similar optimisation problems have been considered under the ``no-slip'' boundary condition, meaning flows for which $u=0$ at $\partial E$. Under volume constraint and an a priori symmetry hypothesis  around an axis parallel to the flow, the minimal drag question has been studied in \cite{W71} on smooth surfaces. In \cite{P73}, still under symmetry hypotheses, it was conjectured that the optimal profile in three dimensions is a prolate spheroid with sharp ends of angle of $120$ degrees. In the same symmetry context, let us also mention the slender body approximation of \cite{T68}. We also refer the reader to the paper by \u{S}ver\'ak \cite{Sv93} who, in two dimensions, proves the existence of an optimal obstacle under  topological hypotheses, namely that the obstacle has at most a given number of connected components (in particular this number can be equal to $1)$. The  proof is genuinely two dimensional and can not be extended to higher dimensions. 
\par
The Navier boundary condition gives many new challenges, namely the possible apparition of lower dimensional structures in the obstacle that minimize the drag, something which was absent under the no-slip condition. The Navier boundary condition may be seen as a partial adherence to the boundary of the obstacle, and it may be asymptotically obtained as a limit of flows with {\it perfect slip} on an obstacle with rough boundary. More precisely, a periodic microstructure with the right scaling on the boundary is modelled at the limit by a Navier boundary condition, as was proved in \cite{CLS_10}. In dimension higher than two it is also necessary to take into account more complex geometries for the microstructure, which at the limit produce an anisotropic factor that favors certain directions of the flow. Moreover, infinitesimal boundary perturbations can dramatically modify the solution of the Stokes equation with Navier boundary conditions, while in presence of no-slip boundary conditions the solution remains stable.  We refer the reader to \cite{BFN10} for an analysis of those phenomena and for a discussion on the pertinence of the Navier boundary conditions in physical models.

\vskip10pt
For a fixed obstacle $E$, the minimization of the drag with respect to the friction parameter $\beta$ of the Navier conditions (meaning, from a physical point of view, with respect to the microstructure on the boundary) has been studied in \cite{BB_13}, for both Stokes and Navier-Stokes flows. While for Stokes flows the drag is increasing with the friction parameter, an important observation which occurs for the Navier-Stokes equation is that the monotonicity of the drag with respect to the  parameter $\beta$ does not hold. This is a reason for which the results we give in this paper for the Stokes flows are not expected to hold, as such, for the Navier-Stokes equation.

\vskip10pt
 Since the stationary velocity field associated to a Lipschitz obstacle $E$ turns out to be characterized variationally as the minimizer of the right hand side of \eqref{eq_drag-intr} in the class of admissible velocities 
\[
\mathcal{V}^{\text{reg}}_{E,V_\infty}(\Om)=\left\{u\in H^1(\Om\setminus E):\text{div} u=0,\ u_{|\partial E}\cdot\nu_E=0,\ u_{|\partial\Om}=V_\infty\right\}
\]
(see \eqref{eq_def-vub} in Subsection \ref{subsec_flow} for more details), we can conveniently rephrase the minimization problem by letting also the velocity fields intervene explicitely in the form
\par
\begin{equation}
\label{eq_pb-min-intr}
\min_{E,u\in \mathcal{V}^{\text{reg}}_{E,V_\infty}(\Om)} \left\{2\mu\int_{\Om\setminus E}|e(u)|^2\,dx+\beta\int_{\partial E}|u|^2\,d\Ha^{d-1}+c\Ha^{d-1}(\partial E)+f(|E|)\right\}.
\end{equation}
The first main goal of the paper is to find suitable relaxations of problem \eqref{eq_pb-min-intr} for which we can prove the existence of minimizers  without any a priori constraint on the regularity or the topology of the sets $E$.

\vskip10pt
In order to avoid unnatural geometric restrictions on the obstacle $E$, it is natural in view of the third term appearing in \eqref{eq_pb-min-intr} to let it vary within the class of sets of {\it finite perimeter} (see Subsection \ref{subsec_bv}), and replace the topological boundary with reduced one $\partial^*E$. 
\par
In order to describe properly obstacles with very narrow spikes which in the limit degenerate to $(d-1)$-surfaces and that cannot be taken into account through the reduced boundary, it is convenient to consider admissible velocity fields which can be {\it discontinuous} outside $E$ (see Subsection \ref{subsec_opt-pb}). Since the symmetrized gradient $e(u)$ is involved explicitly in \eqref{eq_pb-min-intr}, a natural family for the admissible velocities is given by the space of {\it functions of bounded deformation} $SBD$. The natural relaxation of the energy takes the form  
\begin{equation}\label{eq_JEu-intr}
\begin{split}
\J(E,u):=&2\mu\int_{\Om\setminus E} |e(u)|^2\,dx+\beta\int_{\partial^*E}|u^+|^2\,d\Ha^{d-1}+\beta\int_{J_u\setminus \partial^*E}[|u^+|^2+|u^-|^2]\,d\Ha^{d-1}\\
&+c\Ha^{d-1}(\partial^*E)+2c\Ha^{d-1}(J_u\setminus \partial^*E)+f(|E|),
\end{split}
\end{equation}
 where $u$ is set equal to zero a.e. in $E$, while $J_u$ denotes the discontinuity set of $u$ and $u^\pm$ are the traces of $u$ on $\partial^*E$ and $J_u$ (the trace $u^-$ vanishes on $\partial^*E$ by the choice of orientation, while $u^+$ is on the outward side).
\par
Within this framework the {\it global} obstacle is given by $E \cup J_u$, so that it contains also {\it lower dimensional parts}, namely $J_u\setminus\partial^*E$:  roughly speaking, for the optimal velocity these discontinuous regions generate $(d-1)$-surfaces which correspond to volumeless,  lower dimensional subsets of the optimal obstacle. 
\par
Admissible velocities must be tangent to the obstacles, meaning that  not only $u$ is tangent to $\partial^*E$, but also the two traces  $u^\pm$ are orthogonal to the normal $\nu_u$ along the jump set $J_u$. The contribution of the {\it Navier surface term} takes naturally into account the contribution from both sides given by $u^\pm$.    Concerning the perimeter term, we count {\it twice} the lower dimensional parts because we see the relaxed obstacle as a limit of regular obstacles, such that points of $J_u\setminus \partial^*E$ correspond to thin parts of the regular obstacle that collapse on a lower-dimensional structure. We could also see the perimeter term as a price to pay in order to construct the obstacle and just keep $\Ha^{d-1}(\partial^*E\cup J_u)$ instead, and the main results of the paper would not be affected.
\par
The  relaxed optimization problem can be seen as a minimization problem on the pairs $(E,u)$ which has the features of classical geometrical problems for $E$ coupled with a {\it free discontinuity problem} for $u$, with a surface term depending on the traces which are subject to suitable tangency constraints and boundary conditions.

\vskip15pt

The first main results of the paper (Theorem \ref{main1}) concerns the existence of minimizers for the relaxed functional $\J$ in \eqref{eq_JEu-intr} among the class of admissible configurations (see Definition \ref{def:adm-Eu} for the precise definition).

The main difficulties we have to face in order to prove that the problem is well posed are the following:
\begin{itemize}
\item[(a)] the closure of the {\it non-penetration} constraint for the velocity on $\partial^*E\cup J_u$ under the natural weak convergence of the problem;
\item[(b)] the lower semicontinuity of energies of the form
\begin{equation}
\label{eq_nav-intr}
\int_{J_u}[|u^+|^2+|u^-|^2]\,d\Ha^{d-1}
\end{equation}
associated to the Navier conditions.
\end{itemize}
Point (a) is a consequence of a lower semicontinuity result for the energy 
$$
\int_{J_u}\left[ |u^+\cdot \nu_u|+|u^-\cdot \nu_u|\right]\,d\Ha^{d-1}
$$
which is proved in Theorem \ref{thm:sci-normal}, by resorting to recent lower semicontinuity results for functionals on $SBD$ by Friedrich, Perugini and Solombrino \cite{FPS}.
\par
The energy of point (b) naturally appears in a scalar setting when dealing with shape optimization problems involving {\it Robin boundary conditions} (see e.g. \cite{BucGiac, bugi15, bugi2015, CaKri}), and it is easily seen to enjoy lower semicontinuity properties by working with sections. The lower semicontinuity result in the vectorial SBD setting is given by Theorem \ref{thm:lsc-phi} and cannot rely on an easy argument by sections, which instead would yield the lower semicontinuity of an energy of the form
$$
\int_{J_u}\left[ |u^+\cdot \xi|^2+|u^-\cdot \xi|^2\right]|\xi\cdot \nu_u|\,d\Ha^{d-1}
$$
with $\xi\in \R^d$ with $|\xi|=1$: the optimization in $\xi$ in order to recover \eqref{eq_nav-intr} does not seem feasible in dimension $d\geq 3$. We thus follow a different strategy based on a blow up argument in which we reconstruct the vector quantities $u^\pm$ by controlling them along a sufficiently high number of directions (see Subsection \ref{subsec_lsc-upm} for details): in this way we can deal with more general energy densities of the form $\phi(u^+)+\phi(u^-)$, where $\phi$ is a lower semicontinuous function.

\vskip10pt
The second main result of the paper (see Theorem \ref{main2}) concerns the regularity of the relaxed minimizers of \eqref{eq_JEu-intr}.  Provided that the  volume penalization function $f$ is Lipschitz and that we are in two dimensions, we prove that  for a minimizer $(E,u)$ of $\J$, the optimal obstacle $E\cup J_u$ is a closed set, while the optimal velocity $u$ is a smooth Sobolev function outside the obstacle, recovering somehow the classical setting of the problem. More precisely we show that
\begin{equation}
\label{eq:closed-intr}
\Ha^1(\Om\cap\overline{\partial^*E\cup J_u}\setminus (\partial^*E\cup J_u))=0,
\end{equation}
so that the optimal obstacle can be described as the closed set obtained by the complement of the connected components of $\Om\setminus\overline{\partial^*E\cup J_u}$ on which $u$ does not vanish identically.

 The technical ideas to prove \eqref{eq:closed-intr} stem  from the pioneering result of De Giorgi, Carriero and Leaci on the Mumford-Shah problem \cite{DGCL}, where the key of the proof is a decay estimate obtained by a contradiction/compactness argument. For vectorial problems, a similar strategy, but definitely more involved, was used for the Griffith fracture problem in \cite{CFI_19} (for the two-dimensional case) and in \cite{CCI_19} (for higher dimension). In the fracture problem, the key compactness result relies on the possibility to approximate a field $u\in SBD([-1,1]^d)$ with a small jump set by a Sobolev function which is locally controlled in $H^1$ (via the classical Korn inequality).
\par
In our case,  we follow a similar approximation procedure,  but we have to handle two additional constraints: incompressibility and non-penetration at the jumps. From a technical point of view, this  is problematic since the bound in \cite{CFI_19} in not strong enough to stay in divergence-free vector fields and the method in \cite{CCI_19} creates new jumps on which the non-penetration constraint is not a priori verified. However, when restricted to two dimensions, the method of \cite{CCI_19} leads to a stronger result,  so that  both constraints can be handled. \newline

The paper is organized as follows. In Section \ref{sec_prel} we recall fix the notation and recall some basic facts concerning sets of finite perimeter, functions of bounded deformation and Hausdorff convergence of compact sets. Section \ref{sec_stokes} is devoted to the precise exposition of the drag optimization problem. In Section \ref{sec_ex-ob} we detail the relaxation of the problem in the family of obstacle of finite perimeter and with velocities of bounded deformation, and formulate  the main results of the paper concerning the existence of minimizers (in any dimension) and their regularity in dimension two. The proof of the existence of minimizers is given in Section \ref{sec_proof-main}, and it is based on some technical results for $SBD$ functions collected in Section \ref{sec_sbd-tech}, while the regularity result is proved in Section \ref{sec-reg}.

\section{Notations and Preliminaries}
\label{sec_prel}

\subsection{Basic notation.} If $E \subseteq \R^d$, we will denote with $|E|$ its $d$-dimensional Lebesgue measure, and by $\Ha^{d-1}(E)$ its $(d-1)$-dimensional Hausdorff measure: we refer to \cite[Chapter 2]{EvansGariepy} for a precise definition, recalling that for sufficiently regular sets $\Ha^{d-1}$ coincides with the usual area measure. Moreover, we denote by $E^c$ the complementary set of $E$, and by $1_E$ its characteristic function, i.e., $1_E(x)=1$ if $x \in E$, $1_E(x)=0$ otherwise.  In addition we will say that $E_1\Subset E_2$ if $\overline{E_1}\subset E_2$. Finally we will denote with $Q_{x,r}\subseteq \R^d$ the cube of center $x$ and side $r$: when $x=0$, we will simply write $Q_r$.  
\par
If $A \subseteq \R^d$ is open and $1 \leq p \leq +\infty$, we denote by $L^p(A)$ the usual space of $p$-summable functions on $A$ with norm indicated by $\|\cdot\|_p$. $W^{1,p}(A)$ will stand for the Sobolev space of functions in $L^p(A)$ whose gradient in the sense of distributions belongs to $L^p(A;\R^d)$. Finally, given a finite dimensional unitary space $Y$, we will denote by $\mathcal{M}_b(A;Y)$ will denote the space of $Y$-valued Radon measures on $A$, which can be identified with the dual of $Y$-valued continuous functions on $A$ vanishing at the boundary.
\par
We will denote by $M^{d\times m}$ the set of $d\times m$ matrices with values in $\R$: when $d=m$ we will denote by ${\rm M}^{d\times d}_{sym}$ the subspace of $d\times d$ symmetric matrices. For $a\in \R^d$ and $b\in \R^m$ we will denote with $a\otimes b$ the element of $M^{d\times m}$ such that
$$
(a\otimes b)_{ij}=a_ib_j,
$$
while if $a,b\in\R^d$ we denote with $a\odot b$ the matrix in ${\rm M}^{d\times d}_{sym}$ such that
$$
(a\odot b)_{ij}=\frac{a_ib_j+a_jb_i}{2}.
$$
\par
Given $\xi\in \R^d$ with $|\xi|=1$, we denote with $\xi^\perp$  the hyperplane through the origin orthogonal to $\xi$. If $E\subseteq \R^d$, we set
\begin{equation}\label{eq:Exi}
E^\xi:=\pi_{\xi^\perp}(E),
\end{equation}
where $\pi$ denotes the orthogonal projection, and for $y\in \xi^\perp$ we set
\begin{equation}
\label{eq_Exiy}
E^\xi_y:=\{t\in\R\,:\, y+t\xi\in E\}.
\end{equation}
 
\vskip10pt\noindent
\subsection{Functions of bounded variation and sets of finite perimeter}
\label{subsec_bv}
If $\Om \subseteq \R^d$ is open, we say that $u \in BV(\Om;\R^m)$ if $u \in L^1(\Om;\R^m)$ and its derivative in the sense of distributions is a finite Radon measure on $\Om$, i.e., $Du \in \mathcal{M}_b(\Om;M^{d\times m})$. $BV(\Om;\R^m)$ is called the space of {\it functions of bounded variation} on $\Om$ with values in $\R^m$ and it 
  is a Banach space under the norm $\|u\|_{BV(\Om;\R^m)}:=\|u\|_{L^1(\Om;\R^m)}+\|Du\|_{\mathcal{M}_b(\Om;M^{d\times m})}$.  We call $|Du|(\Om):=\|Du\|_{\mathcal{M}_b(\Om;M^{d\times m})}$ the {\it total variation} of $u$. 
We refer the reader to \cite{AFP} for an exhaustive treatment of the space $BV$.
\par
We say that $u\in SBV(\Om;\R^m)$ if $u\in BV(\Om;\R^m)$ and its distributional derivative can be written in the form
$$
Du=\nabla u\,dx+(u^+-u^-)\otimes \nu_u \Ha^{d-1}\lfloor J_u,
$$
where $\nabla u\in L^1(\Om;M^{d\times m})$ denotes the approximate gradient of $u$, $J_u$ denotes the set of approximate jumps of $u$, $u^+$ and $u^-$ are the traces of $u$ on $J_u$, and $\nu_u(x)$ is the normal to $J_u$ at $x$. 
\par
Note that if $u \in SBV(\Om;\R^m)$, then the singular part of $Du$ is concentrated on $J_u$ which is a countably $\Ha^{d-1}$-rectifiable set: there exists a set $E$ with $\Ha^{d-1}(E)=0$ and a sequence $(M_i)_{i \in \N}$ of $C^1$-submanifolds of $\R^d$ such that $J_u \subseteq E \cup \bigcup_{i \in \N}M_i$.
\par
We will say that $E\subseteq\R^d$ with $|E|<+\infty$ has finite perimeter if $1_E\in BV(\R^d)$. The perimeter of $E$ is defined as
$$
Per(E)=|D1_E|(\R^d).
$$
It turns out that
$$
D1_E=\nu_E \Ha^{d-1}\lfloor \partial^*E,\qquad Per(E)=\Ha^{d-1}(\partial^*E),
$$
where $\partial^*E$ is called the {\it reduced boundary} of $E$, and $\nu_E$ is the associated inner approximate normal (see \cite[Section 3.5]{AFP}). We have that $\partial^*E\subseteq \partial E$, but the topological boundary can in in general be much larger than the reduced one. If $A\subseteq \R^d$ is open and bounded with $\Ha^{d-1}(A)<+\infty$, then $A$ has finite perimeter with $Per(A)\leq \Ha^{d-1}(\partial A)$.

\vskip10pt\noindent
\subsection{Functions of bounded deformation}
If $\Om \subseteq \R^d$ is open, we say that $u \in BD(\Om)$ if $u \in L^1(\Om;\R^d)$ and its symmetric gradient $Eu:=\frac{Du+(Du)^*}{2}$ in the sense of distributions is a finite Radon measure on $\Om$, i.e., $Eu \in \mathcal{M}_b(\Om;{\rm M}^{d\times d}_{sym})$. $BD(\Om)$ is called the space of {\it functions of bounded deformation} on $\Om$. We refer the reader to \cite{Tem-Str, Temam} for the main properties of the space $BD$.
\par
We will make use of a subspace of $BD(\Om)$ called the space of {\it special functions of bounded deformation} introduced in \cite{ACDM}. We say that $u\in SBD(\Om)$ if $u\in BD(\Om)$ and its symmetrized distributional derivative can be written in the form
$$
Eu=e(u)\,dx+(u^+-u^-)\odot \nu_u \Ha^{d-1}\lfloor J_u,
$$
where $e(u)\in L^1(\Om;{\rm M}^{d\times d}_{sym})$ denotes the approximate symmetrized gradient of $u$, $J_u$ denotes the set of approximate jumps of $u$, $u^+$ and $u^-$ are the traces of $u$ on $J_u$, and $\nu_u(x)$ is the normal to $J_u$ at $x$. As in the case of functions of bounded variation, $J_u$ is a $\Ha^{d-1}$-countably rectifiable set.
\par
We will use the following compactness and lower semicontinuity result proved in \cite{BCDM}.

 \begin{theorem}
\label{thm:sbd}
Let $\Om \subseteq \R^d$ be open, bounded and with a Lipschitz boundary, and let $(u_n)_{n\in\N}$ be a sequence in $SBD(\Om)$ such that
$$
\sup_n\left[|Eu_n|(\Om)+\|u_n\|_{L^1(\Om;\R^d)}+\|e(u_n)\|_{L^p(\Om;{\rm M}^{d\times d}_{sym})}+\Ha^{d-1}(J_{u_n})\right]<+\infty
$$
for some $p>1$. Then there exists $u\in SBD(\Om)$ and a subsequence $(u_{n_k})_{k\in \N}$ such that
$$
u_{n_k}\to u\qquad\text{strongly in }L^1(\Om;\R^d),
$$
$$
e(u_{n_k})\rightharpoonup e(u)\qquad\text{weakly in }L^p(\Om;{\rm M}^{d\times d}_{sym}),
$$
and
$$
\Ha^{d-1}(J_u)\leq \liminf_{k\to +\infty} \Ha^{d-1}(J_{u_{n_k}}).
$$
\end{theorem}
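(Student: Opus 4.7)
The plan is to follow the original Bellettini--Coscia--Dal Maso strategy, which reduces the SBD compactness to the (scalar) SBV compactness of Ambrosio through one-dimensional slicing. First, I would use the classical $BD$ compactness (Temam--Strang): the uniform bound on $|Eu_n|(\Omega)+\|u_n\|_{L^1}$ on a bounded Lipschitz domain yields, up to a subsequence, $u_n\to u$ strongly in $L^1(\Omega;\R^d)$ with $u\in BD(\Omega)$. Reflexivity of $L^p$ with $p>1$ and Banach--Alaoglu then give a further subsequence (not relabelled) along which $e(u_n)\rightharpoonup V$ weakly in $L^p(\Omega;{\rm M}^{d\times d}_{sym})$. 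What remains is to show $u\in SBD(\Omega)$, identify $V=e(u)$, and prove the lower semicontinuity of the jump set.

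I would then invoke the slicing characterisation of $SBD$: for each $\xi\in\mathbb{S}^{d-1}$ and $y\in\xi^\perp$, set $\hat u_n^{\xi,y}(t):=u_n(y+t\xi)\cdot \xi$, a function of one real variable. A $BD$ function $u$ belongs to $SBD(\Omega)$ if and only if, for every $\xi$ in a basis of $\R^d$ and for $\mathcal{H}^{d-1}$-a.e. $y\in\xi^\perp$, the slice $\hat u^{\xi,y}$ lies in $SBV_{\text{loc}}(\Omega^\xi_y)$; moreover, $e(u)\xi\cdot\xi$ is recovered from the absolutely continuous parts of the slices, and the jump integral $\int_{J_u}|\nu_u\cdot\xi|\,d\mathcal{H}^{d-1}$ equals $\int_{\xi^\perp}\mathcal{H}^0(J_{\hat u^{\xi,y}})\,d\mathcal{H}^{d-1}(y)$. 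Since $\sup_n\mathcal{H}^{d-1}(J_{u_n})+\|e(u_n)\|_{L^p}<+\infty$, Fubini (applied to the slicing formula for $u_n$) and Fatou yield that, for $\mathcal{H}^{d-1}$-a.e. $y$, the one-dimensional sequence $(\hat u_n^{\xi,y})_n$ has uniformly bounded number of jumps and uniformly bounded $L^p$ derivatives, and is bounded in $L^1$ along a full-measure set of $y$'s after extracting a subsequence using Fubini and the $L^1$ convergence $u_n\to u$. Applying Ambrosio's one-dimensional $SBV$ compactness and lower semicontinuity theorem slice by slice, I obtain that $\hat u^{\xi,y}\in SBV(\Omega^\xi_y)$ with $(\hat u^{\xi,y})'$ being the weak $L^p$ limit of $(\hat u_n^{\xi,y})'$, and
\[
\mathcal{H}^0(J_{\hat u^{\xi,y}})\leq\liminf_{n}\mathcal{H}^0(J_{\hat u_n^{\xi,y}}).
\]
A diagonal extraction over a countable dense set of directions $\xi$ together with the slicing characterisation places $u$ in $SBD(\Omega)$ and identifies $V=e(u)$.

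For the lower semicontinuity of $\mathcal{H}^{d-1}(J_u)$, I would integrate the pointwise slice inequality in $y$ to obtain, for each fixed $\xi$,
\[
\int_{J_u}|\nu_u\cdot\xi|\,d\mathcal{H}^{d-1}\leq\liminf_{n}\int_{J_{u_n}}|\nu_{u_n}\cdot\xi|\,d\mathcal{H}^{d-1},
\]
via Fatou applied through the slicing formulas. Then, choosing a finite family of directions $\xi_1,\dots,\xi_N$ whose convex hull contains a sphere (equivalently: $\max_i|\cdot\,\cdot\xi_i|\geq c>0$), summing the above over $i$ and using $\sum_i |\nu\cdot\xi_i|\geq c$ together with the uniform bound $\sup_n\mathcal{H}^{d-1}(J_{u_n})<+\infty$ yields
\[
\mathcal{H}^{d-1}(J_u)\leq \frac{1}{c}\liminf_{n}\sum_{i=1}^N\int_{J_{u_n}}|\nu_{u_n}\cdot\xi_i|\,d\mathcal{H}^{d-1}\leq C\liminf_{n}\mathcal{H}^{d-1}(J_{u_n});
\]
a sharper argument (using denser and denser families of directions, as in Bellettini--Coscia--Dal Maso) removes the constant $C$ and gives the sought inequality with constant $1$.

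The main obstacle is the passage from the one-dimensional slice-wise information, which is easily obtained from Ambrosio's SBV compactness, to the ambient $(d-1)$-dimensional statements on $J_u$ and $e(u)$. This requires the delicate slicing characterisation of $SBD$ functions (which itself is a nontrivial theorem from \cite{ACDM}) and a careful Besicovitch-type argument to upgrade the directional jump estimates $\int_{J_u}|\nu_u\cdot\xi|\,d\mathcal{H}^{d-1}$ to an estimate on $\mathcal{H}^{d-1}(J_u)$ itself with the optimal constant. The hypothesis $p>1$ enters precisely to secure weak $L^p$ compactness of $e(u_n)$ and, at the slice level, the weak convergence of $(\hat u_n^{\xi,y})'$ needed to identify the absolutely continuous part of $Eu$.
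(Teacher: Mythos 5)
The paper itself does not prove Theorem \ref{thm:sbd}: it is imported verbatim from Bellettini--Coscia--Dal Maso \cite{BCDM}, so there is no internal proof to compare against. Your sketch reconstructs exactly the strategy of that cited proof — $BD$ compactness for the strong $L^1$ limit, weak $L^p$ compactness of $e(u_n)$, one-dimensional slicing combined with Ambrosio's one-dimensional $SBV$ closure theorem, Fatou in the slicing variable $y$, and a directional-to-isotropic upgrade of the jump estimate — so in that sense you are on the same route as the source the paper relies on.

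As a standalone argument, however, three points are stated incorrectly or left genuinely open. First, the slicing characterisation of $SBD$ is misquoted: slices along a \emph{basis} of directions only control the entries $e(u)\xi\cdot\xi$ for those $\xi$ and only detect jumps whose amplitude has a nonzero $\xi$-component; the characterisation in \cite{ACDM} requires a family of directions whose tensors $\xi\otimes\xi$ span ${\rm M}^{d\times d}_{sym}$ (e.g.\ $e_i$ and $e_i+e_j$), together with the integrated bound $\int_{\xi^\perp}\Ha^0(J_{\hat u^\xi_y})\,d\Ha^{d-1}(y)<+\infty$, not merely $SBV_{\rm loc}$ membership of a.e.\ slice; the same enlargement of the direction set is needed to identify the weak limit $V$ with $e(u)$, since a basis only pins down the diagonal entries (and one cannot argue instead through the measure decomposition of $\lim Eu_n$, because the weak* limit of the jump parts need not be singular). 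Second, what slicing actually yields is $\int_{J^\xi_u}|\nu_u\cdot\xi|\,d\Ha^{d-1}\le\liminf_n\int_{J_{u_n}}|\nu_{u_n}\cdot\xi|\,d\Ha^{d-1}$ with $J^\xi_u=\{x\in J_u:(u^+-u^-)(x)\cdot\xi\neq0\}$; the inequality with $J_u$ on the left holds only for those $\xi$ with $\Ha^{d-1}(J_u\setminus J^\xi_u)=0$ (a.e.\ $\xi$ works, by Fubini on the sphere), and this genericity must be built into the choice of the finitely many directions $\xi_1,\dots,\xi_N$. Third, passing from the constant $C$ to the sharp constant $1$ is not achieved by taking ``denser and denser families'' alone: one localises the directional inequality to arbitrary open subsets, covers the rectifiable set $J_u$ up to an $\eps$-error by finitely many disjoint open sets on which $\nu_u$ is nearly parallel to a chosen (generic) direction, and uses superadditivity of the liminf over disjoint open sets. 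Since you defer precisely these two ingredients (the slicing characterisation and the sharp lower semicontinuity) to \cite{ACDM} and \cite{BCDM}, your proposal proves the theorem only modulo the results that constitute the substance of the cited proof; as a reconstruction of that proof the outline is right, but these are the places where the actual work sits.
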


We will need also some properties of the {\it sections} of $SBD$-functions. If $\Om\subseteq\R^d$ is open and $u\in SBD(\Om)$, let us consider the scalar function on $\Om^\xi_y$ given by
\begin{equation}\label{eq:uxi}
\hat u^\xi_y(t):=u(y+t\xi)\cdot \xi
\end{equation}
and the set
\begin{equation} \label{eq:Juxi}
J_u^\xi:=\{x\in J_u\,:\, (u^+(x)-u^-(x))\cdot \xi\not=0\}
\end{equation}
The following result holds true (see \cite{ACDM}).

 \begin{theorem}[\bf One dimensional sections of $SBD$-functions]
\label{thm:sbd-1s}
Let $\Om\subseteq \R^d$ be open, $\xi\in \R^d$ with $|\xi|=1$ and let $u\in SBD(\Om)$. Then for $\Ha^{d-1}$-a.e. $y\in \Om^\xi$ we have
$$
\hat u^\xi_y\in SBV(\Om^\xi_y)
$$
with
$$
(\hat u^\xi_y)'(t)=(e(u)\xi\cdot \xi)(y+t\xi)\qquad\text{for a.e. $t\in \Om^\xi_y$}
$$
and
$$
J_{\hat u^\xi_y}=(J^\xi_u)^\xi_y.
$$
\end{theorem}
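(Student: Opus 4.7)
The plan is to identify the correct scalar Radon measure associated with the direction $\xi$ and to slice it by disintegration along $\xi$. Since $u\in SBD(\Omega)$, the matrix-valued measure $Eu$ contracted with $\xi\otimes\xi$ is the scalar Radon measure
$$
\mu := (Eu):(\xi\otimes\xi) = (e(u)\xi\cdot\xi)\,dx + \bigl((u^+-u^-)\cdot\xi\bigr)(\nu_u\cdot\xi)\,\mathcal{H}^{d-1}\lfloor J_u,
$$
which has no Cantor part by the $SBD$ hypothesis. First I would verify that for $\mathcal{H}^{d-1}$-a.e.\ $y\in\Omega^\xi$, the one-dimensional section $\hat u^\xi_y$ belongs to $L^1(\Omega^\xi_y)$; this is immediate from Fubini's theorem applied to $u\cdot\xi\in L^1(\Omega;\R)$.

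The heart of the argument is the identification of the distributional derivative $D\hat u^\xi_y$ on $\Omega^\xi_y$ with the disintegration $\mu_y$ of $\mu$ along the fibers of $\pi_{\xi^\perp}$. To see this, I would test $\mu$ against a product of the form $\phi(y)\psi(t)$ with $\phi\in C_c^\infty(\xi^\perp)$, $\psi\in C_c^\infty(\R)$, and compare with the distributional identity
$$
\int_\Omega \partial_\xi(u\cdot\xi)\,\eta\,dx = -\int_\Omega (u\cdot\xi)\,\partial_\xi\eta\,dx\quad\text{in the sense of distributions},
$$
which holds because $\partial_\xi(u\cdot\xi) = (Eu)\xi\cdot\xi=\mu$ distributionally (the antisymmetric part of $Du$ contributes zero after contraction with $\xi\otimes\xi$). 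Fubini then yields that for $\mathcal{H}^{d-1}$-a.e.\ $y$ one has $D\hat u^\xi_y = \mu_y$ as Radon measures on $\Omega^\xi_y$.

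Next I would decompose $\mu_y$ into its absolutely continuous and singular parts. Disintegration preserves the Lebesgue decomposition for a.e.\ $y$: the absolutely continuous part of $\mu_y$ is $(e(u)\xi\cdot\xi)(y+t\xi)\,dt$, giving the stated formula for $(\hat u^\xi_y)'$. For the singular part, the rectifiability of $J_u$ and the area/co-area formula identify
$$
\mu_y^{\mathrm{sing}} = \sum_{t\in (J_u)^\xi_y}\bigl((u^+-u^-)(y+t\xi)\cdot\xi\bigr)\,\delta_t,
$$
where for $\mathcal{H}^{d-1}$-a.e.\ $y$ the set $\{t\in(J_u)^\xi_y:\nu_u(y+t\xi)\cdot\xi=0\}$ is $\mathcal{H}^0$-negligible. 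The surviving atoms are exactly those with $(u^+-u^-)\cdot\xi\neq 0$, i.e.\ $J_{\hat u^\xi_y}=(J_u^\xi)^\xi_y$ as claimed, and no Cantor part appears (since $Eu$ has none), giving $\hat u^\xi_y\in SBV(\Omega^\xi_y)$.

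The main obstacle is the careful handling of the jump part: one must show that the bad set where $J_u$ is tangent to the direction $\xi$ (i.e.\ $\nu_u\cdot\xi=0$) does not pollute $\mathcal{H}^{d-1}$-almost every slice. This is where the rectifiability of $J_u$ and the Besicovitch-type slicing of $\mathcal{H}^{d-1}\lfloor J_u$ through a countable family of $C^1$-hypersurfaces transverse to $\xi$ off a negligible set come into play; once this transversality is in hand, the area formula delivers the precise atomic structure of $\mu_y^{\mathrm{sing}}$ and closes the proof.
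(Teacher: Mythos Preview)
The paper does not prove this theorem: it is quoted from \cite{ACDM} (Ambrosio--Coscia--Dal Maso) as a known structural result, so there is no ``paper's own proof'' to compare against. Your outline is essentially the standard argument from that reference: contract $Eu$ with $\xi\otimes\xi$, use Fubini/disintegration to identify $D\hat u^\xi_y$ with the slice $\mu_y$, and read off the absolutely continuous and jump parts.

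Two places deserve more care if you want a self-contained proof rather than a sketch. First, the assertion that ``disintegration preserves the Lebesgue decomposition for a.e.\ $y$'' is not automatic for a general measure; here it works because you slice the absolutely continuous part and the jump part \emph{separately} (the former by Fubini, the latter by the coarea/area formula on the rectifiable set $J_u$) and then check that the slice of the jump part is purely atomic for a.e.\ $y$. Second, the equality $J_{\hat u^\xi_y}=(J_u^\xi)^\xi_y$ requires more than identifying the atoms of $\mu_y$: you must also verify that for $\mathcal{H}^{d-1}$-a.e.\ $y$ the one-sided limits of $\hat u^\xi_y$ at each $t\in (J_u)^\xi_y$ exist and equal $u^\pm(y+t\xi)\cdot\xi$, so that a point with $(u^+-u^-)\cdot\xi=0$ is genuinely a point of approximate continuity of $\hat u^\xi_y$ and not, say, a removable jump hiding a Cantor contribution. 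In \cite{ACDM} this is handled via the structure theorem for $BD$ slices together with the characterisation of $SBD$ by the vanishing of the Cantor part of each $\mu_y$; your sketch gestures at this but does not close it.
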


\section{Obstacles in Stokes fluids and drag minimization}
\label{sec_stokes}
In this section we explain the drag problem for an obstacle immersed in a stationary flow.

\subsection{The flow around the obstacle}
\label{subsec_flow}
Let $\Om\subset \R^d$ be an open bounded set with Lipschitz boundary, and let $V\in C^1(\R^d;\R^d)$ be a divergence free vector field. Given $E\Subset \Om$ open and with a Lipschitz boundary, let us consider the stationary flow for a viscous incompressible fluid around $E$ with boundary conditions on $\partial \Om$ given by $V$, and with Navier boundary conditions on $\partial E$. More precisely, if $u:\Om\setminus E\to \R^d$ is the velocity field, we require that the following items hold true.
\begin{itemize}
\item[(a)] {\it Incompressibility}: ${\rm div}\,u=0$ in $\Om\setminus E$.
\item[(b)] {\it Boundary conditions}: we have
$$
u=V\text{ on $\partial \Om$}\qquad\text{and}\qquad \text{the non-penetration condition } u\cdot \nu=0\text{ on $\partial E$},
$$
where $\nu$ denotes the exterior normal to $E$.
\item[(c)] {\it Equilibrium}: considering the stress 
\begin{equation}
\label{eq_const-sigma}
\sigma:=-p I_d+2\mu e(u),
\end{equation}
where $\mu>0$ is a viscosity parameter, $e(u)$ the symmetrized gradient of $u$ (also denoted by $D(u)$) and $p$ is the pressure, we require
\begin{equation}
\label{eq_div-sigma}
{\rm div}\,\sigma=0\qquad\text{in }\Om\setminus E.
\end{equation}
\item[(d)] {\it Navier conditions on the obstacle}: we have
$$
(\sigma\nu)_\tau=\beta u\qquad\text{on }\partial E,
$$
where $\beta>0$ is a friction parameter, and $(\sigma\nu)_\tau$ denotes the tangential component of force $\sigma\nu$.
\end{itemize}
The stationary flow has the following variational characterization: $u$ is the minimizer of the energy
\begin{equation}
\label{eq_es}
\E(u):=2\mu\int_{\Om\setminus E}|e(u)|^2\,dx+\beta\int_{\partial E}|u|^2\,d\Ha^{d-1}
\end{equation}
among the class of (sufficiently regular) admissible fields
\begin{equation}
\label{eq_def-vub}
\mathcal{V}^{\text{reg}}_{E,V}(\Om):=\{v\in H^1(\Om\setminus E;\R^d)\,:\, \text{$v$ satisfies points (a) and (b)}\},
\end{equation}
where $\Ha^{d-1}$ stands for the $(d-1)$-dimensional Hausdorff measures, which reduces to the area measure on sufficiently regular sets.
Indeed if $u$ is a minimizer, and $\varphi$ is an admissible variation, so that $\varphi=0$ on $\partial \Om$, we get 
\begin{align*}
0&=2\mu\int_{\Om\setminus E}e(u):e(\varphi)\,dx+\beta\int_{\partial E}u\cdot \varphi\,d\Ha^{d-1}\\
&=2\mu\int_{\Om\setminus E}e(u):\nabla \varphi\,dx+\beta\int_{\partial E}u\cdot \varphi\,d\Ha^{d-1}\\
&=-2\mu\int_{\Om\setminus E}{\rm div}\, e(u)\cdot \varphi\,dx+\int_{\partial E}[-2\mu e(u)\nu+\beta u]\cdot \varphi\,d\Ha^{d-1}
\end{align*}
In particular, choosing $\varphi$ with compact support in $\Om\setminus E$ we have
$$
2\mu {\rm div}\, e(u)=\nabla p
$$
for some pressure field $p$: as a consequence $\sigma:=-pI_d+2\mu e(u)$ satisfies \eqref{eq_div-sigma} of condition (c).
\par
Since the admissible functions  $\varphi$ are tangent to $\partial E$, the optimality condition reduces to
\begin{equation}
\label{eq_opt-bdry}
0=\int_{\partial E}[-2\mu e(u)\nu+\beta u]\cdot \varphi\,d\Ha^{d-1}=\int_{\partial E}[-\sigma\nu+\beta u]\cdot \varphi\,d\Ha^{d-1}.
\end{equation}
Notice that every tangential vector field $\eta$ on $\partial E$ can be extended to a divergence free vector field on $\Om\setminus E$ which vanishes on $\partial\Om$, hence it is the trace of an admissible variation $\varphi$: indeed any extension $W$ which vanishes on $\partial \Om$ has a divergence with zero mean, so that considering $W_1$ with ${\rm div}\,W_1={\rm div}\,W$ with $W_1=0$ on $\partial \Om$ and on $\partial E$ (whose existence is guaranteed, for example by \cite[Theorem IV.3.1]{Boyer})), the required extension is given by $W-W_1$. We conclude that the optimality condition \eqref{eq_opt-bdry} yields the Navier condition of point (b).

\subsection{The drag force}
\label{subsec_drag}
Assume now that the external vector field $V$ is equal to a constant $V_\infty\in \R^d\setminus\{0\}$, i.e. the obstacle $E$ is immersed in a uniform flow. The flow is perturbed near $E$, assuming the value $u$, and the obstacle experiences a force which has a component in the direction $V_\infty$ which is given by
$$
Drag(E):=\int_{\partial E}\sigma\nu\cdot \frac{V_\infty}{|V_\infty|}\,d\Ha^{d-1},
$$
which is called the {\it drag force} on $E$ in the direction of the flow.
\par
We claim that
\begin{equation}
\label{eq_drag}
Drag(E)= \frac{1}{|V_\infty|}  \E(u),
\end{equation}
where $\E(u)$ is the energy defined in \eqref{eq_es}. Using the facts that $\sigma$ is symmetric and with zero divergence (so that also the vector field $\sigma V_\infty$ is divergence free), and that $u=V_\infty$ on $\partial \Om$, we may write
 \begin{equation}\label{eq_drag1}
\begin{split}
\int_{\partial E}\sigma\nu\cdot V_\infty\,d\Ha^{d-1}=\int_{\partial E}\sigma V_\infty\cdot \nu\,d\Ha^{d-1}=\int_{\partial \Om}\sigma V_\infty\cdot \nu\,d\Ha^{d-1}=\int_{\partial \Om}\sigma u\cdot \nu\,d\Ha^{d-1}\\
=\int_{\Om\setminus E}{\rm div}\,(\sigma u)\,dx+\int_{\partial E}\sigma u\cdot \nu\,d\Ha^{d-1}=\int_{\Om\setminus E}\sigma:\nabla u\,dx+\int_{\partial E}\sigma \nu\cdot u\,d\Ha^{d-1}.
\end{split}
\end{equation}

Using again that $\sigma$ is symmetric and that $u$ is divergence free, together with the constitutive equation \eqref{eq_const-sigma}, we have
\begin{align*}
\int_{\Om\setminus E}\sigma:\nabla u\,dx&=\int_{\Om\setminus E}\sigma:e(u)\,dx=
\int_{\Om\setminus E}(-p\,I_d+2\mu e(u)): e(u)\,dx\\
&=\int_{\Om\setminus E}(-p\,{\rm div}\,u+2\mu |e(u|^2)\,dx=2\mu\int_{\Om\setminus E}|e(u)|^2\,dx,
\end{align*}
while in view of the Navier conditions on $\partial E$ and the fact that $u$ is tangent to the obstacle
$$
\int_{\partial E}\sigma \nu\cdot u\,d\Ha^{d-1}=\int_{\partial E}(\sigma \nu)_\tau\cdot u\,d\Ha^{d-1}=\beta\int_{\partial E}|u|^2\,d\Ha^{d-1}.
$$
Inserting into \eqref{eq_drag1}, we get that \eqref{eq_drag} follows.

\subsection{The optimization problem}
\label{subsec_opt-pb}
Let $c>0$ and let $f:(0,|\Om|)\to\R\cup\{+\infty\}$ be a lower semicontinuous functions that is not identically equal to $+\infty$. We are concerned with the following optimization problem:
$$
\min_E \left\{Drag(E)+c\Ha^{d-1}(\partial E)+f(|E|)\right\}.
$$
We are thus interested in finding the optimal shape of an obstacle which minimizes the drag force, under a penalization involving its perimeter and its volume.
\par
In view of the energetic characterization of the drag force established in Subsection \ref{subsec_drag}, we can formulate the problem as a minimization problem among the pairs $(E,u)$, where $u$ is a velocity field belonging to the family $\mathcal{V}^{\text{reg}}_{E,V_\infty}(\Om)$ defined in \eqref{eq_def-vub}:
$$
\min_{E,u\in \mathcal{V}^{\text{reg}}_{E,V_\infty}(\Om)} \left\{\frac{2\mu}{|V_\infty|}\int_{\Om\setminus E}|e(u)|^2\,dx+\frac{\beta}{|V_\infty|}\int_{\partial E}|u|^2\,d\Ha^{d-1}+c\Ha^{d-1}(\partial E)+ f(|E|)\right\}.
$$
\vskip15pt
Setting all the constants equal to 1, and  replacing $V_\infty$ by a given divergence free velocity vector field $V$ as in Subsection \ref{subsec_flow}, the drag minimization problem above is a particular case of the following shape optimization problem
\begin{equation}
\label{eq_pb-min}
\min_{E,u\in \mathcal{V}^{\text{reg}}_{E,V}(\Om)} \left\{\int_{\Om\setminus E}|e(u)|^2\,dx+\int_{\partial E}|u|^2\,d\Ha^{d-1}+\Ha^{d-1}(\partial E)+f(|E|)\right\}.
\end{equation}
If we want to apply the direct method of the calculus of variations to the problem, i.e., if we want to recover a minimizer by looking at minimizing sequences $(E_n,u_n)_{n\in\N}$, the following considerations are quite natural.
\begin{itemize}
\item[(a)] Since the problem involves the perimeter of $E$, the sequence $(E_n)_{n\in\N}$ is relatively compact in the family of {\it sets of finite perimeter} (see Section \ref{sec_prel}).
\item[(b)] Concerning the velocities, it turns out naturally that it is convenient to consider also {\it discontinuous} vector fields. Indeed if $u_n\to u$ in some sense, and $\partial E_n$ collapses in some parts generating a surface $\Gamma$ {\it outside} the limit set $E$, the limit velocity field $u$ can present,  in general, discontinuities across $\Gamma$. 
\begin{center}
\definecolor{uuuuuu}{rgb}{0.26666666666666666,0.26666666666666666,0.26666666666666666}
\begin{tikzpicture}[line cap=round,line join=round,>=triangle 45,x=1.0cm,y=1.0cm]
\clip(-4.1,1) rectangle (10,5);
\fill[line width=1.pt,fill=black,fill opacity=0.03999999910593033] (-4.,4.) -- (-4.,2.) -- (0.,2.) -- (0.,2.7) -- (2.,3.) -- (-0.02,3.3) -- (0.,4.) -- cycle;
\fill[line width=1.pt,fill=black,fill opacity=0.03999999910593033] (4.,4.) -- (4.,2.) -- (8.,2.) -- (8.,4.) -- cycle;
\draw [line width=1.pt] (-4.,4.)-- (-4.,2.);
\draw [line width=1.pt] (-4.,2.)-- (0.,2.);
\draw [line width=1.pt] (0.,2.)-- (0.,2.7);
\draw [line width=1.pt] (0.,2.7)-- (2.,3.);
\draw [line width=1.pt] (2.,3.)-- (-0.02,3.3);
\draw [line width=1.pt] (-0.02,3.3)-- (0.,4.);
\draw [line width=1.pt,] (0.,4.)-- (-4.,4.);
\draw [line width=1.pt] (4.,4.)-- (4.,2.);
\draw [line width=1.pt] (4.,2.)-- (8.,2.);
\draw [line width=1.pt] (8.,2.)-- (8.,4.);
\draw [line width=1.pt] (8.,4.)-- (4.,4.);
\draw [line width=1.pt] (8.,3.017863264544199)-- (10.035726529088404,3.017863264544199);
\draw (-2.3427142491303754,1.7938981576175637) node[anchor=north west] {$E_n$};
\draw (5.77353744486048,1.9562231914973804) node[anchor=north west] {$E$};
\draw (9,3) node[anchor=north west] {$\Gamma$};
\end{tikzpicture}
\end{center}
We thus expect an extra term in the surface integral related to the Navier conditions, which amounts at least to
$$
\int_{\Gamma\setminus \partial E}[|u^+|^2+|u^-|^2]\,d\Ha^{d-1},
$$
where $u^\pm$ are the two traces from both sides of $\Gamma$.
\end{itemize}
\par
The previous considerations yield to formulate a relaxed version of problem \eqref{eq_pb-min} in which $E$ varies among the family of sets of finite perimeter contained in $\Om$, while the family of associated admissible velocity fields $u$ is naturally contained in the space of {\it special functions of bounded deformation} $SBD(\Om)$ (see Section \ref{sec_prel}).

\par
In Section \ref{sec_ex-ob}, we will give a precise formulation of problem in this weak setting, which guarantees existence of optimal solutions, describing in particular how the boundary conditions on $\partial\Om$ and on the obstacle have to be rephrased in this context.
\par

\section{A relaxed formulation of the shape optimization problem and statements of the main results}
\label{sec_ex-ob}

Let $\Om\subseteq \R^d$ be open, bounded and with a Lipschitz boundary, and let $V\in C^1(\R^d;\R^d)$ be a divergence free vector field.
In order to deal conveniently with the boundary conditions, let us consider $\Om'\subseteq \R^d$ open and bounded such that $\Om\Subset \Om'$. 
\par
The following definition deals with the family of admissible configurations in the relaxed setting.

\begin{definition}[\bf The class $\as(V)$ of admissible obstacle-velocity configurations]
\label{def:adm-Eu}
We say  
that $(E,u)$ is an admissible configuration for the external velocity $V$, and we will write $(E,u)\in \as(V)$, if $E\subseteq \Om$ is a set of finite perimeter, while
$$
u\in SBD(\Om')\cap L^2(\Om';\R^d)
$$ 
is such that $u=0$ a.e. on $E$ and the following conditions are satisfied.
\begin{itemize}
\item[(a)] The flow is divergence free: ${\rm div}\, u=0 \text{ in the sense of distributions in $\Om'$}$.
\item[(b)] External boundary conditions: $u=V \text{ a.e. on }\Om'\setminus \Omb$.
\item[(c)] Non-penetration condition on the obstacle:
$$
u^{\pm}\cdot \nu=0 \text{ on }\partial^* E\cup J_u,
$$
where $\nu$ denotes the normal to the rectifiable set $\partial^*E\cup J_u$.
\end{itemize}
\end{definition}

\begin{remark}
\label{difference}
{\rm
The crucial difference between admissible velocities in the present framework and those of the family $\mathcal{V}_{E,V}^\text{reg}(\Om)$ introduced before (see \eqref{eq_def-vub}) is that they may have discontinuities outside of $E$.  Within the new setting, the global obstacle is given by 
$$
E\cup J_u
$$ 
i.e. it may contain $(d-1)$ dimensional parts.
\par
Given $(E,u)\in \as(V)$, concerning the traces of $u$ on $\partial^*E$, we will denote with $u^+$ the trace in the direction of the external normal $\nu_E$, so that $u^-=0$ $\Ha^{d-1}$-a.e. on $\partial^*E$. 
\par
Concerning the non-penetration constraint, notice that it suffices to require it only on $J_u$, since it is then automatically verified also on $\partial^*E$. Indeed for $\Ha^{d-1}$-a.e. $x\in \partial^*E\setminus J_u$, we have $u^-(x)=u^+(x)=0$ and the constraint is verified, while for $\Ha^{d-1}$-a.e. $x\in J_u\cap \partial^*E$ the two rectifiable sets $J_u$ and $\partial^*E$ share the same normal vector.
}
\end{remark}

\begin{remark}
\label{rem:sbd}
{\rm
The space $SBD(\Om')$ is naturally a subspace of $L^1(\Om';\R^d)$: we require for admissibility that $u\in L^2(\Om';\R^d)$ to ensure that the velocity field has finite kinetic energy.
It will turn out that velocities in $SBD(\Om')$ which are interesting for our problem (i.e., with finite energy) are automatically elements of $L^2(\Om';\R^d)$ (see Theorem \ref{thm:embedding}).
}
\end{remark}

\begin{remark}[\bf On the boundary condition]
\label{rem:bdry}
{\rm
If $(E,u)\in \as(V)$, then $u\in SBD(\Om')$ with $u=V$ a.e. on $\Om'\setminus \Omb$, so that
$$
J_u\cap \partial \Om=\{x\in \partial \Om\,:\, \gamma(u)(x)\not=V(x)\},
$$
where $\gamma(u)$ is the trace of $u$ on $\partial\Om$ coming from $\Om$ (i.e., the usual trace of $u$ seen as an element of $SBD(\Om)$).
We conclude that within the present framework, the boundary condition is somehow relaxed: a possible mismatch between $u$ and $V$ on $\partial\Om$ is admitted, but then the zone is counted as a jump part of the velocity field, and consequently as a part of the obstacle $\partial^*E\cup J_u$, and will carry a contribution for the energy (see \eqref{eq_JEu} below). Such a relaxation of the boundary condition is a feature which is common to several applications of functions of bounded variation to problems in continuum mechanics (see for example \cite{FL, DMFT} in connection to fracture mechanics or \cite{DMDEM} for problems in plasticity).
}
\end{remark}

\begin{remark}
{\rm
 Given $(E,u)\in \as(V)$,  the obstacle  $E\cup J_u$  may touch $\partial\Om$ only on those part where $V$ is tangent to $\Om$: this is due to the fact that on $(\partial^*E \cup J_u)\cap \partial\Om$, the two sets share $\Ha^{d-1}$-a.e. the same normal, and $u^+=V$  (if the orientation is suitably chosen). 
}
\end{remark}

\begin{remark}
\label{rem:nonempty}
{\rm
Let $E\Subset \Om$ be open and with a Lipschitz boundary. Then we can find $W\in H^1(\Om\setminus E;\R^d)$ such that $W=V$ on $\partial\Om$, $W=0$ on $\partial E$ and $\mbox{div}\, W=0$. Indeed if $\varphi \in C^\infty(\R^d)$ is such that $\varphi=1$ on a neighborhood of $\R^d\setminus \Om$ and $\varphi=0$ on a neighborhood of $E$, we can consider the vector field $V_1:=\varphi V$, whose divergence has zero mean on $\Om\setminus E$ (by Gauss theorem). Then we can find $V_2\in H^1_0(\Om\setminus E;\R^d)$ such that $\mbox{div}\,V=\mbox{div}\, V_1$ (see \cite[Theorem IV.3.1]{Boyer}), so that the field $W:=V_1-V_2$ is an admissible choice. In particular we get that $(E,W)\in \as(V)$, so that the class of admissible configurations is not empty.
}
\end{remark}

Let 
\begin{equation}
\label{eq:f}
\text{$f:[0,|\Om|]\to [0,+\infty]$ be lower semicontinuous, not identically equal to $+\infty$.}
\end{equation}
For every $(E,u)\in \as(V)$, let us set (normalizing to $1$ the constants involved in the drag force problem)
\begin{equation}
\label{eq_JEu}
\begin{split}
\J(E,u):=&\int_{ \Om'} |e(u)|^2\,dx+\int_{\partial^*E}|u^+|^2\,d\Ha^{d-1}+\int_{J_u\setminus \partial^*E}[|u^+|^2+|u^-|^2]\,d\Ha^{d-1}
\\
&+\Ha^{d-1}(\partial^*E)+2\Ha^{d-1}(J_u\setminus \partial^*E)+f(|E|).
\end{split}
\end{equation}

\begin{remark}
\label{rem:functional}
{\rm

Concerning the volume integral in $\J(E,u)$, the density $e(u)$ is equal to $e(V)$ a.e. on $\Om'\setminus \Omb$ and equal to $0$ a.e. on $E$: as a consequence we could replace it with an integral on $\Om\setminus E$ without affecting the minimization of $\J$. \par
Concerning the {\it Navier energy} and the surface penalization for $\partial^*E\cup J_u$, notice that it counts also for the possible mismatch at the boundary between $u$ and $V$ as pointed out in Remark \ref{rem:bdry}: the mismatch is thus ``penalized''
by the energy of the problem.
\par
The previous observations show that the larger domain $\Om'$ plays only an instrumental role for the problem, as it can be replaced by any open domain strictly containing $\Om$.

}
\end{remark}

 The first main result of the paper is the following

\begin{theorem}[\bf Existence of optimal obstacles]\label{main1}
Let $\Om\subseteq\R^d$ be a bounded open set with Lipschitz boundary, $V\in C^1(\R^d;\R^d)$ a divergence-free vector field, and $f$ a function satisfying \eqref{eq:f}. Let the family of admissible configurations $\as(V)$ be given by Definition \ref{def:adm-Eu} and let $\J$ be the functional defined in \eqref{eq_JEu}. Then the problem 
\begin{equation}\label{mainpb}
\min_{(E,u)\in\as(V)}\J(E,u)
\end{equation}
admits a solution.
\end{theorem}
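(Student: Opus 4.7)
The plan is to apply the direct method of the calculus of variations. Fix a minimizing sequence $(E_n,u_n) \in \as(V)$; the class is nonempty by Remark~\ref{rem:nonempty}, and we may assume $\J(E_n,u_n) \le C$. From the bound we get uniform control of $\|e(u_n)\|_{L^2(\Om')}$, $\Ha^{d-1}(\partial^*E_n)$ and $\Ha^{d-1}(J_{u_n})$. The external condition $u_n = V$ on $\Om'\setminus\overline{\Om}$ combined with an $SBD$ Poincar\'e/embedding estimate (the content of Theorem~\ref{thm:embedding} mentioned in Remark~\ref{rem:sbd}) provides a uniform $L^2$-bound on $u_n$; hence Theorem~\ref{thm:sbd}, together with the standard compactness for sets of finite perimeter, produces (up to extraction) a limit $(E,u)$ with $1_{E_n}\to 1_E$ in $L^1(\Om)$, $u_n\to u$ strongly in $L^1(\Om';\R^d)$, $e(u_n)\rightharpoonup e(u)$ weakly in $L^2(\Om';{\rm M}^{d\times d}_{sym})$, and $\Ha^{d-1}(J_u)\le \liminf_n \Ha^{d-1}(J_{u_n})$.

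Next I would show that $(E,u)\in\as(V)$. The divergence-free constraint and the datum $u=V$ on $\Om'\setminus\overline{\Om}$ survive the limit by distributional convergence and pointwise a.e.\ convergence along a subsequence; combined with $1_{E_n}\to 1_E$ a.e., the same subsequence gives $u=0$ a.e.\ on $E$; the inclusion $u\in L^2(\Om';\R^d)$ follows from lower semicontinuity of the $L^2$-norm. The delicate admissibility requirement is the non-penetration condition $u^\pm\cdot\nu = 0$ on $\partial^*E\cup J_u$ identified as point~(a) in the introduction: since $\int_{J_{u_n}}(|u_n^+\cdot\nu|+|u_n^-\cdot\nu|)\,d\Ha^{d-1}=0$ for every $n$, the lower semicontinuity statement of Theorem~\ref{thm:sci-normal} forces the analogous integral for $u$ to vanish, i.e.\ $u^\pm\cdot\nu=0$ on $J_u$, and the condition on $\partial^*E$ is then automatic as recorded in Remark~\ref{difference}.

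It remains to establish $\J(E,u)\le\liminf_n\J(E_n,u_n)$ term by term. The elastic integrand $|e(\cdot)|^2$ is convex, so weak $L^2$-lsc applies; $|E_n|\to|E|$ together with hypothesis \eqref{eq:f} handles the volume penalty; and the Navier surface densities $|u^\pm|^2$ are covered by Theorem~\ref{thm:lsc-phi} applied with $\phi(z)=|z|^2$, this being point~(b) of the introduction. For the weighted surface contribution $\Ha^{d-1}(\partial^*E)+2\Ha^{d-1}(J_u\setminus\partial^*E)$, the idea is to associate to $(E_n,u_n)$ the coupled field $w_n:=(u_n,M\,1_{E_n})$, viewed as an element of $SBD(\Om')\times SBV(\Om')$, whose jump set is exactly $\partial^*E_n\cup J_{u_n}$ (no cancellation occurs for $M$ large enough); the joint $SBD/SBV$ lower semicontinuity furnishes $\Ha^{d-1}(\partial^*E\cup J_u)\le\liminf \Ha^{d-1}(\partial^*E_n\cup J_{u_n})$, while the classical lsc of the perimeter gives $\Ha^{d-1}(\partial^*E)\le\liminf\Ha^{d-1}(\partial^*E_n)$. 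The rewriting
\[
\Ha^{d-1}(\partial^*E)+2\Ha^{d-1}(J_u\setminus\partial^*E)=\Ha^{d-1}(\partial^*E\cup J_u)+\Ha^{d-1}(J_u\setminus\partial^*E),
\]
together with the geometric fact that pieces of $\partial^*E_n$ collapsing onto $J_u\setminus\partial^*E$ arise as two-sided ``slab'' boundaries and are therefore already counted with multiplicity two in $\Ha^{d-1}(\partial^*E_n)$, delivers the correct weight on the limit jump set.

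The main obstacle in this scheme is clearly the joint control of the one-sided traces: the weak $SBD$-convergence supplied by Theorem~\ref{thm:sbd} is by itself far too poor to pass either the non-penetration constraint or the quadratic Navier density to the limit. This is precisely what the blow-up and slicing arguments hidden in Theorems~\ref{thm:sci-normal} and~\ref{thm:lsc-phi} are designed to overcome; granting those two results, the existence theorem reduces to the compactness and lower-semicontinuity bookkeeping outlined above.
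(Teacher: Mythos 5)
Most of your outline tracks the paper's proof (direct method, the $L^{2}$ bound via Theorem \ref{thm:embedding}, SBD compactness, closure of the non-penetration constraint through Theorem \ref{thm:sci-normal}, the Navier term through Theorem \ref{thm:lsc-phi} with $\phi(z)=|z|^{2}$, and the volume term). The genuine gap is in the weighted surface term. You must prove
\[
\Ha^{d-1}(\partial^*E)+2\Ha^{d-1}(J_u\setminus\partial^*E)\;\le\;\liminf_{n}\Bigl[\Ha^{d-1}(\partial^*E_n)+2\Ha^{d-1}(J_{u_n}\setminus\partial^*E_n)\Bigr],
\]
but the two facts you invoke, namely (i) $\Ha^{d-1}(\partial^*E\cup J_u)\le\liminf_n\Ha^{d-1}(\partial^*E_n\cup J_{u_n})$ and (ii) $\Ha^{d-1}(\partial^*E)\le\liminf_n\Ha^{d-1}(\partial^*E_n)$, do not combine to yield it. Your rewriting reduces the left-hand side to $\Ha^{d-1}(\partial^*E\cup J_u)+\Ha^{d-1}(J_u\setminus\partial^*E)$, so you still need to control $\Ha^{d-1}(J_u\setminus\partial^*E)$ by quantities along the sequence, and neither (i) nor (ii) provides this: adding them controls $2\Ha^{d-1}(\partial^*E)+\Ha^{d-1}(J_u\setminus\partial^*E)$, i.e.\ the extra multiplicity lands on the wrong set. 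The ``geometric fact'' that boundaries of collapsing slabs of $E_n$ are already counted twice in $\Ha^{d-1}(\partial^*E_n)$ is exactly the statement that requires proof (and one must also handle the cases where $J_u\setminus\partial^*E$ is generated by jumps of $u_n$, or by a mixture of both mechanisms); it cannot follow from set-level lower semicontinuity statements, which see the limit interface only with multiplicity one.

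The paper closes this step with a trace-level argument: choose $h\in\R^d$ so that the sets where $u^{\pm}$, $u^{\pm}_{n_k}$ equal $h$ on the relevant interfaces are $\Ha^{d-1}$-null, set $v^h:=u+h1_E$ and $v^h_{n_k}:=u_{n_k}+h1_{E_{n_k}}$ (so that $J_{v^h}=\partial^*E\cup J_u$ and $J_{v^h_{n_k}}=\partial^*E_{n_k}\cup J_{u_{n_k}}$ up to null sets), and apply Theorem \ref{thm:lsc-phi} with $\phi_h(s)=1_{\{s\neq h\}}$. Since this functional counts the two one-sided traces separately, $\int_{J_{v^h_{n_k}}}[\phi_h((v^h_{n_k})^+)+\phi_h((v^h_{n_k})^-)]\,d\Ha^{d-1}$ equals exactly $\Ha^{d-1}(\partial^*E_{n_k})+2\Ha^{d-1}(J_{u_{n_k}}\setminus\partial^*E_{n_k})$ (the trace from inside $E_{n_k}$ equals $h$ and is not counted), and likewise in the limit; the lower semicontinuity of this trace-dependent energy is precisely what encodes the multiplicity-two bookkeeping for collapsing slabs. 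You can repair your argument by replacing the pair $(u_n,M1_{E_n})$ with this shift trick; note also that the correct requirement on the shift is genericity (all but countably many $h$ work), not ``$M$ large enough'', since the traces of $u_n$ need not be bounded.
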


\begin{remark}
\label{rem:original}
{\rm

We recover the original drag minimization problem when $V$ is a constant nonzero vector $V_\infty$, and we restore properly in the functional the physical constants $\mu$ and $\beta$, together with the perimeter penalization constant $c$.

}
\end{remark}

 The second main result of the paper concerns the regularity of minimizers in the two dimensional setting.

 \begin{theorem}[\bf Regularity in dimension two]
 \label{main2}
Let $\Om\subseteq\R^2$ be a bounded open set with Lipschitz boundary, $V\in C^1(\R^2;\R^2)$ a divergence-free vector field, and $f:[0,|\Om|]\to [0,+\infty[$ a Lipschitz function. Let $(E,u) \in \as(V)$ be a solution to \eqref{mainpb} according to Theorem \ref{main1}. Then 
\[
\Ha^1\left(\Om\cap(\overline{J_u\cup\partial^*E}\setminus(J_u\cup\partial^*E))\right)=0,
\]
and $u\in C^\infty(\Om \setminus \overline{J_u\cup\partial^*E};\R^2)$.
\end{theorem}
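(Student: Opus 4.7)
The plan is to follow the De Giorgi--Carriero--Leaci scheme, in the vectorial adaptation of Conti--Focardi--Iurlano, applied to the joint singular set $\Sigma := J_u\cup \partial^*E$. The essential step is a uniform Ahlfors-type lower density bound: there exist $\theta,r_0>0$ such that
\[
\Ha^1(\Sigma\cap B_r(x))\geq \theta\,r \qquad \text{for every } x\in \overline{\Sigma}\cap \Om,\ 0<r\leq r_0.
\]
Once this is in hand, the essential closedness $\Ha^1(\Om\cap(\overline{\Sigma}\setminus\Sigma))=0$ follows by the classical covering argument using the finiteness of $\Ha^1(\Sigma)$, which is itself provided by the minimality of $(E,u)$ via Theorem \ref{main1}. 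The $C^\infty$ regularity is then automatic: on each connected component of $\Om\setminus\overline{\Sigma}$ the velocity $u$ is divergence free in $H^1_\loc$ and satisfies, by inner variations with compactly supported divergence-free test fields (here the Lipschitz assumption on $f$ enters, producing only a bounded Lagrange multiplier from the volume term), a weak Stokes system $-\Delta u+\nabla p=g$ with smooth data; classical elliptic regularity for the Stokes system then gives $u\in C^\infty$ on every compact subset of $\Om\setminus\overline{\Sigma}$.

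The density bound is established by a blow-up/compactness contradiction. Suppose there exists a sequence $x_n\in \overline{\Sigma}$, $r_n\downarrow 0$, with $\varepsilon_n:=\Ha^1(\Sigma\cap B_{r_n}(x_n))/r_n\to 0$. After subtracting a suitable infinitesimal rigid motion $a_n$ and rescaling,
\[
\hat u_n(y) \;:=\; \lambda_n^{-1}\bigl(u(x_n+r_n y)-a_n(y)\bigr),\qquad y\in B_1,
\]
produces a sequence in $SBD(B_1)$ with $\|e(\hat u_n)\|_{L^2(B_1)}=1$ and with jump set and reduced boundary of vanishing $\Ha^1$-measure. The two-dimensional strengthening of the Korn--SBD approximation of \cite{CCI_19} then yields $H^1$ fields $v_n\in H^1(B_{1/2};\R^2)$ that coincide with $\hat u_n$ outside an exceptional set $\omega_n$ of vanishing Lebesgue measure and are controlled in $H^1$. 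Solving Bogovskii's problem in $B_{1/2}$ with datum $-{\rm div}\,v_n$ (whose $L^2$ norm is $o(1)$, since $\hat u_n$ is divergence free off $\omega_n$) corrects $v_n$ into a divergence-free field without loss of $H^1$ bounds. Extracting a weak $H^1$ limit $u_\infty$, which is a divergence-free minimizer of $\int|e(\cdot)|^2$ against its own boundary trace, and invoking the Caccioppoli/Campanato decay for the Stokes system,
\[
\int_{B_\tau}|e(u_\infty)|^2\,dx \;\leq\; C\,\tau^{2}\qquad (0<\tau\leq 1/4),
\]
one obtains, back at scale $r_n$, a competitor for $(E,u)$ that erases $\Sigma\cap B_{r_n/2}(x_n)$ and glues $\lambda_n u_\infty(\cdot/r_n)+a_n$, contradicting minimality once $n$ is large.

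The main obstacle lies precisely in the compatibility of the approximation step with the two extra constraints that are absent in the Griffith setting: incompressibility and non-penetration across the (vanishing) jump set. In dimension $d\geq 3$ the approximation of \cite{CCI_19} creates new jump interfaces on which $\hat u_n^\pm\cdot \nu$ is not a priori zero, and the construction of \cite{CFI_19} gives $H^1$ control too weak to support a Bogovskii correction preserving divergence-freeness in the limit. The point exploited here is that in $d=2$ the construction of \cite{CCI_19} can be arranged so that $\omega_n$ has small measure and its boundary does not generate new interfaces crossing the support of $\hat u_n$: the non-penetration constraint is then transported trivially, while the $H^1$ control is strong enough for Bogovskii. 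This is the only dimension-dependent ingredient of the argument, which is why the regularity theorem is confined to two dimensions.
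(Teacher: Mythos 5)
Your overall architecture is the one the paper uses (De Giorgi--Carriero--Leaci scheme, the two-dimensional strengthening of the $SBD$ approximation of \cite{CCI_19}, a Bogovskii correction to restore incompressibility, Stokes decay, then essential closedness and elliptic regularity), but the central step is mis-stated in a way that does not close. You claim that from a point $x_n\in\overline{\Sigma}$ with $\Ha^1(\Sigma\cap B_{r_n}(x_n))=\eps_n r_n$, $\eps_n\to0$, the blow-up limit and its Campanato decay produce ``a competitor that erases $\Sigma\cap B_{r_n/2}(x_n)$ \dots contradicting minimality once $n$ is large.'' There is no such direct contradiction: replacing $u$ by a divergence-free Sobolev field in the ball saves only the (already tiny) surface energy $o(r_n)$ but may increase the bulk term by a quantity of the same order as $\int_{B_{r_n}}|e(u)|^2$, and minimality merely says the competitor is not better --- a statement perfectly consistent with $x_n\in\overline\Sigma$. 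What the blow-up/compactness argument actually proves is a \emph{decay estimate} for the localized Griffith energy ($G(Q_{\tau r})\le \tau^{3/2}G(Q_r)$ when the jump density is below a threshold), which must then be \emph{iterated over scales} to show that the full energy density, hence the upper $\Ha^1$-density of $\Sigma$, vanishes on a half-size ball; the contradiction is then with membership in the normalized set $\Sigma^+$ of points of positive upper density (and essential closedness follows from $\overline{\Sigma^+}\subseteq \Sigma\cup N$ with $N$ null), not with minimality. This is precisely the content of the paper's decay estimate, its iteration lemma and the density proposition, and none of it can be bypassed the way your sketch suggests.

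A second gap is the reduction of the drag functional to a Griffith-type quasi-minimality, which you do not address. Competitors for $(E,u)$ cannot simply ``erase $\Sigma\cap B_{r/2}$'': since $u=0$ a.e.\ on $E$ and $\partial^*E\subseteq\Sigma$, removing boundary inside the square forces a modification of the set $E$ itself, which changes the volume term (this is where the Lipschitz assumption on $f$ enters, producing the $\Lambda r^2$ error in the quasi-minimality inequality, not a ``Lagrange multiplier'' for inner variations) and, worse, may create new boundary on $\partial Q_{x,r}$. The paper handles this by choosing squares whose boundary meets $J$ in finitely many points and carries small linear density of $J$ nearby, and by proving a dichotomy: either $\partial Q_{x,r}$ lies $\Ha^1$-a.e.\ in $E^{(1)}$ (then $u\equiv0$ in the square and there is nothing to prove) or in $E^{(0)}$ (then $E\setminus Q_{x,r}$ has reduced boundary $\partial^*E\setminus Q_{x,r}$ and the Sobolev competitor is admissible). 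Relatedly, the smoothing step must produce a field that matches the \emph{trace} of $u$ on $\partial Q_r$ exactly and keeps the non-penetration condition for the pair $(J\setminus Q_r,v)$ --- agreement off a set of vanishing Lebesgue measure, as in your sketch, is not enough to glue a competitor or to run Bogovskii with the correct zero-mean condition. These admissibility issues (divergence constraint, non-penetration, $u=0$ on $E$, trace matching) are exactly the reason the paper introduces the class $\mathcal V$ and the Smoothing Lemma, and your proposal needs them spelled out for the argument to be complete.
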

 Theorem \ref{main1} will be proved in Section \ref{sec_proof-main}, on the basis of some technical results established in \ref{sec_sbd-tech}. The proof of Theorem \ref{main2} will be addressed in Section \ref{sec-reg}.

\section{Some technical results in SBD}
\label{sec_sbd-tech}
In this section we collect some technical properties concerning the space SBD that will be fundamental in the proof of Theorem \ref{main1}. In particular in Theorem \ref{thm:embedding} we will prove that admissible velocity vector fields enjoy higher summability properties (indeed they belong to $L^{\frac{2d}{d-1}}$). In 
Theorem \ref{thm:closure} we will prove that velocity fields $u$ with $u^\pm$ tangent to the discontinuity set $J_u$ form a closed set under the natural convergence of minimizing sequences for the main optimization problem. Finally in Theorem \ref{thm:lsc-phi} we will prove a lower semicontinuity result for surface energies depending on the traces, which entails in particular the lower semicontinuity of the term associated to the Navier conditions.

\subsection{An immersion result}
The following embedding result holds true.

 \begin{theorem}
\label{thm:embedding}
Let $\Om\subseteq \R^d$ be a bounded open set, and let $u\in SBD(\R^d)$ be supported in $\Om$ such that
$$
\E(u):=\int_\Om |e(u)|^2\,dx+\int_{J_u}\left[|u^+|^2+|u^-|^2\right]\,d\Ha^{d-1}<+\infty.
$$
Then $u\in L^{\frac{2d}{d-1}}(\Om)$ with
$$
\|u\|_{\frac{2d}{d-1}}\leq C \sqrt{\E(u)},
$$
where $C$ depends on $d$ and $\text{diam}(\Om)$ only.
\end{theorem}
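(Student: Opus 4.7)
The plan is to establish the embedding by a slicing argument that reduces the problem to a one-dimensional $L^\infty$-estimate for compactly supported $SBV$-functions, and then combine the slice estimates through a Loomis--Whitney/Gagliardo--Nirenberg-type inequality.

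\textbf{Step 1 (Slicing).} I would apply Theorem~\ref{thm:sbd-1s} in each direction $\xi \in S^{d-1}$: for $\Ha^{d-1}$-a.e.\ $y \in \xi^\perp$, the slice $\hat u^\xi_y(t) := u(y+t\xi)\cdot\xi$ belongs to $SBV(\R)$, is compactly supported in the interval $\Om^\xi_y$ of length at most $D := \operatorname{diam}(\Om)$, with derivative $(\xi \cdot e(u)\xi)(y+t\xi)$ and jump set $(J_u^\xi)_y^\xi$ whose traces are $(u^\pm \cdot \xi)(y+s\xi)$.

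\textbf{Step 2 (1D pointwise estimate).} For $v \in SBV(\R)$ compactly supported in an interval of length at most $D$, I would establish
\[
\|v\|_{L^\infty(\R)}^2 \le 2 \sum_{s \in J_v} \bigl(|v^+(s)|^2 + |v^-(s)|^2\bigr) + 2D \int |v'(t)|^2 \, dt.
\]
The argument: for any approximate continuity point $t$ of $v$, let $s$ be the nearest jump to the left of $t$ (take $s$ to be the left endpoint of the support, with $v^+(s):=0$, if no such jump exists). Then $v$ is absolutely continuous on $[s,t]$ with $v(t) = v^+(s) + \int_s^t v'$, and Cauchy--Schwarz yields the bound.

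\textbf{Step 3 (Integration over hyperplanes).} Integrating the 1D bound over $y \in \xi^\perp$ and invoking the area formula
\[
\int_{\xi^\perp} \sum_{s \in (J_u^\xi)_y^\xi} \phi(y+s\xi) \, dy = \int_{J_u^\xi} \phi \, |\nu_u \cdot \xi| \, d\Ha^{d-1},
\]
I obtain, for every direction $\xi$,
\[
\int_{\xi^\perp} \|\hat u^\xi_y\|_{L^\infty_t}^2 \, dy \le C(D) \, \E(u).
\]

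\textbf{Step 4 (Combining via Loomis--Whitney).} Specialising to $\xi = e_i$ and setting $h_i(\hat x_i) := \|u_i(\hat x_i, \cdot)\|_{L^\infty}^2$, I have $\|h_i\|_{L^1(\R^{d-1})} \le C \E(u)$ together with the pointwise inequality $|u_i(x)|^2 \le h_i(\hat x_i)$. To reach the $L^{2d/(d-1)}$-bound, one aims at a pointwise product estimate of the form
\[
|u(x)|^{2d/(d-1)} \le C(d) \prod_{i=1}^d h_i(\hat x_i)^{1/(d-1)},
\]
from which the classical Loomis--Whitney inequality $\int_{\R^d} \prod_i f_i(\hat x_i)^{1/(d-1)} dx \le \prod_i \|f_i\|_{L^1(\R^{d-1})}^{1/(d-1)}$ directly yields $\|u\|_{L^{2d/(d-1)}}^2 \le C(d,D) \E(u)$.

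\textbf{The main obstacle} lies in Step 4. Theorem~\ref{thm:sbd-1s} provides control of $u \cdot \xi$ along the direction $\xi$ only, so slicing in the $d$ coordinate directions gives estimates on the individual components $|u_i|^2$ (rather than on $|u|^2 = \sum_j |u_j|^2$) along the matching direction $e_i$. Unlike the scalar $BV$ setting, where $|v(x)| \le g_i(\hat x_i)$ holds in all $d$ directions automatically, here one must recover the cross terms $u_i u_j$ separately, for example by additional slicings along the diagonal directions $(e_i \pm e_j)/\sqrt 2$ (which, by polarization, give control on $u_i \pm u_j$ and hence on the products $u_i u_j$) or, alternatively, by a chain-rule argument showing that $|u|^2$ enjoys enough $BV$-type regularity to apply the classical $L^{d/(d-1)}$-Sobolev embedding, followed by an absorption argument to close the bound.
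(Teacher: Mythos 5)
Your Steps 1--3 are correct and coincide with the paper's own slicing strategy (the paper gets the one--dimensional bound by estimating the total variation of $(\hat u^\xi_y)^2$ and absorbing $\tfrac12\|\hat u^\xi_y\|_\infty^2$, which is the same content as your nearest-jump argument). The genuine gap is exactly where you flag it: Step 4 is not an argument but a list of candidate fixes, and the point you leave open is the crux of the theorem, namely that slicing in direction $\xi$ only controls the scalar component $u\cdot\xi$ along $\xi$. Moreover, neither of your proposed patches works as stated. The chain-rule route fails because putting $|u|^2$ in $BV$ would require control of the off-diagonal derivatives $\partial_i(u_j^2)=2u_j\partial_i u_j$, which are not controlled by $e(u)$ -- this is precisely the difference between $BD$ and $BV$. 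The diagonal-slicing route controls $u\cdot\eta$ with $\eta=(e_i\pm e_j)/\sqrt2$, but only along lines parallel to $\eta$: the resulting majorant is a function constant in the direction $\eta$, not in a coordinate direction, so it cannot be fed into the classical Loomis--Whitney inequality with coordinate projections, and the claimed pointwise bound $|u(x)|^{2d/(d-1)}\leq C\prod_i h_i(\hat x_i)^{1/(d-1)}$ is never obtained (nor is it available in general).

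The paper resolves the vectorial difficulty by following Temam's proof of the $BD\hookrightarrow L^{d/(d-1)}$ embedding, applied to squares of components. Fix the diagonal direction $\xi=\tfrac{1}{\sqrt d}(1,\dots,1)$. Slicing along $\xi$ gives $|u\cdot\xi|^2\leq C g_\xi$ a.e., where $g_\xi$ is constant along $\xi$ and $\int_{\xi^\perp}g_\xi\,d\Ha^{d-1}\leq C\,\E(u)$ (your Steps 2--3). Then, writing $\xi=\tfrac{1}{\sqrt d}e_k+\sqrt{\tfrac{d-1}{d}}\,h_k$ for $k=1,\dots,d-1$ and using $|u\cdot\xi|^2\lesssim |u\cdot e_k|^2+|u\cdot h_k|^2$, the \emph{same} scalar quantity $|u\cdot\xi|^2$ is also bounded by $C(g_{e_k}+g_{h_k})$, with $g_{e_k}$, $g_{h_k}$ obtained by slicing along $e_k$ and $h_k$. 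This produces $d$ bounds of one fixed scalar function by integrable functions constant along $d$ linearly independent (non-coordinate) directions, and the generalized Gagliardo/Loomis--Whitney lemma of Temam (Chapter II, Theorem 1.2), which unlike the classical coordinate version accommodates sums of functions constant along arbitrary directions, yields $\|(u\cdot\xi)^2\|_{d/(d-1)}\leq C\,\E(u)$. Since the constant does not depend on the chosen orthonormal frame, the estimate holds for every unit vector $\xi$, and summing over a basis gives $\|u\|_{2d/(d-1)}\leq C\sqrt{\E(u)}$. If you wish to keep your outline, the missing ingredients are precisely this decomposition trick for a single component and the non-coordinate version of the Loomis--Whitney lemma.
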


\begin{proof}
It suffices to follow the strategy of the proof of the classical embedding of $BD$ into $L^{d/d-1}$ explained in \cite{Temam}, but concentrating on the square of the components.
\par
Let us consider the unit vector
$$
\xi:=\frac{1}{\sqrt{d}}(1,1,\dots,1)\in \R^d.
$$
Employing the characterization by sections recalled in Section \ref{sec_prel}, for $\Ha^{d-1}$-a.e. $y\in \xi^\perp$ we have
$$
\hat u^\xi_y \in SBV(\Om^\xi_y)
$$
with
$$
\int_{\Om^\xi_y} |(\hat u^\xi_y)'|^2\,dt+\sum_{t\in J_{\hat u^\xi_y}}\left[ |(\hat u^\xi_y)^+(t)|^2+|(\hat u^\xi_y)^-(t)|^2\right]<+\infty.
$$
Then we can write for a.e. $t\in \R$
 \begin{equation}\label{eq_gxi}
\begin{split}
\Vert\hat u^\xi_y\Vert_{L^\infty(\Om_y^\xi)}^2&\leq \left|D(\hat u^\xi_y)^2\right|(\Om^\xi_y)=\int_{\Om_y^\xi}2|\hat{u}_y^\xi (\hat{u}_y^\xi)'|dt+\sum_{t\in J_{\hat{u}_y^\xi}}\left||(\hat{u}_y^{\xi})^+(t)|^2-|(\hat{u}_y^{\xi})^-(t)|^2\right|\\
&\leq \frac{1}{2}\Vert \hat{u}_y^\xi\Vert_{L^\infty(\Om_y^\xi)}^2+2|\Om_y^\xi|\int_{\Om_y^\xi}\left|(\hat{u}_y^\xi)'\right|^2dt+\sum_{t\in J_{\hat{u}_y^\xi}}\left(\left|(\hat{u}_y^{\xi})^+(t)\right|^2+\left|(\hat{u}_y^{\xi})^-(t)\right|^2\right),
\end{split}
\end{equation}
Let us set
$$
g_\xi(x):=\int_{\Om^\xi_y}|(\hat u^\xi_y)'|^2\,dt+\sum_{t\in J_{\hat u^\xi_y}}\left[ |(\hat u^\xi_y)^+(t)|^2+|(\hat u^\xi_y)^-(t)|^2\right],
$$
where $y:=\pi_{\xi^\perp}(x)$, i.e., the projection of $x$ on the hyperplane $\xi^\perp$. $g_\xi(x)$ only depends on the projection of $x$ on $\xi^\bot$ and
\begin{align*}
\int_{\xi^\bot}g_\xi d\Ha^{d-1}&= \int_\Om |e(u)\xi\cdot \xi|^2\,dx+\int_{J_u}\left[|u^+|^2+|u^-|^2\right]|\xi\cdot \nu|\,d\Ha^{d-1}\\
&\leq C\left[\int_\Om |e(u)|^2\,dx+\int_{J_u}\left[|u^+|^2+|u^-|^2\right]\,d\Ha^{d-1}\right]
\end{align*}
 
where $C$ depends only on $d$. Thanks to \eqref{eq_gxi} we have
\begin{equation}
\label{eq_ineq-2}
|\xi\cdot u|^2\leq C g_\xi\qquad\text{a.e. on }\Om,
\end{equation}
where $C$ depends on the diameter of $\Om$, and from now on all the constants $C$ that appear depend on $n,\text{diam}(\Om)$. For every $k=1,\dots,d-1$, we can write
$$
\xi=\frac{1}{\sqrt{d}}e_k+\sqrt{\frac{d-1}{d}}h_k,
$$
where $e_k$ is the $k$-th vector of the canonical base, and $h_k$ is the unit vector in the direction $\sqrt{d}\xi-e_k$. Reasoning as above on the decomposition
$$
\xi\cdot u=\sqrt{\frac{d-1}{d}}h_k\cdot u+\frac{1}{\sqrt{d}}e_k\cdot u
$$
we obtain a similar estimate
\begin{equation}
\label{eq_ineq-2bis}
|\xi \cdot u|^2\leq C\left( g_{h_k}+g_{e_k}\right),
\end{equation}
Multiplying inequality \eqref{eq_ineq-2} with inequalities \eqref{eq_ineq-2bis} for $k=1,\dots,d-1$, we obtain reasoning as in \cite[Chapter II, Theorem 1.2]{Temam}
$$
\|(\xi\cdot u)^2\|_{\frac{d}{d-1}}\leq C\left[ \int_\Om |e(u)|^2\,dx+\int_{J_u}\left[|u^+|^2+|u^-|^2\right]\,d\Ha^{d-1}\right].
$$
{Since this estimate does not depend on the particular choice of the
  basis and hence holds for any $\xi$ with norm one, the theorem is proved.} 
\end{proof}

\subsection{Closure of the non-penetration constraint} 

In the context of equi-Lipschitz boundaries, the preservation of the non-penetration property for a sequence of Sobolev functions converging weakly, comes rather directly via the divergence theorem (we refer the reader, for instance, to \cite{BFN10}). However, in the case of collapsing boundaries, so that the limit function lives on both sides of a surface and in absence of any smoothness of the limit set, this technique does not work. The proof of the non-penetration preservation requires different technical arguments that we handle in the SBD context.

Let us start with the following lower semicontinuity result.

 \begin{theorem}
\label{thm:sci-normal}
Let $\Om\subseteq \R^d$ be a bounded open set, and let $(u_n)_{n\in\N}$ be a sequence in $SBD(\Om)$ such that
$$
\sup_n \left[\int_\Om |e(u_n)|^2\,dx+\Ha^{d-1}(J_{u_n})\right]<+\infty
$$
with
$$
u_n\to u\qquad\text{in measure }
$$
for some $u\in SBD(\Om)$. Then
$$
\int_{J_u}\left[ |u^+\cdot \nu_u|+|u^-\cdot \nu_u|\right]\,d\Ha^{d-1}\leq \liminf_{n\to+\infty}
\int_{J_{u_n}}\left[ |u^+_n\cdot \nu_{u_n}|+|u^-\cdot \nu_{u_n}|\right]\,d\Ha^{d-1}.
$$
\end{theorem}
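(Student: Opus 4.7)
The plan is to recognise the surface density
\[
g(a,b,\nu) := |a\cdot \nu| + |b\cdot\nu|, \qquad a,b \in \R^d,\ \nu \in S^{d-1},
\]
as falling within the scope of the $SBD$ lower semicontinuity theorem of Friedrich, Perugini and Solombrino \cite{FPS}, and then to apply it after a standard compactness extraction.

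First I would extract a subsequence realising the $\liminf$. Using the uniform $L^2$-bound on $e(u_n)$, the uniform bound on $\Ha^{d-1}(J_{u_n})$, and convergence in measure (together with a localisation/truncation argument to secure the $L^1$-boundedness needed to invoke Theorem \ref{thm:sbd}), I would obtain $e(u_n) \weak e(u)$ weakly in $L^2(\Om;{\rm M}^{d\times d}_{sym})$ and $\Ha^{d-1}(J_u) \le \liminf \Ha^{d-1}(J_{u_n})$ along the extracted subsequence.

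Next, I would verify that $g$ satisfies the structural hypotheses of the FPS theorem: continuity, non-negativity, positive $1$-homogeneity in $(a,b)$, the orientation-compatibility $g(a,b,\nu) = g(b,a,-\nu)$, the linear bound $g(a,b,\nu) \le |a|+|b|$, and, crucially, the appropriate joint $SBD$-subadditivity/ellipticity condition. This last condition is the technical heart of the argument; it is most naturally approached by slicing in the direction $\nu$. By Theorem \ref{thm:sbd-1s}, for $\Ha^{d-1}$-a.e.\ line parallel to $\nu$ the scalar section $t\mapsto u_n(y+t\nu)\cdot\nu$ lies in $SBV$ with square-integrable derivative $e(u_n)\nu\cdot\nu$ and jump set coming from $J_{u_n}^\nu$; at each jump the scalar quantity $|w^-|+|w^+|$ reproduces exactly the contribution $|u^+\!\cdot\!\nu| + |u^-\!\cdot\!\nu|$ that appears in $g$ when $\nu=\nu_u$. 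Combining one-dimensional $SBV$-lower semicontinuity of $\sum_{t\in J_w}(|w^-|+|w^+|)$ along $\Ha^{d-1}$-a.e.\ such line with Fubini produces the subadditivity required by \cite{FPS}. Once admissibility of $g$ is established, the theorem of \cite{FPS} applied to $(u_n)$ yields directly
\[
\int_{J_u} g(u^+,u^-,\nu_u)\,d\Ha^{d-1} \le \liminf_{n\to+\infty}\int_{J_{u_n}} g(u_n^+,u_n^-,\nu_{u_n})\,d\Ha^{d-1},
\]
which is the claimed inequality.

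The hard part is precisely the slicing verification of ellipticity: the density depends separately on both traces $u^\pm$ rather than only on the jump $u^+ - u^-$, so the Griffith-type arguments of earlier $SBD$ lower semicontinuity literature do not apply verbatim; admissible $SBD$ competitors on each Caccioppoli piece are infinitesimal rigid motions, not constants, so the vectorial degrees of freedom must be controlled; and the factor $|\xi\cdot\nu_u|$ that appears when slicing in a \emph{fixed} direction $\xi$ rather than the pointwise normal $\nu_u$ obstructs a direct pointwise reconstruction. This is exactly the reason the authors appeal to the refined $SBD$ lower semicontinuity result of \cite{FPS}, which handles densities depending on both traces and on $\nu_u$ without requiring an explicit optimisation in $\xi$ that would fail in dimension $d\ge 3$.
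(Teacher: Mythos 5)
Your overall plan---reduce the claim to the $SBD$ lower semicontinuity theorem of Friedrich--Perugini--Solombrino---is the same tool the paper uses, but there is a genuine gap at the decisive point: you never establish that the density $g(a,b,\nu)=|a\cdot\nu|+|b\cdot\nu|$ is admissible for that theorem. The hypothesis of \cite[Theorem 5.1]{FPS} is not a subadditivity/ellipticity condition that one checks by slicing; it is membership in the class of \emph{symmetric jointly convex functions} (\cite[Definition 3.1]{FPS}), i.e.\ functions expressible as suprema of $(G(i)-G(j))\cdot\nu$ over bounded conservative vector fields $G$. Your proposed verification by one-dimensional sections runs into exactly the obstruction you yourself acknowledge: slicing is only available in fixed directions $\xi$, which produces the weight $|\xi\cdot\nu_u|$ that cannot be optimised away in $d\geq 3$ (this is precisely the failure mode discussed in the paper's introduction), and declaring that ``this is why one appeals to FPS'' leaves the admissibility of $g$ unproved. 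As it stands the argument is circular at its key step, since it is not at all clear (and the paper never claims) that $g$ itself lies in the FPS class.

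What the paper actually does is approximate rather than verify: for each $\eps>0$ it sets $f_\eps(i,j,\nu):=\sup_{h,k}\,(G_{h,k}(i)-G_{h,k}(j))\cdot\nu$ with $G_{h,k}(x)=\varphi_h\bigl(\tfrac12|x-x_k|^2\bigr)(x-x_k)$, where the $\varphi_h$ are dense among compactly supported continuous functions with zero mean and sup-norm at most one, and the $x_k$ are dense in $B_\eps(0)$. Each $G_{h,k}$ is a bounded conservative field, so $f_\eps$ is symmetric jointly convex by construction, and one proves the sandwich estimate $|i\cdot\nu|+|j\cdot\nu|\leq f_\eps(i,j,\nu)\leq |i\cdot\nu|+|j\cdot\nu|+2\eps$ for $i\neq j$; the translations $x_k$ are what decouple the two traces (essential when $|i|=|j|$) and are also the source of the $2\eps$ error, which is why the exact density $g$ is not reached. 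Applying \cite[Theorem 5.1]{FPS} to $f_\eps$, absorbing the error term $2\eps\,\Ha^{d-1}(J_{u_n})$ via the assumed uniform bound on the jump sets, and letting $\eps\to 0$ gives the statement. This explicit approximation by symmetric jointly convex functions is the missing idea in your proposal. A minor additional remark: the preliminary compactness extraction through Theorem \ref{thm:sbd} is unnecessary, since the statement already assumes $u_n\to u$ in measure with $u\in SBD(\Om)$, which is the convergence under which the FPS result is applied; the jump-set bound is needed, but only to control the $\eps$-error just described.
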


\begin{proof}
Let us consider a countable set of functions $\{\varphi_h\,:\,h\in \N\}$ which is dense with respect to $\|\cdot \|_\infty$ inside the set
$$
\left\{f\in C^0_c(]0,+\infty[)\,:\, \int_0^{+\infty} f\,\dd t=0 \text{ and }\|f\|_\infty\leq 1\right\}.
$$
Given $\eps>0$, let us consider
$$
g_{h,k}(x):=\int_0^{\frac{1}{2}|x-x_k|^2}\varphi_h(t)\,dt,
$$
where $\{x_k\,:\, k\in\N\}$ is a countable and dense set in $B_\eps(0)\subset \R^d$ with $x_0=0$. Clearly $g_{h,k}\in C^1_c(\R^d)$ with
$$
G_{h,k}(x):=\nabla g_{h,k}(x)=\varphi_h\left( \frac{1}{2}|x-x_k|^2\right) (x-x_k).
$$
We have that $G_{h,k}$ is a continuous conservative vector field with compact support on $\R^d$.
\par
Let us set for $(i,j)\in \R^d\times \R^d$ and $\nu \in \R^d$ with $|\nu|=1$
$$
f_\eps(i,j,\nu):=\sup_{h,k}(G_{h,k}(i)-G_{h,k}(j))\cdot \nu.
$$
By construction $f_\eps$ is a {\it symmetric jointly convex function} according to \cite[Definition 3.1]{FPS}. We claim that for $i\not=j$
\begin{equation}
\label{eq_fe}
|i\cdot \nu|+|j\cdot \nu|\leq f_\eps(i,j,\nu)\leq |i\cdot \nu|+|j\cdot \nu|+2\eps.
\end{equation}
In view of the lower semicontinuity result \cite[Theorem 5.1]{FPS} we have
$$
\liminf_{n\to +\infty} \int_{J_{u_n}}f_\eps(u^+_n,u^-_n,\nu_{u_n})\,d\Ha^{d-1}\geq \int_{J_{u}}f_\eps(u^+,u^-,\nu_{u})\,d\Ha^{d-1}.
$$
We can thus write

\begin{align*}
&\liminf_{n\to+\infty} \left[ \int_{J_{u_n}}\left[ |u^+_n\cdot \nu_{u_n}|+|u^-_n\cdot \nu_{u_n}|\right]\,d\Ha^{d-1}+2\eps \Ha^{d-1}(J_{u_n})\right]
\\
&\geq \liminf_{n\to+\infty} \int_{J_{u_n}}f_\eps(u^+_n,u^-_n,\nu_{u_n})\,d\Ha^{d-1}
\geq \int_{J_{u}}f_\eps(u^+,u^-,\nu_{u})\,d\Ha^{d-1}\\
&\ge
\int_{J_{u}}\left[ |u^+\cdot \nu_{u}|+|u^-\cdot \nu_{u}|\right]\,d\Ha^{d-1},
\end{align*}
so that the result follows taking into account the bound on $\Ha^{d-1}(J_{u_n})$ and letting $\eps\to 0$.
\par
In order to complete the proof, we need to show claim \eqref{eq_fe}. The estimate from above follows from
$$
[G_{h,k}(i)-G_{h,k}(j)]\cdot \nu\leq |(i-x_k)\cdot \nu|+|(j-x_k)\cdot \nu|\leq |i\cdot \nu|+|j\cdot \nu|+2\eps
$$
since  $\|\varphi_h\|_\infty \leq 1$ and $|x_k|<\eps$. Let us prove the estimate from below. We select $x_{k_n}\to 0$ such that $|i-x_{k_n}|\not=|j-x_{k_n}|$ (which is always possibile in view of the density of $\{x_k\,:\, k\in \N\}$ inside $B_\eps(0)$ and since $i\not=j$) and then $\varphi_{h_n}$ such that for $n\to+\infty$
$$
\varphi_{h_n}\left(\frac{1}{2}|i-x_{k_n}|^2\right)\to \frac{i\cdot \nu}{|i\cdot \nu|+\eta}\qquad \text{and}\qquad \varphi_{h_n}\left(\frac{1}{2}|j-x_{k_n}|^2\right)\to 
-\frac{j\cdot \nu}{|j\cdot \nu|+\eta},
$$
where $\eta>0$. By definition of $f_\eps$ we infer that
$$
f_\eps(i,j,\nu)\geq |i\cdot \nu|+|j\cdot \nu|-2\eta,
$$
so that the estimate from below follows by sending $\eta\to 0$.
\end{proof}

We are now in a position to prove the main result of the section.

 \begin{theorem}[\bf Closure of the non-penetration constraint on the jump set]
\label{thm:closure}
Let $\Om\subseteq \R^d$ be a bounded open set, and let $(u_n)_{n\in \N}$ be a sequence in $SBD(\Om)$ such that
$$
\sup_n \left[\int_\Om |e(u_n)|^2\,dx+\Ha^{d-1}(J_{u_n})\right]<+\infty
$$
and
$$
u_n\to u\qquad\text{in measure }
$$
for some $u\in SBD(\Om)$. If
$$
u^\pm_n\cdot \nu_{u_n}=0\qquad\text{$\Ha^{d-1}$-a.e. on }J_{u_n},
$$
then
$$
u^\pm\cdot \nu_{u}=0\qquad\text{$\Ha^{d-1}$-a.e. on }J_{u}.
$$
\end{theorem}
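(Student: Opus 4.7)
The statement of Theorem \ref{thm:closure} will follow as a direct corollary of the lower semicontinuity result in Theorem \ref{thm:sci-normal}. The key observation is that the integrand $|u^+\cdot\nu_u|+|u^-\cdot\nu_u|$ is nonnegative and vanishes precisely where the non-penetration constraint holds, so once we have lower semicontinuity we can upgrade a pointwise constraint on the approximating sequence into a pointwise constraint on the limit.

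Concretely, the plan is as follows. First, I would use the hypothesis $u_n^\pm\cdot\nu_{u_n}=0$ $\Ha^{d-1}$-a.e.\ on $J_{u_n}$ to observe that for every $n\in\N$,
\[
\int_{J_{u_n}}\left[|u_n^+\cdot\nu_{u_n}|+|u_n^-\cdot\nu_{u_n}|\right]\,d\Ha^{d-1}=0.
\]
Next, since the sequence $(u_n)_{n\in\N}$ satisfies exactly the uniform bounds required by Theorem \ref{thm:sci-normal} (namely $\sup_n\int_\Om|e(u_n)|^2\,dx+\Ha^{d-1}(J_{u_n})<+\infty$) and converges to $u$ in measure, I can apply that theorem to deduce
\[
\int_{J_u}\left[|u^+\cdot\nu_u|+|u^-\cdot\nu_u|\right]\,d\Ha^{d-1}\leq \liminf_{n\to+\infty}\int_{J_{u_n}}\left[|u_n^+\cdot\nu_{u_n}|+|u_n^-\cdot\nu_{u_n}|\right]\,d\Ha^{d-1}=0.
\]
Since the integrand on the left is nonnegative, this forces $|u^+\cdot\nu_u|+|u^-\cdot\nu_u|=0$ $\Ha^{d-1}$-a.e.\ on $J_u$, which is exactly the desired conclusion $u^\pm\cdot\nu_u=0$ $\Ha^{d-1}$-a.e.\ on $J_u$.

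There is essentially no obstacle here provided Theorem \ref{thm:sci-normal} has been established: the entire content of Theorem \ref{thm:closure} is that the functional $u\mapsto \int_{J_u}(|u^+\cdot\nu_u|+|u^-\cdot\nu_u|)\,d\Ha^{d-1}$ is lower semicontinuous along the natural convergence and attains its minimum value $0$ only on fields whose traces are tangential on the jump set. The actual work in the proof of Theorem \ref{thm:sci-normal} (which is where the symmetric jointly convex integrands of \cite{FPS} and the auxiliary test vector fields $G_{h,k}$ are used) has already been carried out, so the closure property is extracted from it by a one-line nonnegativity argument. This is why the authors place the technical lower semicontinuity result first and Theorem \ref{thm:closure} immediately after.
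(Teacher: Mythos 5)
Your proposal is correct and is essentially identical to the paper's own proof: the authors also apply Theorem \ref{thm:sci-normal} to the sequence, note that the right-hand side vanishes because of the hypothesis $u_n^\pm\cdot\nu_{u_n}=0$ on $J_{u_n}$, and conclude by nonnegativity of the integrand. No gaps; the closure theorem is indeed a one-line corollary of the lower semicontinuity result.
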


\begin{proof}
By Theorem \ref{thm:sci-normal} we may write
$$
\int_{J_u}\left[ |u^+\cdot \nu_u|+|u^-\cdot \nu_u|\right]\,d\Ha^{d-1}\leq \liminf_{n\to+\infty}
\int_{J_{u_n}}\left[ |u^+_n\cdot \nu_{u_n}|+|u^-\cdot \nu_{u_n}|\right]\,d\Ha^{d-1}=0,
$$
so that the result follows.
\end{proof}

\subsection{A lower semicontinuity result for surface energies in $SBD$} 
\label{subsec_lsc-upm}
In this section we deal with the lower semicontinuity of the surface term of the functional $J$ in \eqref{eq_JEu} connected with the Navier conditions on the obstacle. The following lower semicontinuity result holds true.

 \begin{theorem}
\label{thm:lsc-phi}
Let $\Om\subseteq \R^d$ be an open set, $u_n,u \in SBD(\Om)$ such that
$$
u_n \to u\qquad\text{strongly in }L^1(\Om;\R^d)
$$
and
$$
\sup_n \left[ \int_\Om |e(u_n)|^2\,dx+\Ha^{d-1}(J_{u_n})\right]<+\infty.
$$
Then if $\phi:\R^d \to [0,+\infty]$ is a    lower semicontinuous function, we have
$$
\int_{J_u}[\phi(u^+)+\phi(u^-)]\,d\Ha^{d-1}\leq \liminf_{n\to +\infty} 
\int_{J_{u_n}}[\phi(u_n^+)+\phi(u_n^-)]\,d\Ha^{d-1}.
$$
\end{theorem}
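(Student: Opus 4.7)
The plan is a standard blow-up reduction to a ``model'' problem, followed by a multi-direction slicing argument in $SBD$ to reconstruct the full vector traces from their scalar projections. First I will assume the liminf is finite (else there is nothing to prove) and extract a subsequence along which the nonnegative Radon measures
$$\mu_n := [\phi(u_n^+)+\phi(u_n^-)]\mathcal{H}^{d-1}\lfloor J_{u_n},\quad \sigma_n := \mathcal{H}^{d-1}\lfloor J_{u_n},\quad \tau_n := |e(u_n)|^2 \mathcal{L}^d$$
converge weakly-$*$ on $\Om$ to finite Radon measures $\mu,\sigma,\tau$. By Radon--Nikodym, it suffices to show
$$\tfrac{d\mu}{d\mathcal{H}^{d-1}\lfloor J_u}(x_0) \geq \phi(u^+(x_0))+\phi(u^-(x_0)) \qquad \mathcal{H}^{d-1}\text{-a.e. } x_0 \in J_u,$$
since this implies $\mu \geq [\phi(u^+)+\phi(u^-)]\mathcal{H}^{d-1}\lfloor J_u$ and hence the required bound by the weak-$*$ lower semicontinuity of mass. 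I fix $x_0 \in J_u$ that is simultaneously an approximate jump point of $u$ (with normal $\nu := \nu_u(x_0)$ and traces $u^\pm(x_0)$) and a Besicovitch-type point for $\mu,\sigma,\tau$, so that $\lim_{r\to 0}\mu(Q_\nu(x_0,r))/r^{d-1}$ equals the LHS above, $\sigma(Q_\nu(x_0,r))/r^{d-1}$ is bounded, and $\tau(Q_\nu(x_0,r)) = o(r^{d-1})$. Here $Q_\nu(x_0,r)$ is the cube of side $r$ centered at $x_0$ with two faces normal to $\nu$; all these conditions hold $\mathcal{H}^{d-1}$-a.e.\ on $J_u$, the last one since $\mathcal{L}^d \perp \mathcal{H}^{d-1}\lfloor J_u$.

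Rescaling $v_{n,r}(y) := u_n(x_0+ry)$ on the unit cube $Q := Q_\nu(0,1)$, the scaling identities
$$\tfrac{\mu_n(Q_\nu(x_0,r))}{r^{d-1}} = \int_{J_{v_{n,r}}\cap Q}[\phi(v_{n,r}^+)+\phi(v_{n,r}^-)]\,d\mathcal{H}^{d-1},\quad \int_Q|e(v_{n,r})|^2\,dy = r^{2-d}\tau_n(Q_\nu(x_0,r)),$$
together with the analogous one for $\sigma_n$, allow a diagonal extraction (taking $n\to\infty$ first for each fixed $r$ outside the countable set of ``bad'' radii with atoms of $\mu,\sigma,\tau$ on $\partial Q_\nu(x_0,r)$, then sending $r\to 0$) to produce a sequence $w_k := v_{n_k,r_k}$ such that $w_k \to \bar u$ in $L^1(Q)$, $e(w_k)\to 0$ in $L^2(Q)$, $\mathcal{H}^{d-1}(J_{w_k}\cap Q) = O(1)$ and
$$\lim_k \int_{J_{w_k}\cap Q}[\phi(w_k^+)+\phi(w_k^-)]\,d\mathcal{H}^{d-1} = \tfrac{d\mu}{d\mathcal{H}^{d-1}\lfloor J_u}(x_0),$$
where $\bar u := a^+\mathbf{1}_{\{y\cdot\nu>0\}} + a^-\mathbf{1}_{\{y\cdot\nu<0\}}$, $a^\pm := u^\pm(x_0)$. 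The theorem reduces to the model inequality $\liminf_k \int_{J_{w_k}\cap Q}[\phi(w_k^+)+\phi(w_k^-)]\,d\mathcal{H}^{d-1} \geq \phi(a^+)+\phi(a^-)$.

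For the model inequality, fix $\eta > 0$ and, by lsc of $\phi$, pick $\delta > 0$ with $\phi \geq \phi(a^\pm)-\eta$ on $B_\delta(a^\pm)$; set
$$G_k := \bigl\{x \in J_{w_k}\cap Q \,:\, \{w_k^+(x),w_k^-(x)\}\cap B_\delta(a^+)\neq\emptyset \text{ and } \{w_k^+(x),w_k^-(x)\}\cap B_\delta(a^-)\neq\emptyset\bigr\}.$$
Once $\liminf_k \mathcal{H}^{d-1}(G_k) \geq 1 = \mathcal{H}^{d-1}(J_{\bar u}\cap Q)$, bounding the integrand from below by $\phi(a^+)+\phi(a^-)-2\eta$ on $G_k$ and letting $k\to\infty$, $\eta\to 0$ closes the proof. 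The trace-concentration for $G_k$ is proved by $SBD$-slicing (Theorem \ref{thm:sbd-1s}) along a countable dense family $\Xi \subset S^{d-1}$ of directions with $\xi\cdot\nu\neq 0$: for each $\xi\in\Xi$ and $\mathcal{H}^{d-1}$-a.e.\ slice $y\in\xi^\perp$, the scalar $SBV$ function $\hat w_k^{\xi,y}(t) := w_k(y+t\xi)\cdot\xi$ has derivative $(e(w_k)\xi\cdot\xi)(y+t\xi) \to 0$ in $L^2$ (via Fubini applied to $\int_Q|e(w_k)|^2 \to 0$) and $L^1$-limit the scalar step of values $a^\pm\cdot\xi$; a Cauchy--Schwarz estimate $|\hat w_k^{\xi,y}(t)-(\hat w_k^{\xi,y})^\pm(t^*)| \leq |t-t^*|^{1/2}\|(\hat w_k^{\xi,y})'\|_{L^2}$ combined with $L^1$-convergence on each half interval forces, outside a vanishing $y$-measure, all scalar one-sided traces at jump points $t^*$ of $\hat w_k^{\xi,y}$ to lie within $o(1)$ of $\{a^+\cdot\xi, a^-\cdot\xi\}$. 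Running this simultaneously for countably many $\xi\in\Xi$ that separate points of $\R^d$ and re-expressing the slice statements as $\mathcal{H}^{d-1}$-statements on $J_{w_k}$ via the slicing identity pins down the full vector trace $\{w_k^+(x),w_k^-(x)\} \subset B_\delta(a^+)\cup B_\delta(a^-)$ on a subset of $J_{w_k}\cap Q$ whose $\mathcal{H}^{d-1}$-measure tends to $1$; the correct pairing (which trace is close to $a^+$ vs.\ $a^-$) is fixed by matching orientations against $\nu$, using the $L^1$-convergence of $w_k$ to the constants $a^\pm$ on each half-cube.

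The main obstacle is this multi-direction trace reconstruction. $SBD$-slicing controls only the scalar projection $w_k\cdot\xi$ along $\xi$-lines, whereas $\phi(w_k^\pm)$ depends on the full vector trace; consequently one must run countably many one-dimensional arguments in parallel and control the exceptional ``bad slice'' sets uniformly in $k$ and across the family $\Xi$. A naive single-direction slicing would yield only the weaker $|\xi\cdot\nu|$-weighted lower bound flagged in the introduction. It is precisely the rigidity of the blow-up limit $\bar u$---two constants joined across a flat hyperplane---that makes the multi-direction argument effective: once $\nu$ is fixed by the blow-up, averaging over a rich family of $\xi$ removes the $|\xi\cdot\nu|$ weight and recovers the full density $\phi(a^+)+\phi(a^-)$.
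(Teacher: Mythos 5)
Your overall architecture (blow-up to the two-constant model on the unit cube, then multi-directional slicing to reconstruct the vector traces) is the same as the paper's, and both the blow-up reduction and the final step from $\liminf_k\Ha^{d-1}(G_k)\ge 1$ to the model inequality are sound. The gap sits exactly where you flag ``the main obstacle'', and the mechanism you propose there does not work. First, the per-direction claim is false as stated: it is not true that, outside a vanishing set of slices, \emph{all} one-sided traces of $\hat w_k^{\xi,y}$ at \emph{all} of its jump points lie within $o(1)$ of $\{a^+\cdot\xi,a^-\cdot\xi\}$. Take $w_k=\bar u+M\,1_{R_k}$ with $R_k$ a pancake of fixed radius and thickness $1/k$: then $w_k\to\bar u$ in $L^1$, $e(w_k)=0$, $\Ha^{d-1}(J_{w_k})$ stays bounded, yet a fixed positive fraction of the slices (and of $J_{w_k}$) carries traces at distance about $M$ from $\{a^+\cdot\xi,a^-\cdot\xi\}$. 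Relatedly, your Cauchy--Schwarz bound $|\hat w_k^{\xi,y}(t)-(\hat w_k^{\xi,y})^\pm(t^*)|\le|t-t^*|^{1/2}\|(\hat w_k^{\xi,y})'\|_{L^2}$ is valid only if the slice has no jump between $t$ and $t^*$; for an $SBV$ slice the jump part enters the fundamental theorem of calculus. This is why the paper controls only a \emph{selected} jump point on each line (the first intersection of the line issued from a band $H^-_\eps$ near the interface, where Egorov gives uniform smallness of the absolutely continuous part and uniform convergence of the datum on $H^-_\eps$), not all jump points.

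Second, and more fundamentally, the passage from per-direction scalar control to control of the full vector trace on a common set of measure close to $1$ is not achieved by ``running this simultaneously for countably many $\xi\in\Xi$ that separate points''. For each direction $\xi_i$ you can at best produce a good subset $J^i_k\subseteq J_{w_k}$ with $\Ha^{d-1}(J^i_k)\ge 1-\eps$ on which $w_k^\pm\cdot\xi_i$ is controlled; but the ambient set only satisfies $\Ha^{d-1}(J_{w_k})\le C$ with $C$ possibly much larger than $1$, so $d$ such sets attached to $d$ independent directions may be pairwise disjoint, and passing to a countable dense family does not remedy this. What is needed is a set of points, of measure at least $1-c_\eps$, each of which lies simultaneously in $d$ of the good sets for $d$ linearly independent directions (only then is the vector trace pinned down); the paper obtains exactly this via the combinatorial covering Lemma \ref{lem:cov}, applied with a finite but large number $N=N(C,\eps,d)$ of directions all taken within $\delta$ of the normal, together with the area-formula estimate $\Ha^{d-1}(J^i_k)\ge 1-2\eps$ and the orientation selection $\nu_{w_k}\cdot\xi_i>0$ that decides which trace is the lower and which is the upper one. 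Without this selection mechanism (or a genuine substitute), your key claim $\liminf_k\Ha^{d-1}(G_k)\ge 1$ is unproven, and the rest of the argument rests on it.
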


This applies in particular to $\phi(u)=|u|^2$ and $\phi(u)=1_{\{u\neq 0\}}$, which will be of interest to us. 

\begin{proof}
Notice first that $\phi$ may be supposed to be continuous. Indeed for any lower-semicontinuous nonnegative $\phi$, by considering a sequence of continuous nonnegative functions $\phi_k\nearrow\phi$ we get
\begin{align*}
\int_{J_u}[\phi(u^+)+\phi(u^-)]\,d\Ha^{d-1}&=\liminf_{k\to\infty}\int_{J_u}[\phi_k(u^+)+\phi_k(u^-)]\,d\Ha^{d-1}\\
&\leq \liminf_{k\to\infty}\liminf_{n\to +\infty} 
\int_{J_{u_n}}[\phi_k(u_n^+)+\phi_k(u_n^-)]\,d\Ha^{d-1}\\
&\leq \liminf_{n\to +\infty} 
\int_{J_{u_n}}[\phi(u_n^+)+\phi(u_n^-)]\,d\Ha^{d-1}
\end{align*}

Through a  by now standard  blow-up argument ( see Remark \ref{rem:blow}), we can reduce the problem to the following lower semicontinuity result. Let $Q_1\subseteq \R^d$ be the unit square centred at $0$, and let us set
$$
H:=Q_1 \cap \{x_d=0\}\qquad\text{and}\qquad Q_1^\pm:=Q_1\cap \{x_d \gtrless 0\}.
$$
Given $u^\pm \in \R^d$ with $u^+\not=u^-$ and $u_n\in SBD(Q_1)$ with
\begin{equation}
\label{eq_unu}
u_n\to u:=u^+1_{Q_1^+}+u^-1_{Q_1^-}\qquad\text{strongly in }L^1(Q_1;\R^d),
\end{equation}
\begin{equation}
\label{eq_jun}
\sup_n \Ha^{d-1}(J_{u_n})<+\infty
\end{equation}
and
\begin{equation}
\label{eq_eun}
e(u_n)\to 0\qquad\text{strongly in }L^1(Q_1;M^{d\times d}_{sym}),
\end{equation}
then 
\begin{equation}
\label{eq:blow-sci}
\phi(u^+)+\phi(u^-)\leq \liminf_{n\to +\infty} \int_{J_{u_n}}[\phi(u_n^+)+\phi(u_n^-)]\,d\Ha^{d-1}.
\end{equation}
We now divide the proof in several steps, and we will employ the characterization by sections of $SBD$ functions explained in Section \ref{sec_prel}.

\vskip10pt\noindent{\bf Step 1.} Let $\eps>0$ be given. We fix $\delta>0$ and $N\in \N$ with $N>d$: these numbers will be subject to several constraints that will appear during the proof. 
\par
Let us fix $N$ unit vectors $\{\xi_i\}_{1\leq i \leq N}$ such that
\begin{equation}
\label{eq_ed-delta}
|e_d\cdot \xi_i-1|<\delta
\end{equation}
and such that any subset of $d$ of them forms a basis of $\R^d$. Moreover, we may assume in addition that 
\begin{equation}
\label{eq_disc-u}
(u^+-u^-)\cdot \xi_i\not=0
\end{equation}
for every $i=1,\dots,N$.
\par
Thanks to \eqref{eq_unu} and \eqref{eq_jun}, we can fix $a>0$ small such that setting $H^\pm:=H \times \{\pm a\}=H\pm a e_d$, we have
$$
(u_n)_{|H^\pm} \to u^\pm \qquad\text{strongly in }L^1(H^\pm;\R^d)
$$
and
\begin{equation}
\label{eq_hdun0}
\forall n\in \N\,:\, \Ha^{d-1}(J_{u_n}\cap H^\pm)=0.
\end{equation}

\vskip10pt\noindent{\bf Step 2.}
We claim that, up to a subsequence, we can find $H^-_\eps\subset H^-$ with
\begin{equation}
\label{eq_1}
\Ha^{d-1}(H^-\setminus H^-_\eps)<\eps
\end{equation}
such that for every $i=1,\dots,N$, for every $y\in H^-_\eps$ and for every $n\in \N$
\begin{equation}
\label{eq_3}
H^-_\eps\cap J_{u_n}=\emptyset,
\end{equation}
and
\begin{equation}
  \label{eq_6bis}
   \Ha^0((J_{u_n})_y^{\xi_i})<+\infty, \quad 
  \Ha^0((J_{u_n})_y^{\xi_i}{ \cap \R_+}) \ge 1 \,.
\end{equation}
Moreover setting
$$
\widehat{(u_n)}_y^{\xi_i}:=u_n(y+t\xi_i)\cdot \xi_i,
$$
for every $y\in H_\eps^-$ we have
\begin{equation*}
\label{eq_bv-xi}
\widehat{(u_n)}_y^{\xi_i}\in SBV((Q_1)^{\xi_i}_y), 
\end{equation*}
\begin{equation}
\label{eq_jump-sec}
 J_{\widehat{(u_n)}_y^{\xi_i}}=\left( J_{u_n}\right)^{\xi_i}_y
\end{equation}
{ (\textit{cf}~notation~\eqref{eq:Juxi}),}
\begin{equation}
\label{eq_oscun1}
\|[\widehat{(u_n)}_y^{\xi_i}]'\|_{L^1}\to 0 \qquad \text{uniformly for }y\in H^-_\eps,
\end{equation}
and
\begin{equation}
\label{eq_un-unif-D2}
(u_n)_{|H^-}\to u^-\qquad\text{uniformly on }H^-_\eps.
\end{equation}
Indeed, if the number $\delta$ appearing in \eqref{eq_ed-delta} is small enough, we can find $A^-_\eps\subseteq H^-$ with
\begin{equation}
\label{eq_Ae}
\Ha^{d-1}(H^-\setminus A^-_\eps)<\frac{\eps}{2}
\end{equation}
and such that for every $y\in A^-_\eps$ the lines $\{y+t\xi_i\,:\, t\in\R\}$ intersect $H^+$ for every $i=1,\dots,N$. In view of \eqref{eq_unu}, \eqref{eq_jun} and \eqref{eq_eun}, and since pointwise convergence implies almost uniform convergence, we can find $N_\eps\subset A^-_\eps$ with 
\begin{equation}
\label{eq_Ne}
\Ha^{d-1}(N_\eps)<\frac{\eps}{2}
\end{equation}
and such that, up to a subsequence
\begin{equation}
\label{eq_unxi-unif}
\|\widehat{(u_n)}_y^{\xi_i}-\widehat{u}_y^{\xi_i}\|_{L^1} \to 0 \qquad \text{uniformly for }y\in A^-_\eps\setminus N_\eps
\end{equation}
\begin{equation}
\label{eq_oscun}
\|[\widehat{(u_n)}_y^{\xi_i}]'\|_{L^1}\to 0 \qquad \text{uniformly for }y\in A^-_\eps\setminus N_\eps
\end{equation}
\begin{equation}
\label{eq_un-unif-D}
(u_n)_{|H^-}\to u^-\qquad\text{uniformly on }A^-_\eps\setminus N_\eps,
\end{equation}
and for every $y\in A^-_\eps\setminus N_\eps$
\begin{equation}
\label{eq_h0fin}
\Ha^0((J_{u_n})^{\xi_i}_y)<+\infty.
\end{equation}
Notice that for $n$ large enough and for every $y\in A^-_\eps\setminus N_\eps$ we have 
\begin{equation}
\label{eq_h01}
(J_{u_n})_y^{\xi_i}\not=\emptyset.
\end{equation}
Indeed otherwise, we would get for $n_k\to +\infty$ the existence of $y_k\in A^-_\eps\setminus N_\eps$ with  $\widehat{(u_{n_k})}_{y_k}^{\xi_i}\in W^{1,1}((Q_1)^{\xi_i}_{y_k})$, and \eqref{eq_un-unif-D} together with \eqref{eq_oscun} would yield
$$
\|\widehat{(u_{n_k})}_{y_k}^{\xi_i}-u^-\|_1 \to 0
$$
against \eqref{eq_unxi-unif} (recall that by the choice \eqref{eq_disc-u} of the $\xi_i$, the functions $\widehat{u}_y^{\xi_i}$ have a jump). 
The claim follows by setting
$$
H^-_\eps:=A_\eps\setminus \left[N_\eps \cup \bigcup_n (J_{u_n}\cap H^-)\right].
$$
Indeed \eqref{eq_1} follows from \eqref{eq_Ae}, \eqref{eq_Ne} and \eqref{eq_hdun0}, while \eqref{eq_3} is clearly satisfied. Relation \eqref{eq_6bis} follows by \eqref{eq_h0fin} and \eqref{eq_h01}, while relation \eqref{eq_oscun1} follows from \eqref{eq_oscun}. Finally relation \eqref{eq_un-unif-D2} follows from \eqref{eq_un-unif-D}.

\vskip10pt\noindent{\bf Step 3.} For every $i=1,\dots,N$, let us consider the set
$J_n^{i,-}$ given by the first point of intersection (with $t>0$) of the line $\{y+t\xi^i\,:\, t\in\R\}$ with the jump set $J_{u_n}$ as $y$ varies in the set $H^-_\eps$ defined in Step 2 (recall \eqref{eq_6bis} and \eqref{eq_jump-sec}). In view of \eqref{eq_oscun1} and \eqref{eq_un-unif-D2}, we can find $\eta_n\to 0$ such that for every $x\in J_n^{i,-}$
with $\nu_{u_n}\cdot \xi_i>0$
\begin{equation}
\label{eq_components}
|u^-_n(x)\cdot \xi_i-u^-\cdot \xi_i|<\eta_n.
\end{equation}

\vskip10pt\noindent{\bf Step 4.} We claim that, for $\delta$ small enough and $N$ large enough, up to a subsequence, we can find $\tilde J_n^- \subseteq J_{u_n}$ with
\begin{equation}
\label{eq_t1}
\Ha^{d-1}(\tilde J_n^-) \ge1-c_\eps,
\end{equation}
where $c_\eps\to 0$ as $\eps\to 0$, and such that for every $x\in \tilde J_n^-$
\begin{equation}
\label{eq_interJ}
x\in J^{i,-}_n \text{ for $d$ different indices $i\in \{1,\dots,N\}$},
\end{equation}
where $J^{i,-}_n$ is defined in Step 3.
Moreover, we can orient $\nu_{u_n}$ on $\tilde J_n^-$ in such a way that
\begin{equation}
\label{eq_orientation}
e_d\cdot \nu_{u_n}>0\qquad\text{and}\qquad \xi_i\cdot \nu_{u_n}>0 \text{ for every $i=1,\dots,N$}.
\end{equation}
Intuitively speaking, the points in $\tilde J^-_n$ are seen from $H^-_\eps$ under $d$ different directions: moreover the associated lines cut the jump transversaly, from the ``lower'' to the ``upper'' part.
\vskip10pt
Indeed, in view of the definition of $\xi_i$ (which form a very small angle with $e_d$ as $\delta\to 0$) and of the area formula {(\textit{cf} for instance~\cite[Sec.~3.2]{Fed})}, we can assume that $\delta$ is so small that for every $i=1,\dots,N$ 
\begin{equation}
\label{eq_area}
\Ha^{d-1}(J_n^{i,-})\geq \int_{J_n^{i,-}}|\nu_{u_n}\cdot \xi_i|\,d\Ha^{d-1}=\Ha^{d-1}((H_\eps^-)^{\xi_i})=
\frac{1}{1+\hat c_\delta}\Ha^{d-1}(H_\eps^-),
\end{equation}
{ where the notation $(H_\eps^-)^{\xi_i}$ is defined in~\eqref{eq:Exi} and}
where $\hat c_\delta\to 0$, so that, taking into account \eqref{eq_1}, for small $\delta$ we have
\begin{equation}
\label{eq_t1tris}
\Ha^{d-1}(J_n^{i,-})\geq 1-2\eps.
\end{equation}
By Lemma \ref{lem:cov} below (with $X=J_{u_n}$, $\mu=\Ha^{d-1}$, and $\mathcal{M}$ given by the family of Borel sets) if $N$ is large enough we can find an index $\bar i$ such that
\begin{equation}
\label{eq_LemmaN}
\Ha^{d-1}\left( J_n^{\bar i,-}\setminus \bigcup_{\stackrel{i_1<i_2<\dots<i_d}{i_h=1,\dots,N}} \left(J_n^{i_1,-}\cap J_n^{i_2,-}\cap\dots\cap J_n^{i_d,-}\right)\right)<\eps.
\end{equation}
Intuitively speaking, most of the points in $J_{n}^{\bar i,-}$ are seen from $H^-_\eps$ at least under $d$ different directions: we call this set $\tilde J_n^-$, i.e.,
\begin{equation}
\label{eq_defJm}
\tilde J_n^-:=J_n^{\bar i,-}\cap \bigcup_{\stackrel{i_1<i_2<\dots<i_d}{i_h=1,\dots,N}} \left(J_n^{i_1,-}\cap J_n^{i_2,-}\cap\dots\cap J_n^{i_d,-}\right).
\end{equation}
In view of \eqref{eq_t1tris} and \eqref{eq_LemmaN} we get
\begin{equation}
\label{eq_t1bis}
\Ha^{d-1}(\tilde J_n^-)\geq 1-3\eps.
\end{equation}
\par
Finally, if we set
$$
G_{n,\eps}:=\{x\in \tilde J^{-}_n\,:\, |\nu_{u_n}\cdot \xi_{\bar i}|> \eps\}\qquad\text{and}\qquad B_{n,\eps}:=\tilde J^-_n\setminus G_{n,\eps},
$$
coming back to \eqref{eq_area} we have
$$
\Ha^{d-1}(G_{n,\eps})+\eps^2\Ha^{d-1}(B_{n,\eps})>1-3\eps,
$$
so that
$$
\Ha^{d-1}(G_{n,\eps})>1-3\eps-\eps^2C,
$$
where $C:=\sup_n \Ha^{d-1}(J_{u_n})<+\infty$. Finally we orient the normal $\nu_{u_n}$ on $G_{n,\eps}$ in such a way that
$$
\nu_{u_n}\cdot \xi_{\bar i}>\eps.
$$
The inequalities \eqref{eq_orientation} then also hold true on $G_{n,\eps}$ if $\delta$ is small enough thanks to \eqref{eq_ed-delta}.
Reducing $\tilde J^-_n$ to $G_{n,\eps}$ if necessary, the full claim follows taking into account \eqref{eq_defJm} and \eqref{eq_t1bis}.

\vskip10pt\noindent{\bf Step 5.} Let $\tilde J^-_n\subseteq J_{u_n}$ be the set given by Step 4. Since the points of this set are seen from $H^-_\eps$ under $d$ different directions, in view of \eqref{eq_components} we infer that there exists $\tilde \eta_n\to 0$ such that for every $x\in \tilde J^-_n$
$$
|u^-_n(x)-u^-|<\tilde \eta_n.
$$
Reasoning in a similar way starting from the upper part $H^+_\eps$, and employing the opposite directions $\{-\xi_i\,:\, i=1,\dots,N\}$, we can construct $\tilde J^+_n\subseteq J_{u_n}$ with $\nu_{u_n}$ oriented such that again
$$
e_d\cdot \nu_{u_n}>0\qquad\text{and}\qquad \xi_i\cdot \nu_{u_n}>0 \text{ for every $i=1,\dots,N$},
$$
such that
\begin{equation}
\label{eq_t1+}
\Ha^{d-1}(\tilde J^+_n)\geq 1-c_\eps
\end{equation}
with $c_\eps \to 0$ as $\eps\to 0$, and such that for every $x\in \tilde J^+_n$
$$
|u^+_n(x)-u^+|<\tilde \eta_n.
$$
Notice that for $x\in \tilde J^-_n\cap \tilde J^+_n$, the orientation chosen is compatible with that of \eqref{eq_orientation}, so that indeed $u_n^-(x)$ and $u_n^+(x)$ are the two traces of $u_n$ at $x$.
\par
We can thus write, in view of the continuity of $\phi$
\begin{align*}
&\int_{J_{u_n}}[\phi(u_n^+)+\phi(u_n^-)]\,d\Ha^{d-1} \geq \int_{\tilde J_{n}^+\cap \tilde J_n^-}[\phi(u_n^+)+\phi(u_n^-)]\,d\Ha^{d-1} +
\int_{\tilde J_{n}^+\Delta \tilde J_n^-}[\phi(u_n^+)+\phi(u_n^-)]\,d\Ha^{d-1}\\
&\geq \int_{\tilde J_{n}^+\cap \tilde J_n^-}[\phi(u_n^+)+\phi(u_n^-)]\,d\Ha^{d-1}+\int_{\tilde J_{n}^+\setminus \tilde J_n^-}\phi(u_n^+)\,d\Ha^{d-1}+\int_{\tilde J_{n}^-\setminus \tilde J_n^+}\phi(u_n^-)\,d\Ha^{d-1}\\
&\geq \int_{\tilde J_{n}^+}\phi(u_n^+)\,d\Ha^{d-1}+\int_{\tilde J_{n}^-}\phi(u_n^-)\,d\Ha^{d-1}\\
&\geq [\phi(u^+)-\tilde \eta_n]\Ha^{d-1}(\tilde J_{n}^+)+[\phi(u^-)-\tilde \eta_n]\Ha^{d-1}(\tilde J_{n}^-)
\end{align*}
where $\tilde\eta_n\to 0$, so that, taking into account \eqref{eq_t1} and \eqref{eq_t1+}
$$
\liminf_{n\to+\infty} \int_{J_{u_n}}[\phi(u_n^+)+\phi(u_n^-)]\,d\Ha^{d-1}\geq [\phi(u^+)+\phi(u^-)](1-2c_\eps).
$$
The conclusion follows by letting $\eps\to 0$.
\end{proof}

 In the proof of Theorem \ref{thm:lsc-phi}  we made use of the following abstract lemma.

 \begin{lemma}
\label{lem:cov}
Let $(X,\mathcal{M},\mu)$ be a finite measure space. Let $\eps>0$ and $d\geq 2$. Then there exists $N\in \N$ that only depends on $\mu(X),\eps,d$ such that if $\{E_i\}_{i=1,\dots, N}$ is a family of sets in $\mathcal{M}$, we can find $\bar i$ such that
$$
\mu\left( E_{\bar i} \setminus \bigcup_{j_1<j_2<\dots<j_d} (E_{j_1}\cap E_{j_2}\cap\dots\cap E_{j_d})\right)<\eps.
$$
\end{lemma}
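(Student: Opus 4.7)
The statement is purely combinatorial/measure-theoretic and should follow from a simple counting (pigeonhole) argument. The key observation is to rewrite the complement of $\bigcup_{j_1<\dots<j_d}(E_{j_1}\cap\dots\cap E_{j_d})$ as the set of points belonging to at most $d-1$ of the sets $E_i$.

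Concretely, I would set
\[
f(x):=\sum_{i=1}^N \mathbf{1}_{E_i}(x),\qquad A:=\{x\in X\,:\, f(x)\le d-1\}.
\]
Then a point $x$ lies in $E_{j_1}\cap\cdots\cap E_{j_d}$ for some choice of indices $j_1<\cdots<j_d$ if and only if $f(x)\ge d$, i.e.\ $x\in X\setminus A$. Therefore the quantity to estimate equals $\mu(E_i\cap A)$, and we must prove that for $N$ large enough, depending only on $\mu(X), \eps$, $d$, there exists $\bar i$ with $\mu(E_{\bar i}\cap A)<\eps$.

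The pigeonhole step is then: assume by contradiction that $\mu(E_i\cap A)\ge \eps$ for every $i=1,\dots,N$. Summing over $i$ and using $f\le d-1$ on $A$,
\[
N\eps\le \sum_{i=1}^N\mu(E_i\cap A)=\int_A f\,d\mu\le (d-1)\mu(A)\le (d-1)\mu(X).
\]
Hence the inequality fails as soon as $N>(d-1)\mu(X)/\eps$, so it suffices to choose any integer $N>(d-1)\mu(X)/\eps$, which depends only on the claimed data. The only mildly delicate point is the identification of the exceptional set with $\{f\le d-1\}$, which is just a combinatorial rewriting; no measurability issue arises since $f$ is a finite sum of indicators of measurable sets.
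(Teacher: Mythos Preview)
Your proof is correct and is genuinely simpler than the paper's. The paper argues by induction on $d$: for $d=2$ it uses that the sets $E_i\setminus\bigcup_{j\ne i}E_j$ are pairwise disjoint to find one of measure at most $\mu(X)/N$, and for the inductive step it partitions a family of $N(d,\eps/2)\cdot\lfloor 2/\eps\rfloor$ sets into groups, applies the base case within each group to select representatives, and then applies the induction hypothesis to those representatives. This yields a bound for $N$ of order roughly $(\mu(X)/\eps)^{d-1}$.

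Your direct counting argument, via the function $f=\sum_i 1_{E_i}$ and the observation that the exceptional set is exactly $E_i\cap\{f\le d-1\}$, avoids the induction entirely and gives the much sharper bound $N>(d-1)\mu(X)/\eps$, linear in both $1/\eps$ and $d-1$. The identification of the complement of $\bigcup_{j_1<\dots<j_d}(E_{j_1}\cap\dots\cap E_{j_d})$ with $\{f\le d-1\}$ is indeed just the pigeonhole reformulation, and the pigeonhole over $i$ via $\int_A f\,d\mu$ is clean. Nothing is missing; your argument is both shorter and quantitatively stronger than the paper's.
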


\begin{proof}
Up to dividing $\eps$ by $\mu(X)$ we suppose without loss of generality that $\mu(X)=1$. It is enough to prove that for any $d\geq 2,\eps>0$, there is some $N(d,\eps)\ge 1$ such that any family of $N\geq N(d,\eps)$ of sets $(E_i)_{1\leq i\leq N}$ there is some $i$ that verifies
\[
\mu\left(E_i\setminus\bigcup_{J\subset [1,N]\setminus \{i\},|J|=d-1}\bigcap_{j\in J}E_j\right)<\eps,
\]
meaning that there is some $i$ such that every point of $E_i$ outside a set of measure  less than  $\eps$ is in (at least) $d-1$ other sets $E_j$ (for $j\neq i$). 
\par
We prove it by recursion. If $d=2$, let $N:=\left[\frac{1}{\eps}\right]$,  where $[\cdot]$ denotes the integer part. Given  $(E_i)_{1\leq i\leq N}$, let us consider the sets $\left(E_{i}\setminus\bigcup_{1\leq j\leq N,j\neq i}E_{j}\right)_{1\leq i\leq N}$. These are disjoint and $\mu(X)=1$, so there is some $i$ such that
\[
\mu\left(E_{i}\setminus\bigcup_{1\leq j\leq N,j\neq i}E_{j}\right)\leq \frac{1}{N}\leq\eps,
\]
which proves the initialisation. 
\par
 Assume now that the result is true for $d$ and let us check it for $d+1$. Let

\[
N:=N\left(d,\frac{\eps}{2 }\right)\qquad\text{and}\qquad M:=\left[\frac{2 }{\eps}\right],
\] 
and let us consider $N\times M$ sets that we classify into $N$ groups of $M$ sets, written $(E_{k,i})_{1\leq k\leq N,1\leq i\leq M}$. For every $k\in [ 1,N]$, the sets $\left(E_{k,i}\setminus\bigcup_{1\leq j\leq M,j\neq i}E_{k,j}\right)_{1\leq i\leq M}$ are disjoints so there is some $i_k$ such that
\[\mu\left(E_{k,i_k}\setminus\bigcup_{1\leq i\leq M,i\neq i_k}E_{k,i}\right)\leq \frac{1}{M}\leq \frac{\eps}{2}.\]
Considering the sets $(E_{k,i_k})_{1\leq k\leq N}$, since $N=N\left(d,\frac{\eps}{2}\right)$ we find some $\overline{k}$ such that
\[
\mu\left(E_{\overline{k},i_{\overline{k}}}\setminus\bigcup_{K\subset [1,N]\setminus \{\overline{k}\},|K|=d-1}\bigcap_{k\in K}E_{k,i_k}\right)\leq\frac{\eps}{2}.
\]
This means that outside a set of measure at most $\frac{\eps}{2}$, every point of $E_{\overline{k},i_{\overline{k}}}$ is in $d-1$ sets of the form $E_{k,i_k}$ for $k\neq\overline{k}$, and similarly every point outside a set of measure at most $\frac{\eps}{2}$ is also in one set of the form $E_{\overline{k},i}$ for some $i\neq i_{\overline{k}}$. We conclude that outside of measure at most $\eps$, every point of $E_{\overline{k},i_{\overline{k}}}$ belongs to $d$ other sets, meaning $N(d+1,\eps)$ is well-defined and $N(d+1,\eps)\leq N\left(d,\frac{\eps}{2}\right)\left[\frac{2 }{\eps}\right]$.
\end{proof}
 
  \begin{remark}
\label{rem:blow}
{\rm

Let us detail the blow up argument used in the proof of Theorem \ref{thm:lsc-phi}. If we set
$$
\mu_n:=[\phi(u_n^+)+\phi(u_n^-)]\Ha^{d-1}\lfloor J_{u_n}
$$
and assume that (up to a subsequence)
$$
\mu_n \weakst \mu \qquad\text{weakly* in }\mathcal{M}_b(\Om)
$$ 
for some Radon measure $\mu$ on $\Om$, the conclusion follows if we show that
$$
\mu\ge [\phi(u^+)+\phi(u^-)]\Ha^{d-1}\lfloor J_{u}\qquad\text{as measures on }\Om.
$$
With this aim is sufficient to show that
\begin{equation}
\label{eq:RN}
\frac{d\mu}{d\Ha^{d-1}}(x)\ge [\phi(u^+(x))+\phi(u^-(x))]\qquad\text{for $\Ha^{d-1}$-a.e. $x\in J_u$},
\end{equation}
where $\frac{d\mu}{d\Ha^{d-1}}$ denotes the Radon-Nykodim derivative of $\mu$ with respect to $\Ha^{d-1}$ (restricted to $J_u$).
\par
Let us assume (up to subsequences) that
$$
\lambda_n:=\Ha^{d-1}\lfloor J_{u_n} \weakst \lambda  \qquad\text{weakly* in }\mathcal{M}_b(\Om),
$$
and that
$$
|e(u_n)|\,dx\weakst f\,dx\qquad\text{weakly* in }\mathcal{M}_b(\Om),
$$
where $f\in L^1(\Om)$ (this is possible since $(e(u_n))_{n\in\N}$ is bounded in $L^2$).
\par
Let $x\in J_u$ be such that
$$
\frac{d\mu}{d\Ha^{d-1}}(x)=\lim_{r\to 0}\frac{\mu(Q_{x,r})}{r^{d-1}},
\qquad
\lim_{r\to 0}\frac{\lambda(Q_{x,r})}{r^{d-1}}<+\infty,
\qquad
\lim_{r\to 0}\frac{1}{r^{d-1}}\int_{Q_r(x)}|f|\,dx=0,
$$
and (having choosen the axis so that $\nu_u(x)=e_d$), for $r\to 0^+$
$$
u(x+r \cdot) \to u^+(x)1_{Q_1^+}+u^-(x)1_{Q_1^-}\qquad\text{strongly in }L^1(Q_1;\R^d).
$$
Since $\Ha^{d-1}$-a.e. $x\in J_u$ satisfies these properties, it suffices to concentrate on such points to prove inequality \eqref{eq:RN}.
\par
Let $r_k\to 0$ be such that
$$
\mu(\partial Q_{x,r_k})=\lambda(\partial Q_{x,r_k})=0.
$$
Since by weak convergence and the relation above we have $\mu_n(Q_{x,r_k})\to \mu(Q_{x,r_k})$, and similarly for $\lambda$, we can choose $n_k\nearrow +\infty$ such that 
$$
\mu(Q_{x,r_k})\ge \mu_{n_k}(Q_{x,r_k})-\frac{r_k^{d-1}}{k},
\qquad
\lambda(Q_{x,r_k})\ge \lambda_{n_k}(Q_{x,r_k})-\frac{r_k^{d-1}}{k},
$$
and
$$
\int_{Q_{x,r_k}}|f|\,dx\ge \int_{Q_{x,r_k}}|e(u_{n_k}|\,dx-\frac{r_k^{d-1}}{k}.
$$
Moreover, setting $v_k(y):=u_{n_k}(x+r_k y)$ we can assume also
$$
v_k \to u^+(x)1_{Q_1^+}+u^-(x)1_{Q_1^-}\qquad\text{strongly in }L^1(Q_1;\R^d).
$$
We get
$$
\int_{Q_1}|e(v_k)|\,dx=\frac{1}{r_k^{d-1}} \int_{Q_{x,r_k}}|e(u_{n_k}|\,dx\le \frac{1}{r_k^{d-1}}\int_{Q_{x,r_k}}|f|\,dx+\frac{1}{k}\to 0
$$
and
$$
\Ha^{d-1}(J_{v_k})=\frac{1}{r_k^{d-1}}\Ha^{d-1}(J_{u_{n_k}}\cap Q_{x,r_k})=\frac{\lambda_{n_k}(Q_{x,r_k})}{r_k^{d-1}}\le \frac{\lambda(Q_{x,r_k})}{r_k^{d-1}}+\frac{1}{k}\to c<+\infty,
$$
so that, using the lower semicontinuity \eqref{eq:blow-sci} concerning functions on the unit square (and to which the proof of the Theorem has been reduced)
\begin{align*}
\frac{d\mu}{d\Ha^{d-1}}(x)&=\lim_{k\to+\infty}\frac{\mu(Q_{x,r_k})}{r_k^{d-1}}\ge \liminf_{k\to+\infty}\frac{\mu_{n_k}(Q_{x,r_k})}{r_k^{d-1}}\\
&=\liminf_{k\to+\infty} \int_{J_{v_k}}[\phi(v_k^+)+\phi(v_k^-)]\,d\Ha^{d-1}\ge
\phi(u^+(x))+\phi(u^-(x))
\end{align*}
and \eqref{eq:RN} follows.
 
}
\end{remark}
\section{Existence of minimizers: proof of Theorem \ref{main1}}
\label{sec_proof-main}

We are now in a position to prove the first main result of the paper.
\begin{proof}[Proof of Theorem \ref{main1}]
Let $(E_n,u_n)_{n\in \N}$ be a minimizing sequence: since the function $f$ is not identically equal to $+\infty$, and in view of Remark \ref{rem:nonempty}, there exists $C>0$ such that
$$
\J(E_n,u_n)\leq C.
$$
Since $u_n=0$ a.e. on $E_n$ we may write
$$
\int_{\partial^*E_n}|u_n^+|^2\,d\Ha^{d-1}+\int_{J_{u_n}\setminus \partial^*E_n}[|u_n^+|^2+|u_n^-|^2]\,d\Ha^{d-1}=
\int_{J_{u_n}}[|u_n^+|^2+|u_n^-|^2]\,d\Ha^{d-1}
$$
so that we infer
$$
\Ha^{d-1}(\partial^*E_n)\leq C\qquad\text{and}\qquad \int_\Om |e(u_n)|^2\,dx+\Ha^{d-1}(J_{u_n})+\int_{J_{u_n}}[|u_n^+|^2+|u_n^-|^2]\,d\Ha^{d-1}\leq C.
$$
Notice that
\begin{align*}
|E(u_n)|(\Om')&=\int_{\Om'}|e(u_n)|\,dx+\int_{J_{u_n}}|u_n^+-u_n^-|\,d\Ha^{d-1}\\
&\leq \int_{\Om'\setminus \Om}|e(V)|\,dx+
\int_\Om |e(u_n)|\,dx+\int_{J_{u_n}}[|u_n^+|+|u_n^-|]\,d\Ha^{d-1}\\
&\leq \int_{\Om'\setminus \Om}|e(V)|\,dx+\frac{1}{2}\left[|\Om|+\int_\Om |e(u_n)|^2\,dx+ 2\Ha^{d-1}(J_{u_n})+\int_{J_{u_n}}[|u_n^+|^2+|u_n^-|^2]\,d\Ha^{d-1}\right]
\leq \tilde C,
\end{align*}
for some $\tilde C>0$. Moreover, thanks to Theorem \ref{thm:embedding} applied to $u-V$ we may assume also that
\begin{equation}
\label{eq_bound-2d}
\|u_n\|_{L^{\frac{2d}{d-1}}(\Om')}\leq \tilde C.
\end{equation}
By the compactness result in $SBD$ (see Theorem \ref{thm:sbd}), there exist a subsequence $(u_{n_k})_{k\in\N}$ and $u\in SBD(\Om')$ with $u=V$ on $\Om'\setminus \Om$ and such that
\begin{equation}
\label{eq_s1}
u_{n_k}\to u\qquad\text{strongly in }L^1(\Om';\R^d),
\end{equation}
\begin{equation}
\label{eq_weu}
e(u_{n_k})\rightharpoonup e(u)\qquad\text{weakly in }L^2(\Om';M^{d\times d}_{sym}),
\end{equation}
and
\begin{equation*}
\label{eq_lsc-jump}
\Ha^{d-1}(J_{u})\leq \liminf_{k\to+\infty} \Ha^{d-1}(J_{u_{n_k}}).
\end{equation*}
Concerning the sets $E_{n_k}$, we may assume, up to a further subsequence if necessary, that there exists a set of fine perimeter $E\subseteq \Om$ such that
\begin{equation}
\label{eq_s-ob}
1_{E_{n_k}}\to 1_E\qquad\text{strongly in }L^1(\R^d)
\end{equation}
with
$$
\Ha^{d-1}(\partial^*E)\leq \liminf_{k\to+\infty} \Ha^{d-1}(\partial^*E_{n_k}).
$$
In particular we get
\begin{equation}
\label{eq_lsc-f}
f(|E|)\leq \liminf_{n\to+\infty} f(|E_n|).
\end{equation}
Let us prove that 
\begin{equation}
\label{eq_u-adm}
 (E,u)\in \as(V) .
\end{equation}
In view of \eqref{eq_bound-2d} we infer that $u\in L^{\frac{2d}{d-1}}(\Om';\R^d)$ so that in particular $u\in L^2(\Om';\R^d)$. Moreover $u=V$ on $\Om'\setminus \Om$, while $u=0$ a.e. on $E$ thanks to \eqref{eq_s1} and \eqref{eq_s-ob}.
\par
Since the divergence constraint is intended in the sense of distributions on $\Om$, this passes easily to the limit thanks to \eqref{eq_s1}. Moreover, in view of Theorem \ref{thm:closure} we deduce
$$
u^\pm \perp \nu_u \qquad\text{on }J_u.
$$
In particular this entails
$$
u^+ \perp \nu_E \qquad\text{on }\partial^*E\cap \Om,
$$
since for $x\in \partial^*E$ we have either $x\in J_u$ or $u^+(x)=0$. We conclude that the non-penetration  constraint for the velocity field holds on $\partial^*E$ and on $J_u\setminus \partial^*E$, so that \eqref{eq_u-adm} holds true.
\par
Let us prove the pair $(E,u)$ is a minimizer for the problem. Thanks to \eqref{eq_weu} we get
$$
\int_{\Om'} |e(u)|^2\,dx\leq \liminf_{k\to+\infty} \int_{\Om'} |e(u_{n_k})|^2\,dx,
$$
while in view of  Theorem \ref{thm:lsc-phi} we have that
$$
\int_{J_u}[|u^+|^2+|u^-|^2]\,d\Ha^{d-1}\leq \liminf_{k\to+\infty}
\int_{J_{u_{n_k}}}[|u_{n_k}^+|^2+|u_{n_k}^-|^2]\,d\Ha^{d-1},
$$
which entails 
\begin{equation}
\label{eq_rob-sci}
\begin{split}
&\int_{\partial^*E}|u^+|^2\,d\Ha^{d-1}
+\int_{J_u\setminus \partial^*E}[|u^+|^2+|u^-|^2]\,d\Ha^{d-1}\\
&\leq \liminf_{k\to+\infty}\left[ \int_{\partial^* E_{n_k}}[u_{n_k}^+|^2\,d\Ha^{d-1}+\int_{J_{u_{n_k}}\setminus \partial^* E_{n_k}}[|u_{n_k}^+|^2+|u_{n_k}^-|^2]\,d\Ha^{d-1}\right]
\end{split}
\end{equation}
since $u=0$ a.e. on $E$ and $u_{n_k}=0$ a.e. on $E_{n_k}$.
\par
Let us prove that
\begin{equation}
\label{eq_perturbe}
2\Ha^{d-1}(J_u\setminus\partial^*E)+\Ha^{d-1}(\partial^*E)\leq\liminf_{k\to+\infty}\left(2\Ha^{d-1}(J_{u_{n_k}}\setminus\partial^*E_{n_k})+\Ha^{d-1}(\partial^*E_{n_k})\right).
\end{equation}
 Let us choose $h\in \R^d$ such that
\begin{multline*}
\Ha^{d-1}(\{x\in \partial^*E\cup J_u\,:\, u^+(x)=h\})=
\Ha^{d-1}(\{x\in \partial^*E\cup J_u\,:\, u^-(x)=h\})\\
=\Ha^{d-1}(\{x\in \partial^*E_{n_k}\cup J_{u_{n_k}}\,:\, u^+_{n_k}(x)=h\})=
\Ha^{d-1}(\{x\in \partial^*E_{n_k}\cup J_{u_{n_k}}\,:\, u^-_{n_k}(x)=h\})=0.
\end{multline*}
This is possible because for example the sets $\{x\in \partial^*E\cup J_u\,:\, u^+(x)=h\}$ are disjoint as $h$ varies, and similarly for the other sets. In particular, setting
\[
v^h:=u+h1_E\qquad\text{and}\qquad v^h_{n_k}:=u_{n_k}+h1_{E_{n_k}}
\]

we have
\begin{equation*}
\label{eq_claim-jump}
J_{v^h}=J_u\cup J_{1_E}=\partial^*E\cup J_u\qquad\text{and}\qquad J_{v^h_{n_k}}=J_{u_{n_k}}\cup J_{1_{E_{n_k}}}=\partial^*{E_{n_k}}\cup J_{u_{n_k}}
\end{equation*}
up to $\Ha^{d-1}$-negligible sets. 
  If we apply Theorem \ref{thm:lsc-phi} with the choice $\phi_h(s)=1_{\{s\neq h\}}$ to the sequence $(v^h_{n_k})_{k\in\N}$ 
we get
\begin{equation}
\label{eq:lsc2jump}
\begin{split}
\Ha^{d-1}(\partial^*E)+2\Ha^{d-1}(J_u\setminus \partial^*E)&=\int_{J_{v^h}}[\phi_h((v^h)^+)+\phi_h((v^h)^-)]d\Ha^{d-1}\\
&\leq\liminf_{k\to+\infty} \int_{J_{v^h_{n_k}}}[\phi_h((v^h_{n_k})^+)+\phi_h((v^h_{n_k})^-)]d\Ha^{d-1}\\
&=\liminf_{k\to+\infty}\left[ \Ha^{d-1}(\partial^*E_{n_k})+2\Ha^{d-1}(J_{u_{n_k}}\setminus \partial^* E_{n_k})\right]
\end{split}
\end{equation}
so that \eqref{eq_perturbe} holds true.

\par
Gathering \eqref{eq_weu}, \eqref{eq_rob-sci}, \eqref{eq_lsc-f} and \eqref{eq_perturbe}, we deduce
$$
\J(E,u)\leq \liminf_{k\to+\infty} \J(E_{n_k},u_{n_k})
$$
so that, taking into account \eqref{eq_u-adm}, the pair $(E,u)$ is a minimizer of the main problem \eqref{mainpb}, and the proof is concluded.
\end{proof}

\section{Regularity  of  two-dimensional minimizers: proof of Theorem \ref{main2}}\label{sec-reg}

This section is devoted to the proof Theorem \ref{main2} concerning the regularity of minimizers in dimension two.

As mentioned in the Introduction, the general strategy used by De Giorgi, Carriero and Leaci for the Mumford-Shah problem in \cite{DGCL} faces the new difficulties given by the vectorial context,  considered in \cite{CFI_19,CCI_19} in connection to the Griffith fracture problem, and also
by extra conditions  proper to our problem, that is incompressibility and non-penetration for the velocity fields. 
We follow the main lines of \cite{CFI_19,CCI_19}: however technical difficulties allow us to deal only with dimension $2$  (see point (a) below).
\par

Since our drag problem involves pairs $(E,u)$ as admissible configurations, and some points of $\partial^*E$ may not be jump points of $u$, it will be useful to deal with pairs $(J,u)$, where $J$ is a rectifiable set and $u$ is a function whose jumps are contained (up to $\Ha^1$-negligible sets) in $J$ and satisfy the constraints of zero divergence and non-penetration. More precisely we formulate the following definition.

\begin{definition}[\bf The class ${\mathcal V}$]
\label{def:nubar}
Let $\Om \subseteq \R^2$ be an open set. We say that $(J,u)\in \mathcal{V}(\Om)$ if $J\subseteq \Om$ is a rectifiable set, and $u\in SBD(\Om)$ is such that ${\rm div}\,u=0$ in the sense of distributions in $\Om$, $\Ha^1(J_u\setminus J)=0$ and $u^\pm_{|J}\cdot \nu_J=0$ $\Ha^1$-a.e. on $J$.
\end{definition}

\noindent
The structure of the section is the following. 
\begin{itemize}
\item[(a)] In Section \ref{sec:smooth} we prove a fundamental approximation lemma (Smoothing Lemma \ref{lem:smoothing1}), which allows us to approximate every $(J,u)\in {\mathcal V}(Q_1)$ with $\Ha^1(J)$ small by a configuration $(J\setminus Q_r,v)\in {\mathcal V}(Q_1)$, where $v$ is a Sobolev function in the slightly smaller square $Q_r$ with a control on the energy. The idea is that the jumps of $u$ in $Q_r$ are ``smoothed out'', giving rise to the function $v$ which preserves the divergence free constraint together with the non-penetration condition. This result is inspired by \cite{CCI_19}, and it is here that the dimension two is fundamental.
\item[(b)] In Section \ref{sec:griffith} we prove regularity for local minimizers of a Griffith functional 
$$
G(J,u):=\int_{\Om}|e(u)|^2dx+\Ha^1(J),
$$
defined on pairs $(J,u) \in {\mathcal V}(\Om)$. The kind of local minimality considered is very weak, and inspired by the kind of competitors
that can be constructed thanks to the Smoothing Lemma \ref{lem:smoothing1}. The key result to get regularity is given by the decay estimate contained in Proposition \ref{prop:decay}. 
\par\noindent
Regularity for minimizers of the Griffith energy is then used in Section \ref{sec:proof2} to prove Theorem \ref{main2}, that is to show the regularity of minimizers of the drag problem.
\item[(c)] Finally, motivated by the regularity result of Theorem \ref{main2}, in Section \ref{sec:strong} we describe a different relaxation of the drag problem which involves topologically closed obstacles and Sobolev velocities: the regularity result can be used to prove that such a formulation is well posed in dimension two.
\end{itemize}

\subsection{The smoothing lemma}
\label{sec:smooth}
We fix a standard radial,  smooth, nonnegative mollifier $\rho$ with support in a disc of radius $1/8$ and denote 
$$
\rho_\delta(x):=\delta^{-2}\rho\left(\frac x\delta\right).
$$

The main result of the section is the following smoothing lemma which is in the spirit of  \cite{CCI_19}. 

 \begin{lemma}[\bf Smoothing Lemma]
  \label{lem:smoothing1}
 There exist $C,\eta>0$ such that for any $(J,u) \in \mathcal{V}(Q_1)$ with $\Ha^1(J)<\eta$, then letting $\delta:=\Ha^1(J)^{\frac{1}{2}}$ 
 there exist $r\in ]1-\delta^{\frac{1}{2}},1[$ and $v\in SBD(Q_1) \cap H^1(Q_r)$ such that the following items hold true.
\begin{itemize}
\item[(a)] $\Ha^0(J\cap \partial Q_r)=0$ and for every $0<s<r$
$$
\Ha^1(J\cap (Q_r\setminus Q_{r-s}))\le C \delta^{\frac{3}{2}} s.
$$
\item[(b)] $\{v\neq u\}\subseteq Q_r$ and $(J\setminus Q_r, v)\in \mathcal{V}(Q_1)$.
\item[(c)] It holds
$$
\|e(v)\|_{L^2(Q_1)}\le (1+C\delta^\frac{1}{6})\|e(u)\|_{L^2(Q_1)}.
$$

\item[(d)] There exists a cut-off function $\varphi\in C^\infty(Q_r,[0,1])$ with
$\varphi=0$ on $Q_r\setminus Q_{r- \delta}$, $\varphi=1$ on $Q_{r- 4\delta}$, and such that
$$
\|e(v)- \varphi \rho_{\delta}*e(u)\|_{L^2(Q_r)} \le C\delta^{\frac{1}{6}} \|e(u)\|_{L^2(Q_1)}.
$$
\end{itemize}

\end{lemma}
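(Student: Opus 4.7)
I would adapt the two-dimensional Korn-type $SBD$-to-Sobolev approximation of \cite{CCI_19} and modify its output so as to enforce simultaneously the incompressibility and the non-penetration constraints. The restriction to dimension two is essential precisely at this approximation step.

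\textit{Choice of $r$ (part (a)).} For each $s\in(0,\delta^{1/2})$, Fubini's theorem yields
\[
\int_{1-\delta^{1/2}}^{1}\Ha^1\bigl(J\cap(Q_r\setminus Q_{r-s})\bigr)\,dr\le 2s\,\Ha^1(J)=2s\delta^2,
\]
because each $x\in J$ contributes to the integrand on a set of $r$-values of length at most $2s$. A Chebyshev argument, applied uniformly in $s$ to the function $r\mapsto \sup_{0<s<r}s^{-1}\Ha^1(J\cap(Q_r\setminus Q_{r-s}))$, produces some $r\in(1-\delta^{1/2},1)$ verifying the bound in (a); a standard almost-everywhere argument on $\Ha^1\lfloor J$ secures the additional property $\Ha^0(J\cap \partial Q_r)=0$.

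\textit{Sobolev approximation (main part of (c), (d)).} For this $r$, I would invoke the two-dimensional construction underlying the arguments of \cite{CCI_19} to produce $\bar v\in SBD(Q_1)\cap H^1(Q_r)$ with $\{\bar v\neq u\}\Subset Q_r$ and the quantitative bound
\[
\|e(\bar v)-\varphi\,\rho_\delta\ast e(u)\|_{L^2(Q_r)}\le C\delta^{1/6}\|e(u)\|_{L^2(Q_1)}
\]
for a cutoff $\varphi$ as in (d). Concretely, the construction decomposes $J\cap Q_r$ into pieces at scale $\delta$, removes a tubular neighborhood of them in which $u$ is replaced by suitable rigid motions selected via a Poincar\'e--Korn inequality for $SBD$ fields with small jump set, and glues back with the mollification $\rho_\delta\ast u$ on the complement by a partition of unity. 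The exponent $1/6$ arises from balancing the width of the tubular neighborhood against the scale $\delta=\Ha^1(J)^{1/2}$ of the jump.

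\textit{Enforcing incompressibility and non-penetration (giving (b) and the remaining bounds).} The field $\bar v$ need not be divergence-free, but $g:=\text{div}\,\bar v$ is supported in $Q_r$ and has zero integral, because $\bar v=u$ near $\partial Q_r$ and $\text{div}\,u=0$. I would then use the Bogovskii operator on $Q_r$ to solve $\text{div}\,w=g$ with $w\in H^1_0(Q_r;\R^2)$ and $\|e(w)\|_{L^2}\le C\|g\|_{L^2}$. Since $\text{div}\,u=0$ distributionally, $\|g\|_{L^2}$ is controlled by the approximation error above, hence by $C\delta^{1/6}\|e(u)\|_{L^2}$. Setting $v:=\bar v-w$ then produces a divergence-free Sobolev field on $Q_r$ matching $u$ on $Q_1\setminus Q_r$; (c) and (d) follow by the triangle inequality. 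For (b): since $v=u$ on $Q_1\setminus Q_r$ and $J\cap\partial Q_r=\emptyset$ by (a), the jump set of $v$ is contained in $J\setminus Q_r$ with matching traces, so the non-penetration condition is inherited from $u$, giving $(J\setminus Q_r,v)\in\mathcal{V}(Q_1)$.

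\textit{Main obstacle.} The core difficulty is the Sobolev approximation step. The two-dimensional refinement of \cite{CFI_19, CCI_19} produces a bound small enough ($\sim\delta^{1/6}$) that the Bogovskii correction can be absorbed without deteriorating the final estimate and without creating new jumps inside $Q_r$. In dimension $d\ge 3$ the analogous construction introduces small new jumps inside $Q_r$ on which the non-penetration condition is a priori violated, and the divergence error is not sufficiently small to be corrected while preserving the energy bound: this is exactly the obstruction referred to in the Introduction.
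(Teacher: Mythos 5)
Your overall architecture is the same as the paper's: select a good radius $r$, run a \cite{CCI_19}-type approximation to get a Sobolev field on $Q_r$ with the mollified-strain error of order $\delta^{1/6}$, then correct the (small) divergence by a Bogovskii/Ne\v{c}as field in $H^1_0(Q_r)$ and use equality of traces on $\partial Q_r$ to keep $(J\setminus Q_r,v)$ admissible and the non-penetration condition inherited from $u$; this is precisely Steps 2--4 of the paper's proof, and your estimate $\|{\rm div}\,\bar v\|_{L^2}\lesssim\delta^{1/6}\|e(u)\|_{L^2(Q_1)}$ via ${\rm div}\,\bar v={\rm tr}\,(e(\bar v)-\varphi\rho_\delta*e(u))$ is the paper's argument verbatim.

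There is, however, a genuine gap in your choice of $r$: you select it using only the measure $\Ha^1\lfloor J$, whereas the paper (Step 2) selects $r$ through the combined measure $\mu(E)=\Ha^1(J\cap Q_E)/\Ha^1(J)+\int_{Q_E}|e(u)|^2dx/\int_{Q_1}|e(u)|^2dx$ and a Vitali covering argument, so that besides (a) one also gets $\int_{Q_r\setminus Q_{r-4\delta r}}|e(u)|^2dx\lesssim\delta^{1/2}\int_{Q_1}|e(u)|^2dx$. This extra property is not cosmetic. To preserve the trace of $u$ on $\partial Q_r$ (which you need both for the zero mean of ${\rm div}\,\bar v$ and to avoid creating a jump on $\partial Q_r$), the construction must mollify at scales shrinking to zero in a layer of width $\sim\delta$ adjacent to $\partial Q_r$; the resulting partition-of-unity gluing errors there, and the term $\|e(v)\|_{L^2(Q_r\setminus Q_{r-\delta})}$ in (d) where $\varphi=0$, are controlled only by $\|e(u)\|_{L^2}$ of that boundary layer, with no small prefactor. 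If a fixed fraction of the elastic energy of $u$ sits in $Q_r\setminus Q_{r-\delta}$ --- which your Fubini/Chebyshev selection does not rule out --- then (c) and (d) fail at your $r$. The fix is exactly the paper's: run the averaging/Vitali selection for jump density and elastic energy simultaneously (your ``uniform in $s$'' Chebyshev step is itself this weak-type/Vitali argument, so nothing new is needed beyond adding the energy term to the measure). Two minor points: the construction gives $\{\bar v\neq u\}\subset Q_r$ with equal traces on $\partial Q_r$, not $\{\bar v\neq u\}\Subset Q_r$, so the zero mean of ${\rm div}\,\bar v$ should be deduced from trace equality together with $\int_{\partial Q_r}u\cdot\nu\,d\Ha^1=0$; and the Bogovskii bound naturally controls $\|\nabla q\|_{L^2}$, which suffices, rather than $\|e(q)\|_{L^2}$ directly as you wrote.
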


\begin{proof}

The proof follows the strategy introduced in \cite{CCI_19}, and some parts will be referred directly to that paper. However, since our conclusion is slightly different, we prefer to develop some computations in detail. We will use the notation $a\lesssim b$ when $a\le Cb$ for some dimensional constant $C$.
\par
We divide the proof in several steps.

\vskip10pt\noindent
{\bf Step 1: Subdivision in small squares.}
Let us set
$$
N:=1+\left[\Ha^1(J)^{-\frac{1}{2}}\right],
$$
where $[\cdot]$ denotes the integer part. In the following we will assume that $\Ha^1(J)$ is arbitrary small, so that $N$ is arbitrarily large. For convenience in the construction, we will set $\delta=1/N { \le \Ha^1(J)^{\frac{1}{2}}}$, which (mildly) differs from the choice of the statement: { yet} since $\delta$ is asymptotically equivalent to $\Ha^1(J)^{\frac{1}{2}}$, the mismatch does not affect the validity of the conclusion.
\par
For $r\in ]1-\delta^{\frac{1}{2}}, 1[$ { and each $k\ge -2$}, let us set 
$$
\delta_k:=\frac{\delta r}{2^k}\qquad\text{and}\qquad r_k=\left(N-\frac{1}{2^k}\right)\delta \,.  
$$ 
Then we consider a partition { (up to a negligible set)} of $Q_r$ into cubes obtained by filling $Q_{r_0}$ with cubes of side $\delta_0$ and denoted by $(\tilde{q}_{0,j})_j$, and then each $Q_{r_k}\setminus Q_{r_{k-1}}$ with cubes of side $\delta_k$ and denoted $(\tilde{q}_{k,j})_j$ (note that there is only one way to do this).

\par
For any square $q=z+[-t,t]^2$, we write 
$$
q':=z+\left[-\frac{8}{7}t,\frac{8}{7}t\right]^2\qquad\text{and}\qquad q'':=(q')'.
$$ 
We will set 
$$
q_{k,j}:=(\tilde{q}_{k,j})'.
$$
We may notice that with our choices
\begin{equation}
\label{eq:squaresk}
\forall k\ge 1\,:\, q_{k,j}''\Subset Q_{r_{k+1}}\setminus Q_{r_{ k-2}},
\end{equation}
and $\{q_{k,j}''\}_{k,j}$ is a covering of $Q_r$ with a fixed finite number of overlapping: indeed each $q_{k,j}''$ meets at most $8$ neighbours $q_{p,i}''$, and they all verify $|k-p|\leq 1$, meaning $\delta_k/\delta_p\in \left\{\frac{1}{2},1,2\right\}$. This is because the factor $\frac{8}{7}$ above is chosen such that $\left(\frac{8}{7}\right)^3<\frac{3}{2}$.

\vskip10pt\noindent{\bf Step 2: Choice of the square $Q_r$.}
We now make a convenient choice of $r$ such that the density of $J$ near $\partial Q_r$ is small, following an approach similar to \cite[Theorem 2.1]{CFI_17}.
\par
We claim that there exist $C,\eta>0$ such that for $\delta<\eta$ we can choose $r\in ]1-\sqrt{\delta},1[$ with $\Ha^0(J\cap \partial Q_r)=0$, 
\begin{equation}
\label{eq:r0}
\forall\,s\in ]0,r[\,:\,\Ha^1(J\cap (Q_r\setminus Q_{r-s}))\le C \delta^{\frac{3}{2}} s
\end{equation}
and
\begin{equation}
\label{eq:r2}
\int_{Q_r\setminus Q_{r_{-2}}}|e(u)|^2\,dx< C\delta^{\frac{1}{2}} \int_{Q_1}|e(u)|^2\,dx.
\end{equation}

Consider indeed the measure $\mu$ on $[0,1]$ defined as
\[
\mu(E):=\frac{\Ha^1(J\cap Q_E)}{\Ha^1(J)}+\frac{\int_{Q_E}|e(u)|^2\,dx}{\int_{Q_1}|e(u)|^2\,dx},
\]
where $Q_E:=\cup_{r\in  E}\partial Q_r$ is the cubic shell associated to $E\subset [0,1]$. 
It suffices to prove that we can find $r\in ]1-\delta^{\frac{1}{2}}, 1[$ such that 
\begin{equation}
\label{eq:hszero}
\Ha^0(J\cap \partial Q_r)=0, 
\end{equation}
and, denoting  $I_r^s:=[r-s,r[$ for $0<s<r$,  
\begin{equation}
\label{eq:vitali}
 \mu(I_r^s)\le \hat C\delta^{-\frac{1}{2}} s,
\end{equation}
where $\hat C>0$ is a suitable constant which we fix below.
 Indeed, if $\delta$ is small enough this implies that (recall that $\Ha^1(J)$ behaves like $\delta^2$)
$$
\Ha^1(J\cap(Q_r\setminus Q_{r-s}))\le  \Ha^1(J) \mu(I_r^s) \le \hat C\delta^{\frac{3}{2}} s
$$
and
$$
\int_{Q_r\setminus Q_{r-4\delta r}}|e(u)|^2\,dx\leq \hat C\delta^{-\frac{1}{2}} (4\delta r)\int_{Q_1}|e(u)|^2\,dx\le 4\hat C\delta^{\frac{1}{2}} \int_{Q_1}|e(u)|^2\,dx,
$$
so that \eqref{eq:r0} and \eqref{eq:r2} follow by choosing $C:=4\hat C$.

\par
Let $I_1$ be the union of all intervals that do not satisfy \eqref{eq:vitali}. If $(I_{r_i}^{s_i})$ is a {\it Vitali covering} of $I$, then
\[
2=\mu([0,1])\geq \sum_{i}\mu(I_{r_i}^{s_i})>\hat C \delta^{-\frac{1}{2}}\sum_i |I_{r_i}^{s_i}|=
 \frac{\hat C\delta^{-\frac{1}{2}}}{5}\sum_i |5I_{r_i}^{s_i}| \geq \frac{\hat C\delta^{-\frac{1}{2}}}{5}|I_1|\,
\]
hence $|I_1|< \frac{10}{\hat C}\delta^{\frac{1}{2}}$.
\par
 Let $I_2:=\pi_x(J) \cup \pi_y(J)$, where $\pi_x,\pi_y$ denote the projection on the coordinate axis: we have asymptotically $|I_2|\le 2\delta^2$. If $C>10$, this implies that for $\delta$ small enough
$$
]1-\sqrt \delta,1[\setminus (I_1\cup I_2)\not=\emptyset,
$$
which yields the existence of $r$ which verifies claims \eqref{eq:hszero} and \eqref{eq:vitali}.

\vskip10pt\noindent{\bf Step 3: A first approximation.}
In view of \eqref{eq:r0} and of \eqref{eq:squaresk}, for every $k\geq 1$ we have
$$
\Ha^1(J_u\cap q_{k,j}'')\lesssim \delta^{\frac{3}{2}}\delta_k, 
$$
while if $\delta$ is small enough (recall that $\Ha^1(J)$ behaves like $\delta^2$ and $r\in]1-\delta^{\frac{1}{2}},1[$)
$$
\Ha^1(J_u\cap q_{0,j}'')\le \Ha^1(J_u)\lesssim \delta \delta_0. 
$$
This means that the jump set of $u$ in every cube of the constructed subdivision is arbitrarily small compared to its sides.

\par
Thanks to  
{ \cite[Proposition~3]{CCF16},}
 and taking into account the preceding inequalities , for every $(k,j)$ there is a set $\om_{k,j}\subset q_{k,j}'$ and an affine function $a_{k,j}$ with $e(a_{k,j})=0$, such that
\begin{equation}
\label{eq:omk}
|\om_{k,j}|\lesssim\delta_k\Ha^1(J_u\cap q_{k,j}'') \lesssim \delta\delta_k^2
\end{equation}
\begin{equation}
\label{eq:L4}
\int_{q_{k,j}'\setminus \om_{k,j}}|u-a_{k,j}|^{4}\,dx\lesssim\left(\delta_k\int_{q_{k,j}''}|e(u)|^2\,dx\right)^2,
\end{equation}
and the function $v_{k,j}:=u+(a_{k,j}-u)1_{\om_{k,j}}$ verifies
\begin{equation}
\label{eq:wkj}
\begin{split}
\int_{q_{k,j}}|e(\rho_{\delta_k}*v_{k,j})-\rho_{\delta_k}*e(u)|^2\,dx&\lesssim\left(\frac{\Ha^1(J_u\cap q_{k,j}'')}{\delta_k}\right)^\frac{1}{3}\int_{q_{k,j}''}|e(u)|^2\,dx\\
&\lesssim\delta^{\frac{1}{3}}\int_{q_{j,k}''}|e(u)|^2\,dx,
\end{split}
\end{equation}
{ (see~\cite[p.~1389]{CCF16})}
where $\rho$ is the mollifier defined  at the beginning of the section.
\par
Notice that in view of our construction (namely the choice of $r$), we have
\begin{equation}
\label{eq:est}
|\om_{k,j}|\ll |q_{k,j}|,
\end{equation}
and this is where we most use the fact that we are in two dimensions.
\par
We now let $(\varphi_{k,j})$ be a partition of unity associated to the covering $(q_{k,j})$ of $Q_r$ and such that $|\nabla \varphi_{k,j}|\lesssim \frac{1}{\delta_k}$. Let us set
\[
w:=1_{Q_1\setminus Q_r}u+1_{Q_r}\sum_{k,j}\varphi_{k,j}w_{k,j}\qquad\text{where}\qquad w_{k,j}:=\rho_{\delta_k}*v_{k,j}.
\]
 
\par
 
We claim that

\begin{equation}
\label{eq:claimw1}
w\in SBD(Q_1)\cap H^1(Q_r),\qquad \{w\not= u\}\subset Q_r,\qquad  \Ha^1(J_w\setminus J)=0,
\end{equation}

\begin{equation}
\label{eq:claimw2}
\left\|e(w)-\sum_{k,j}\varphi_{k,j}\rho_{\delta_k}* e(u)\right\|_{L^2(Q_r)}\lesssim\delta^\frac{1}{6} \|e(u)\|_{L^2(Q_1)},
\end{equation}
 and
\begin{equation}
\label{eq:claimw3}
\text{the trace of $w$ and $u$ on $\partial Q_r$ coincide.}
\end{equation}
We postpone the proof of these claims to Step 5.
\par
 
Let us set
\[
\varphi:=\sum_{(0,j)\in {\mathcal K}}\varphi_{0,j},
\]
where ${\mathcal K}$ denotes the set of indices such that $q_{0,j}$ has a distance greater than $2\delta r$ from $\partial Q_{r}$.
Since $r\in ]1-\delta^{\frac{1}{2}},1[$, in view of the definition of the set of indices $\mathcal K$, we get that the function $\varphi$ vanishes on $Q\setminus Q_{r-\delta}$ and it is equal to $1$ on $Q_{r-4\delta}$.
\par
We can write
\[
e(w)-\sum_{k,j}\varphi_{k,j}\rho_{\delta_k}* e(u)=\Big[e(w)-\varphi \rho_\delta * e(u)\Big]-\sum_{(k,j)\not\in {\mathcal K}}\varphi_{k,j}\rho_{\delta_k}* e(u).
\]
Thanks to \eqref{eq:r2} we have

\begin{align*}
\left\Vert\sum_{(k,j)\not\in {\mathcal K}}\varphi_{k,j}\rho_{\delta_k}* e(u)\right\Vert_{L^2(Q_r)}^2&=\left\Vert\sum_{(k,j)\not\in {\mathcal K}}\varphi_{k,j}\rho_{\delta_k}* e(u)\right\Vert_{L^2(Q_r\setminus Q_{r-2\delta r})}^2\\
&\lesssim\sum_{(k,j)\not\in {\mathcal K}}\left\|\varphi_{j,k}\rho_{\delta_k}*e(u)\right\|^2_{L^2(Q_1\setminus Q_{r-2\delta r})}\\
&\lesssim\|e(u)\|^2_{L^2(Q_1\setminus Q_{r-3\delta r})}\lesssim \delta^{\frac{1}{2}}\|e(u)\|^2_{L^2(Q_1)},
\end{align*}
so that in view of \eqref{eq:claimw2} we conclude
\begin{equation}
\label{eq:est-convw}
\Vert e(w)-\varphi \rho_\delta * e(u)\Vert_{L^2(Q_r)}\lesssim\delta^{\frac{1}{6}} \Vert e(u)\Vert_{L^2(Q_1)}.
\end{equation}
Moreover we may write
\begin{align*}
\|e(w)\|_{L^2(Q_r)}&\le \Vert \varphi \rho_\delta * e(u)\Vert_{L^2(Q_r)}+\Vert e(w)-\varphi \rho_\delta * e(u)\Vert_{L^2(Q_r)}\\
&=\Vert \varphi \rho_\delta * e(u)\Vert_{L^2(Q_{r-\delta})}+\Vert e(w)-\varphi \rho_\delta * e(u)\Vert_{L^2(Q_r)}\\
&\le \|e(u)\|_{L^2(Q_{1})}+\Vert e(w)-\varphi \rho_\delta * e(u)\Vert_{L^2(Q_r)},
\end{align*}
so that taking into account \eqref{eq:est-convw} we deduce
\begin{equation}
\label{eq:wpointc}
\|e(w)\|_{L^2(Q_1)}\le (1+C\delta^{\frac{1}{6}}) \Vert e(u)\Vert_{L^2(Q_1)},
\end{equation}
where $C>0$.

\vskip10pt\noindent
{\bf Step 4: Enforcing the divergence free constraint.} By admissibility, $u$ is divergence free in the sense of distributions in $Q_1$, so that the trace of $e(u)$ is zero in $Q_1$, while
\begin{equation}
\label{eq:trace-zero}
\int_{\partial Q_r}u\cdot \nu\,d\Ha^1=0,
\end{equation}
where $\nu$ is the outward normal vector of $Q_r$, and $u$ denotes the trace on $\partial Q_r$ ($J$ does not intersect $\partial Q_r$ by construction).
\par
Recalling that $w\in H^1(Q_r)$, we may write thanks to \eqref{eq:est-convw}
$$
\|\text{div}\,w\|_{L^2(Q_r)}=\|\text{Tr}(e(w))\|_{L^2(Q_r)}=\|\text{Tr}(e(w)-\varphi \rho_\delta*e(u))\|_{L^2(Q_r)}\lesssim  \delta^{\frac{1}{6}} \Vert e(u)\Vert_{L^2(Q_1)}.
$$
By \eqref{eq:claimw3} the trace of $u$ on $\partial Q_r$ coincides with that of $w$, so that from \eqref{eq:trace-zero} we deduce
\[
\int_{Q_r} \text{div}\, w \,dx=0.
\]
Using { a classical result (recorded at the end of this proof in Lemma~\ref{lem:necas})}, there exists a vector field $q\in H^1_0(Q_r)$ such that
\begin{equation}
\label{eq:bound-q}
\text{div}\,q=\text{div}\,w \qquad\text{and}\qquad  \Vert \nabla q\Vert_{L^2(Q_r)}\lesssim\Vert\text{div}\,w\Vert_{L^2(Q_r)}\lesssim \delta^\frac{1}{6} \|e(u)\|_{L^2(Q_1)}.
\end{equation}

\par
Let 
$$
v:=
\begin{cases}
w-q&\text{in }Q_r\\
u &\text{in }Q_1\setminus Q_r,
\end{cases}
$$
and let us check that $v$ satisfies the conclusions of the lemma. 
\par
The choice of $r$ given by Step 2 yields immediately point (a). Clearly $v\in SBD(Q_1)\cap H^1(Q_r)$ with $\{v\not=u\}\subseteq Q_r$. Moreover, since the trace of $w-q$ and $u$ coincide on $\partial Q_r$, we get  $div\,v=0$ in the sense of distributions in $Q_1$, so that point (b) is proved.
Points (c) and (d) follow from the corresponding properties for $w$ (see \eqref{eq:est-convw} and \eqref{eq:wpointc}) taking into account that the correction term $q$ has a small gradient norm of the order $\delta^{\frac{1}{6}}$ as estimated in \eqref{eq:bound-q}.

\vskip10pt\noindent
{\bf Step 5: Proof of the claims \eqref{eq:claimw1}, \eqref{eq:claimw2} and \eqref{eq:claimw3}.} In order to conclude the proof, we need to check the claims on the function $w$ contained in Step 3.
\par
Let us start by noticing that the oscillation of the maps $a_{k,j}$ on intersecting squares can be estimated. Indeed as soon as $q_{k,j}$ and $q_{p,i}$ intersects, then 
$$
|q_{k,j}\cap q_{p,i}|\gtrsim \max(|q_{k,j}|,|q_{p,i}|),
$$ 
and since  (see \eqref{eq:est}) 
$$
| (q'_{k,j}\cap q'_{p,i}) \cap (\om_{k,j}\cup\om_{p,i})|\ll | q'_{k,j}\cap q'_{p,i}|
$$ 
and $a_{j,k},a_{i,p}$ are affine, then using \cite[Lemma 3.4]{CCI_19}  and \eqref{eq:L4} we deduce
\begin{multline}
\label{eq:aL4}
\Vert a_{k,j}-a_{p,i}\Vert_{L^4( q'_{k,j}\cap q'_{p,i})}\lesssim
\Vert a_{k,j}-a_{p,i}\Vert_{L^4(( q'_{k,j}\cap q'_{p,i})\setminus (\om_{k,j}\cup \om_{p,i}))}\\
\leq
\Vert a_{k,j}-u\Vert_{L^4( q'_{k,j}\setminus \om_{k,j})}+\Vert a_{p,i}-u\Vert_{L^4( q'_{p,i}\setminus\om_{p,i})}
\lesssim\delta_k^{\frac{1}{2}}\Vert e(u)\Vert_{L^2(q_{k,j}'')}+\delta_p^{\frac{1}{2}}\Vert e(u)\Vert_{L^2(q_{p,i}'')}\\
\lesssim\delta_k^{\frac{1}{2}}\Vert e(u)\Vert_{L^2(q_{k,j}''\cup q_{p,i}'')},
\end{multline}
 as $\delta_k$ and $\delta_p$ are comparable.
 
\par
Let us come to the claims. Clearly
\begin{equation*}
\label{eq_ew1}
e(w)=\sum_{k,j}\varphi_{k,j}e(w_{k,j})+\sum_{k,j}\nabla \varphi_{k,j} \odot w_{k,j},
\end{equation*}
so that
\begin{equation}
\label{eq_ew}
e(w)-\sum_{k,j}\varphi_{k,j}\rho_{\delta_k}* e(u)\\
=
\sum_{k,j}\varphi_{k,j}\Big[e(w_{k,j})-\rho_{\delta_k}*e(u)\Big]+\sum_{k,j}\nabla \varphi_{k,j} \odot w_{k,j}.
\end{equation}
For the first term of the right hand side, we have thanks to \eqref{eq:wkj}
\begin{equation}
\label{eq:est-1term}
\begin{split}
&\left\|\sum_{k,j}\varphi_{k,j}\Big[e(w_{k,j})-\rho_{\delta_k}*e(u)\Big]\right\|^2_{L^2(Q_r)}\lesssim \sum_{k,j}\left\|\varphi_{k,j}\Big[e(w_{k,j})-\rho_{\delta_k}*e(u)\Big]\right\|^2_{L^2(Q_r)}\\
&\le \sum_{k,j}\left\|e(w_{k,j})-\rho_{\delta_k}*e(u)\right\|^2_{L^2(q_{k,j})}\le \delta^{\frac{1}{3}}\sum_{k,j} \|e(u)\|^2_{L^2(q''_{k,j})}\lesssim \delta^{\frac{1}{3}}\|e(u)\|^2_{L^2(Q_r)},
\end{split}
\end{equation}
where we used the finite overlapping of the squares $q''_{k,j}$ { for the first and
last estimates}.
\par
Let us estimate the second term on the right hand side of \eqref{eq_ew}. Notice that we may write
\begin{equation*}
\label{eq:2term12}
\sum_{k,j}\nabla \varphi_{k,j} \odot w_{k,j}=\sum_{q_{k,j}\cap q_{p,i}\neq\emptyset}\nabla \varphi_{k,j} \odot (w_{k,j}-w_{p,i})
\qquad\text{on $q_{p,i}$}
\end{equation*}
since $\sum_{k,j}\nabla \varphi_{k,j}=0$.

\begin{itemize}
\item[(a1)] If $q_{p,i}''\Subset Q_{r_{-1}}$, then $q_{j,k}\cap q_{i,p}\neq \emptyset$ means that $\delta_k=\delta_p=\delta$, $k=p=0$, and we may rewrite
 the term as
$$
\sum_{q_{0,j}\cap q_{0,i}\neq\emptyset}\nabla \varphi_{0,j} \odot (w_{0,j}-w_{0,i})
$$
We get
\begin{equation}
\label{eq:2term1}
\left\|\sum_{q_{0,j}\cap q_{0,i}\neq\emptyset}\nabla \varphi_{0,j} \odot (w_{0,j}-w_{0,i})\right\|^2_{L^2(q_{0,i})}
\lesssim \sum_{q_{0,j}\cap q_{0,i}\neq\emptyset} \frac{1}{\delta^2}\|w_{0,j}-w_{0,i}\|^2_{L^2(q_{0,j}\cap q_{0,i})}.
\end{equation}
Now
$$
\|w_{0,j}-w_{0,i}\|_{L^2(q_{o,j}\cap q_{0,i})}=\|\rho_{\delta}*(v_{0,j}-v_{0,i})\|_{L^2(q_{0,j}\cap q_{0,i})}\le \|v_{0,j}-v_{0,i}\|_{L^2(q'_{0,j}\cap q'_{0,i})}.
$$
Since
\begin{align*}
 \|v_{0,j}-v_{0,i}\|_{L^2(q'_{0,j}\cap q'_{0,i})}& \le \|(a_{0,j}-a_{0,i})1_{\omega_{0,j}\cup \omega_{0,i}}\|_{L^2(q'_{0,j}\cap q'_{0,i})} +\|(u-a_{0,j})1_{\omega_{0,i}}\|_{L^2(q'_{0,j}\setminus \omega_{0,j})}
 \\
 &+\|(u-a_{0,i})1_{\omega_{0,j}}\|_{L^2(q'_{0,i}\setminus \omega_{0,i})} \\
&\le \|(a_{0,j}-a_{0,i})\|_{L^4(q'_{0,j}\cap q'_{0,i})}|\omega_{0,j}\cup \omega_{0,i}|^{\frac{1}{4}} 
+\|(u-a_{0,j})\|_{L^4(q'_{0,j}\setminus \omega_{0,j})}|\omega_{0,i}|^{\frac{1}{4}}
\\
&+\|(u-a_{0,i})\|_{L^4(q'_{0,i}\setminus \omega_{0,i})}|\omega_{0,j}|^{\frac{1}{4}},
\end{align*}
recalling \eqref{eq:omk}, \eqref{eq:L4} and \eqref{eq:aL4} we get
$$
\|w_{0,j}-w_{0,i}\|_{L^2(q_{o,j}\cap q_{0,i})}\le \|v_{0,j}-v_{0,i}\|_{L^2(q'_{0,j}\cap q'_{0,i})}\le \delta^{1+\frac{1}{4}}\|e(u)\|_{L^2(q''_{0,j}\cup q''_{0,i})}.
$$
Coming back to \eqref{eq:2term1} we infer
\begin{equation}
\label{eq:estq0i}
\begin{split}
\left\|\sum_{k,j}\nabla \varphi_{k,j} \odot w_{k,j}\right\|^2_{L^2(q_{0,i})}&\le
\left\|\sum_{q_{0,j}\cap q_{0,i}\neq\emptyset}\nabla \varphi_{0,j} \odot (w_{0,j}-w_{0,i})\right\|^2_{L^2(q_{0,i})}\\
&\lesssim \delta^{\frac{1}{2}}\sum_{q_{0,j}\cap q_{0,i}\neq\emptyset}\|e(u)\|^2_{L^2(q''_{0,j}\cup q''_{0,i})}.
\end{split}
\end{equation}

\item[(a2)] If $q_{p,i}\nsubseteq Q_{r_{-1}}$, then for $q_{k,j}\cap q_{p,i}\not=\emptyset$, we decompose
\[w_{p,i}-w_{k,j}=\rho_{\delta_p}*(v_{p,i}-a_{p,i})-\rho_{\delta_k}*(w_{k,j}-a_{k,j})+(a_{p,i}-a_{k,j}).\]
Notice the crucial step that $\rho_{\delta_{k}}*a_{k,j}=a_{k,j}$ due to the fact that $a_{k,j}$ is harmonic (since it is affine).
Then we have  thanks to \eqref{eq:L4} and \eqref{eq:aL4}
\begin{align*}
\Vert \rho_{\delta_k}*(v_{p,i}-a_{p,i})\Vert_{L^2(q_{k,j}\cap q_{p,i})}&\leq\Vert v_{p,i}-a_{p,i}\Vert_{L^2(q_{p,i}')}\lesssim\delta_p\Vert e(u)\Vert_{L^2(q_{i,p}'')}\\
\Vert \rho_{\delta_k}*(v_{k,j}-a_{k,j})\Vert_{L^2(q_{k,j}\cap q_{p,i})}&\leq\Vert v_{k,j}-a_{k,j}\Vert_{L^2(q_{k,j}')}\lesssim\delta_p\Vert e(u)\Vert_{L^2(q_{k,j}'')}\\
\Vert a_{p,i}-a_{k,j}\Vert_{L^2(q_{k,j}\cap q_{p,i})}&\lesssim\delta_p^{ 1+\frac{1}{4}} \Vert e(u)\Vert_{L^2(q_{k,j}''\cup q_{p,i}'')},
\end{align*}
where we also used the fact that $\delta_p$ and $\delta_k$ differ from at most a factor $2$. And so we obtain with the same computations as the previous point that

\begin{equation}
\label{eq:estqpi}
\left\|\sum_{k,j}\nabla \varphi_{k,j} \odot w_{k,j}\right\|^2_{L^2(q_{p,i})}\le  \sum_{q_{k,j}\cap q_{p,i}\neq\emptyset}\|e(u)\|^2_{L^2(q''_{k,j}\cup q''_{p,i})}.
\end{equation}

\end{itemize}

Gathering \eqref{eq:estq0i} and \eqref{eq:estqpi}, and in view of the choice of $r$ which satisfies \eqref{eq:r2}, we deduce 
\begin{equation}
\label{eq:est-2term}
\begin{split}
\left\|\sum_{k,j}\nabla \varphi_{k,j} \odot w_{k,j}\right\|^2_{L^2(Q_r)}&\le \sum_{p,i}\left\|\sum_{k,j}\nabla \varphi_{k,j} \odot w_{k,j}\right\|^2_{L^2(q_{p,i})}\\
&\lesssim \delta^{\frac{1}{2}}\|e(u)\|^2_{L^2(Q_{r_1})}+\|e(u)\|^2_{L^2(Q_r\setminus Q_{r_{-2}})}\lesssim
\delta^{\frac{1}{2}}\|e(u)\|^2_{L^2(Q_1)}.
\end{split}
\end{equation}
 
Coming back to \eqref{eq_ew}, in view of \eqref{eq:est-1term} and \eqref{eq:est-2term} we deduce that
$$
\left\|e(w)-\sum_{k,j}\varphi_{k,j}\rho_{\delta_k}* e(u)\right\|_{L^2(Q_r)}\lesssim \delta^{\frac{1}{6}}\|e(u)\|_{L^2(Q_1)},
$$
so that claim \eqref{eq:claimw2} follows. 
\par
In particular we get also that $w\in H^1(Q_r)$. Claim \eqref{eq:claimw3} concerning the traces follows by the construction which involves 
convolutions whose radius becomes finer and finer as we approach $\partial Q_r$ as detailed in \cite{CCI_19}. Finally we deduce that $w\in SBD(Q_1)$, and that claim \eqref{eq:claimw1} holds true.

\end{proof}

In the proof of Proposition \ref{lem:smoothing1} we made use of the following lemma due to Ne\v{c}as {( see \cite[Theorem IV.3.1]{Boyer}, or also~\cite{Bogovskii})}. 

 \begin{lemma}\label{lem:necas}
Let $\Om$ be a bounded, connected open set  with Lipschitz boundary, and  let $L^2_0(\Om)$ be the set of zero-average $L^2$-functions. Then there is a continuous linear map $\Phi:L^2_0(\Om)\to H^1_0(\Om;\R^d)$ such that $\text{div}\circ \Phi=\text{Id}_{L^2_0(\Om)}$.
\end{lemma}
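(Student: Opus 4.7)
My approach is to reduce the construction of $\Phi$ to a coercivity estimate for the gradient on zero-mean $L^2$ functions via duality, and to prove the latter by the Ne\v{c}as inequality combined with a compactness argument.

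More precisely, the existence of a continuous linear right inverse $\Phi$ of $\mathrm{div}: H^1_0(\Om;\R^d) \to L^2_0(\Om)$ is equivalent, by the open mapping theorem applied to the quotient by $\ker(\mathrm{div})$, to $\mathrm{div}$ being surjective with closed range; by the Banach closed range theorem this is in turn equivalent to the formal adjoint $-\nabla: L^2_0(\Om) \to H^{-1}(\Om;\R^d)$ being injective and having closed range, that is, to the estimate
\[
\|p\|_{L^2(\Om)} \le C \|\nabla p\|_{H^{-1}(\Om;\R^d)} \qquad \forall\, p \in L^2_0(\Om).
\]
Once this is established, $\Phi$ can be defined as the composition of the orthogonal projection of $H^1_0(\Om;\R^d)$ onto $(\ker\,\mathrm{div})^\perp$ with the inverse of the restriction of $\mathrm{div}$ to that subspace (which is a Banach isomorphism onto $L^2_0(\Om)$ by the previous step).

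To obtain the displayed inequality, the first step is Ne\v{c}as' inequality
$\|p\|_{L^2(\Om)} \le C\bigl(\|p\|_{H^{-1}(\Om)} + \|\nabla p\|_{H^{-1}(\Om;\R^d)}\bigr)$
for every distribution $p$ with $\nabla p \in H^{-1}(\Om;\R^d)$. On $\R^d$ this follows at once from Fourier analysis (the multiplier $|\xi|/\sqrt{1+|\xi|^2}$ is bounded), and the extension to a Lipschitz domain is obtained via a finite covering of $\partial\Om$ by Lipschitz graph charts, straightening of each chart, and a bounded extension operator; this is precisely where the Lipschitz regularity of $\partial\Om$ enters. The second step is a Peetre--Tartar compactness argument: combining Ne\v{c}as' inequality with the compact embedding $L^2(\Om) \hookrightarrow H^{-1}(\Om)$ and the fact that the only constant in $L^2_0(\Om)$ is zero (here the connectedness of $\Om$ is used), one eliminates the $\|p\|_{H^{-1}}$ term and recovers the desired coercivity on zero-mean functions.

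The main technical obstacle is Ne\v{c}as' inequality itself, whose proof on a general Lipschitz domain requires the slightly delicate passage from $\R^d$ via local charts. A constructive alternative would be to build $\Phi$ directly through Bogovskii's explicit integral formula on star-shaped subdomains and then glue via a partition of unity subordinate to a decomposition of $\Om$ into such subdomains; this would yield an operator with an explicit integral kernel, at the cost of verifying continuity from $L^2_0$ to $H^1_0$ on each piece and the compatibility of the pieces on overlaps.
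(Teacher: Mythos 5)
Your proposal is mathematically sound, but note that the paper does not actually prove Lemma \ref{lem:necas}: it records it as a classical result and refers to \cite[Theorem IV.3.1]{Boyer} and to \cite{Bogovskii}. What you outline --- reduce the existence of a bounded right inverse of $\mathrm{div}$ to the estimate $\|p\|_{L^2}\le C\|\nabla p\|_{H^{-1}}$ on $L^2_0(\Om)$ via the closed range theorem, obtain that estimate from Ne\v{c}as' inequality together with the compact embedding $L^2(\Om)\hookrightarrow H^{-1}(\Om)$ and connectedness (Peetre--Tartar), and finally invert $\mathrm{div}$ on $(\ker\,\mathrm{div})^\perp$ --- is precisely the classical duality proof underlying the first citation, and your ``constructive alternative'' via the explicit integral operator on star-shaped pieces glued by a partition of unity is exactly the content of the second. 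So there is no divergence from the paper, only a reconstruction of the standard argument it delegates to the literature; the genuinely technical ingredient, Ne\v{c}as' inequality on a Lipschitz domain via flattening of boundary charts, is correctly identified but (legitimately, at this level of detail) only sketched. One cosmetic remark: the operator $\Phi$ is simply the inverse of $\mathrm{div}$ restricted to $(\ker\,\mathrm{div})^\perp$, viewed as a map into $H^1_0(\Om;\R^d)$; composing with the orthogonal projection, as you phrase it, is redundant since the inverse already takes values in that subspace.
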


\subsection{Regularity for quasi  minimizers of the Griffith energy}
\label{sec:griffith}

Let $\Om\subseteq \R^2$ be an open set. In all the following, we will consider the {\it Griffith functional}
\[
G(J,u,B):=\int_{B}|e(u)|^2\,dx+\Ha^1(J\cap B),
\]
where $B\subseteq \Om$ is a Borel set.

We consider the following ({\it very} weak) notion of local minimality.

\begin{definition}[{\bf Quasi  minimizers}]
\label{def:almostquasimin}
 Let $\Lambda,\overline{r}>0$.
We say that $(J,u)\in \mathcal{V}(\Om)$  (recall Definition \ref{def:nubar}) is a $(\Lambda,\overline{r})$ quasi minimizer  of $G$ on ${\mathcal V}(\Om)$  if $G(J,u,\om)<+\infty$ for any open set $\om\Subset\Om$,  and for any square $Q_{x,r}\Subset\Om$ with $r\in (0,\overline{r})$, $\Ha^0(J\cap \partial Q_{x,r})=0$ and 
$$
\limsup_{s\to 0^+}\frac{1}{s}\Ha^1\left(J\cap(Q_{x,r}\setminus Q_{x,r-s})\right)<1,
$$
and for any function $v\in H^1(Q_{x,r};\R^2)$ with ${\rm div}\,v=0$ and $v=u$ on $\partial Q_{x,r}$, we have
\begin{equation}
\label{eq:ineqG}
\int_{Q_{x,r}}|e(u)|^2\,dx+\Ha^1(J\cap Q_{x,r})\leq \int_{Q_{x,r}}|e(v)|^2\,dx+\Lambda r^2.
\end{equation}

\end{definition}

\begin{remark}
 
{\rm

Notice that under the assumption of the previous definition, we have $(J\setminus Q_{x,r},v)\in {\mathcal V}(\Om)$, where we extended $v$ to the entire $\Om$ by setting $v=u$ in $\Om\setminus Q_{x,r}$, and inequality \eqref{eq:ineqG} may be written as 
$$
G(J,u,Q_{x,r})\leq G(J\setminus Q_{x,r},v,Q_{x,r})+\Lambda r^2.
$$ 
\par
The local minimality property involves thus a comparison between $(J,u)$ and very special competitors: the Sobolev function $v$ is obtained by ``smoothing out'' the jumps of $u$ inside suitable squares $Q_{x,r}$, so that it can be paired with the rectifiable set $J\setminus Q_{x,r}$, yielding the admissible pair $(J\setminus Q_{x,r},v)$. Such competitors are provided by the Smoothing Lemma \ref{lem:smoothing1}, for which the dimension two is essential. A somehow related weak notion of minimality involving Sobolev competitors, still in dimension two, has been investigated in \cite{BFG} ({\it minimality with respect to its own jump set}) for the (scalar) Mumford-Shah functional.

}
\end{remark}

\begin{remark}
{\rm
The notion of minimality is weak enough to include any local minimizer of a functional of the form
\[
F(u,A):=\int_{A}|e(u)|^2\,dx+\int_{J_u\cap A}\Theta(\nu_u,u^+,u^-)d\Ha^1
\]
where $\Theta$ is a measurable function such that $\inf(\Theta)\geq 1$ (or, $\inf(\Theta)>0$ up to scaling).
}
\end{remark}

The following result is the key ingredient for obtaining regularity.

\begin{proposition}[\bf Decay estimate]
\label{prop:decay}
 
Let $\Lambda>0$. There exists a universal constant $\overline{\tau}\in (0,1)$ such that for every $\tau \in (0,\bar\tau)$ there exist $\eps=\eps(\tau)$ and $\bar r=\bar r(\tau)$ with the property that for any $(\Lambda,\overline{r})$-quasi minimizer $(J,u)$ of $G$ on ${\mathcal V}(\Om)$, if for  $r<\bar r$
$$
G(J,u,Q_r)\geq r^{3/2}\qquad\text{and}\qquad \Ha^1(J\cap Q_r)\leq \eps r,
$$
then
\[
G(J,u,Q_{\tau r})\leq \tau^{3/2}G({ J},u,Q_r).
\]
\end{proposition}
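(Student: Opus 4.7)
The plan is to argue by contradiction and blow-up, in the style of De Giorgi--Carriero--Leaci and its vectorial Griffith variants \cite{CFI_19,CCI_19}: rescaled limits of bad sequences will turn out to be Stokes minimizers, whose quadratic interior energy decay will beat $\tau^{3/2}$ once $\tau$ is small.

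Assume the conclusion fails. Then one can fix $\tau<\bar\tau$ (with $\bar\tau$ small, to be chosen) and pick sequences $\eps_n,\bar r_n\to 0$ together with $(\Lambda,\bar r_n)$-quasi-minimizers $(J_n,u_n)\in\mathcal{V}(\Om)$ and radii $r_n\in(0,\bar r_n)$ violating the decay. Writing $e_n:=\int_{Q_{r_n}}|e(u_n)|^2$, $h_n:=\Ha^1(J_n\cap Q_{r_n})$, $\omega_n:=e_n+h_n$, a quick dichotomy on which of $e_n$, $h_n$ dominates, combined with the flexibility to link $\bar r_n$ to $\eps_n$, reduces matters to the volume-dominated regime $e_n\ge\omega_n/2\ge r_n^{3/2}/2$ and $\eps_n/r_n^{1/2}\to 0$ (the subcase $h_n\ge e_n$ forces $r_n\lesssim \eps_n^2$ and can be ruled out by shrinking $\bar r_n$ appropriately). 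Blow up via $v_n(y):=e_n^{-1/2}u_n(r_n y)$ and $\tilde J_n:=r_n^{-1}J_n$: then $(\tilde J_n,v_n)\in\mathcal{V}(Q_1)$ with $\int_{Q_1}|e(v_n)|^2=1$ and $\Ha^1(\tilde J_n\cap Q_1)\le\eps_n\to 0$.

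Next, one feeds $v_n$ into the Smoothing Lemma~\ref{lem:smoothing1}: for large $n$ it supplies a radius $\rho_n\in(1-\eps_n^{1/4},1)$ and a divergence-free $\tilde v_n\in SBD(Q_1)\cap H^1(Q_{\rho_n})$ equal to $v_n$ outside $Q_{\rho_n}$ and with $\|e(\tilde v_n)\|_{L^2(Q_{\rho_n})}\le 1+o(1)$. Subtracting infinitesimal rigid motions $a_n$ (automatically divergence-free) and applying Korn's inequality on $\tilde v_n-a_n$, one extracts a subsequence converging to a divergence-free $w\in H^1_{\mathrm{loc}}(Q_1;\R^2)$, strongly in $L^2_{\mathrm{loc}}$ and weakly in symmetric gradient. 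The crucial step is to identify $w$ as a local minimizer of $\int|e(\cdot)|^2$ among divergence-free Sobolev fields matching its own boundary data: for a given test field $\tilde w\in H^1(Q_\rho;\R^2)$ divergence-free with $\tilde w=w$ on $\partial Q_\rho$, build a divergence-free $\hat w_n\in H^1(Q_\rho)$ by interpolating $\tilde w$ inside $Q_{\rho-\delta}$ with $\tilde v_n-a_n$ on an annular shell of width $\delta$, then correct the divergence via Lemma~\ref{lem:necas}. Rescaling $\hat w_n$ back to $Q_{\rho r_n}$ produces an admissible competitor in the quasi-minimality of $(J_n,u_n)$; after dividing by $e_n$, the normalized $\Lambda$-term $\Lambda r_n^2/e_n\le 2\Lambda r_n^{1/2}\to 0$ and the rescaled surface weight $(r_n/e_n)\Ha^1\le 2\eps_n r_n^{-1/2}\to 0$ both vanish, and first letting $n\to\infty$ then $\delta\to 0$ yields $\int_{Q_\rho}|e(w)|^2\le\int_{Q_\rho}|e(\tilde w)|^2$.

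Standard interior regularity for the resulting Stokes system gives $w\in C^\infty(Q_{3/4};\R^2)$ with a Lipschitz estimate $\sup_{Q_{1/2}}|e(w)|^2\le C$ for a universal $C$, hence $\int_{Q_\tau}|e(w)|^2\le 4C\tau^2$ for every $\tau\in(0,1/2)$. Choosing $\bar\tau$ so that $8C\bar\tau^{1/2}<1$ ensures $\int_{Q_\tau}|e(w)|^2<\tau^{3/2}/2$ for all $\tau<\bar\tau$. The rescaled failure inequality reads $\int_{Q_\tau}|e(v_n)|^2+(r_n/e_n)\Ha^1(\tilde J_n\cap Q_\tau)>\tau^{3/2}(1+h_n/e_n)$; its right-hand side tends to $\tau^{3/2}$, its surface contribution vanishes as above, and via the competitor construction applied with $\tilde w=w$ on $Q_\tau$ (which gives $\limsup_n\int_{Q_\tau}|e(v_n)|^2\le\int_{Q_\tau}|e(w)|^2$, combined with weak lower semicontinuity) the volume contribution converges to $\int_{Q_\tau}|e(w)|^2<\tau^{3/2}/2$, yielding the required contradiction. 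The main obstacle will be the boundary-layer competitor construction $\hat w_n$, which must simultaneously match traces, preserve the divergence-free and non-penetration constraints, and control the symmetric gradient---it is here that the Smoothing Lemma~\ref{lem:smoothing1} and Lemma~\ref{lem:necas} are both essential, and also where the restriction to dimension two enters, through the two-dimensional nature of Lemma~\ref{lem:smoothing1}.
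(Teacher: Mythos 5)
Your overall architecture coincides with the paper's: contradiction and blow-up, the Smoothing Lemma \ref{lem:smoothing1} to produce a divergence-free Sobolev competitor, Korn's inequality after subtracting a rigid motion, identification of the limit as a divergence-free local minimizer of $\int|e(\cdot)|^2$ (Stokes), the quadratic interior decay (the paper's Lemma \ref{lem:regbil}) beating $\tau^{3/2}$, and the divergence correction via Lemma \ref{lem:necas}. The genuine gap is in your preliminary dichotomy. The negation of the statement lets you choose $\eps_n\to 0$ and $\bar r_n\to 0$, but it only returns \emph{some} $r_n<\bar r_n$; you get no lower bound on $r_n$ in terms of $\eps_n$. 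In the surface-dominated subcase $h_n\ge e_n$ the hypotheses give $r_n^{3/2}\le \omega_n\le 2\eps_n r_n$, i.e.\ $r_n\le 4\eps_n^2$, and shrinking $\bar r_n$ makes this regime \emph{more} available, not less: you cannot ``rule it out by shrinking $\bar r_n$'', and you cannot arrange $\eps_n r_n^{-1/2}\to 0$. Consequently your normalization by the elastic energy $e_n$ alone may degenerate ($e_n$ can be arbitrarily small, even zero, while $h_n=\eps_n r_n\ge r_n^{3/2}$), the rescaled surface weight $h_n/e_n$ need not vanish, and the right-hand side $\tau^{3/2}(1+h_n/e_n)$ of your rescaled failure inequality need not tend to $\tau^{3/2}$. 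As written, the reduction to the volume-dominated regime fails, and with it the rest of the argument.

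The paper avoids any case distinction by normalizing with the full Griffith energy $g_n:=G(K_n,w_n,Q_{r_n})\ge r_n^{3/2}$ and working with the rescaled functional $G_n(J,u,A)=\int_A|e(u)|^2\,dx+\tfrac{r_n}{g_n}\Ha^1(J\cap A)$, so that $G_n(J_n,u_n,Q_1)=1$, $\|e(u_n)\|_{L^2(Q_1)}\le 1$ and $\Ha^1(J_n\cap Q_1)=\eps_n\to 0$ hold automatically in every regime. Crucially, the rescaled surface term is never required to vanish: in the final contradiction the jump part of $u_n$ inside $Q_\tau$ appears only on the favorable side of the quasi-minimality inequality against the jump-free smoothed competitor, and if the limit $w$ is small or zero (because surface energy dominates) the decay bound $\int_{Q_\tau}|e(w)|^2\le \tfrac12\tau^{3/2}$ only improves. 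If you adopt this normalization, your outline essentially becomes the paper's proof; note also that your last step glosses over the annulus bookkeeping needed there, namely using point (d) of Lemma \ref{lem:smoothing1} to replace the smoothed field by $u_n$ on $Q_{s_n}\setminus Q_\tau$ before subtracting, which is how the paper isolates $G_n(J_n,u_n,Q_\tau)\le \tfrac12\tau^{3/2}+o_n$.
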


\begin{proof}
 By contradiction assume that for $\tau$ sufficiently small there exist $\eps_n\to 0$, $\bar r_n\to 0$, $0<r_n<\bar r_n$, and a sequence $(K_n,w_n)$ of 
$(\Lambda,\overline{r}_n)$-minimizers for such that for every $n$
\[
G(K_n,w_n,Q_{r_n})\geq r_n^{3/2},\quad\ \Ha^1(K_{n}\cap Q_{r_n})\leq \eps_n r_n,\quad\text{and}\quad G(K_n,w_n,Q_{\tau r_n})>\tau^{3/2}G(K_n,w_n,Q_{r_n}).
\]
Let
\[
g_n:=G(K_n,w_n,Q_{r_n}),\qquad J_n:=\frac{K_n}{r_n}\qquad\text{and}\qquad u_n(x):=\frac{w_n(r_n x)}{\sqrt{g_n}}.
\]
Then $(J_n,u_n)$ is a $(\Lambda \sqrt{r_n},1)$-minimizer of $G_n(\cdot,\cdot,Q_1)$, where
\[
G_n(J,u,A):=\int_{A}|e(u)|^2\,dx+\frac{r_n}{g_n}\Ha^1(J\cap A),
\]
with
\begin{equation}
\label{eq:abs-gn}
G_n(J_n,u_n,Q_1)=1,\qquad G_n(J_n,u_n,Q_\tau)>\tau^{3/2}\qquad\text{and}\qquad \Ha^1(J_n\cap Q_1)=\eps_n.
\end{equation}

Let us apply the Smoothing Lemma \ref{lem:smoothing1}: if $\delta_n=\eps_n^{\frac{1}{2}}$, let $Q_{s_n}$ with $1-\delta_n^{\frac{1}{2}}<s_n<1$ be the square on which the jumps of $u_n$ are smoothed out giving raise to the function $v_n$, associated to an admissible pair $(J\setminus Q_{s_n},v_n)\in {\mathcal V}(Q_1)$. In particular
\begin{equation}
\label{eq:boundevn}
\|e(v_n)\|_{L^2(Q_1)} \le (1+C\delta_n^{\frac{1}{6}})\|e(u_n)\|_{L^2(Q_1)}
\qquad\text{with}\qquad 
\|e(u_n)\|_{L^2(Q_1)}\le 1,
\end{equation}
and
\begin{equation}
\label{eq:cutoffvn}
\left\|e(v_n)- \varphi_n \rho_{\delta_n}*e(u)\right\|_{L^2(Q_{s_n})}\le C\delta_n^{\frac{1}{6}}\|e(u_n)\|_{L^2(Q_1)},
\end{equation}
where $C>0$ is independent of $n$ and $\varphi_n\in C^\infty(Q_{s_n},[0,1])$ is such that
$\varphi_n=0$ on $Q_{s_n}\setminus Q_{s_n-\delta_n}$, $\varphi_n=1$ on $Q_{s_n-4\delta_n}$.  Since $v_n$ is divergence free and Sobolev on $Q_{s_n}$ we have
\begin{equation}
\label{eq:divvn}
\int_{\partial Q_{s_n}} v_n\cdot \nu\,d\Ha^1=0.
\end{equation}

\par
By the classical Korn inequality on $Q_{s_n}$ there is an antisymmetric affine function $a_n$ such that $\int_{Q_{s_n}}(v_n-a_n)\,dx=0$ and
\[
\int_{Q_{s_n}}|\nabla (v_n-a_n)|^2\,dx\le C_1\int_{Q_{s_n}}|e(v_n)|^2\,dx
\]
for some $C_1>0$ independent of $n$.
We infer that $(v_n-a_n)$ is bounded in $H^1(Q_{s_n})$. Since $s_n\to 1$, we can assume, up to extracting a further subsequence,

\begin{equation}
\label{eq:weakH1vn}
v_n-a_n \weak w\qquad\text{weakly in }H^1_{loc}(Q_1;\R^2)
\end{equation}
for some $w\in H^1(Q_1)$. Since every $v_n-a_n$ has zero divergence, then so does $w$. Moreover $\|e(w)\|_{L^2(Q_1)}\le 1$.

\par
 Let $\psi\in C_c^\infty(Q_1;\R^2)$ have zero divergence, and let $\eta\in C^\infty_c(Q_1,[0,1])$ be a cut-off function such that
$\{\psi\neq 0\}\Subset \{\eta=1\}$.
Let us consider
$$
z_n:=
\begin{cases}
\mathbb{P}_{Q_{s_n}}\Big[(1-\eta)v_n+\eta(a_n+w+\psi)\Big]&\text{in }Q_{s_n}\\
u_n &\text{in }Q_1\setminus Q_{s_n},
\end{cases}
$$
where $\mathbb{P}_{Q_{s_n}}$ denotes the projection on divergence free $H^1(Q_{s_n})$ vector fields which preserves the trace obtained according to Lemma \ref{lem:necas} by considering 
$$
\mathbb{P}_{Q_{s_n}}(u):=u-\Phi_{Q_{s_n}}({\rm div}\,u)
$$
for any $u\in H^1(Q_{s_n};\R^2)$ with a zero mean divergence. Note that $z_n$ is well defined as 
$$
(1-\eta)v_n+\eta(a_n+w+\psi)=v_n\qquad\text{on }\partial Q_{s_n} 
$$
for $n$ large enough, and so its divergence has zero mean thanks to  \eqref{eq:divvn}. 
\par
Since $(J_n\setminus \partial Q_{s_n},z_n)$ is an admissible competitor for $(J_n,u_n)$ according to Definition \ref{def:almostquasimin}, we obtain

\begin{align*}
&G_n(J_n,u_n,Q_{s_n})\\
&\leq \left\Vert e\left(\mathbb{P}_{Q_{s_n}}\Big[(1-\eta)v_n+\eta(a_n+w+\psi)\Big]\right)\right\Vert_{L^2(Q_{s_n})}^2+\Lambda\sqrt{r_n}\\
&\leq \left(\Vert e\left((1-\eta)v_n+\eta(a_n+w+\psi)\right)\Vert_{L^2(Q_{s_n})}+C\Vert\text{div}((1-\eta)v_n+\eta(a_n+w+\psi))\Vert_{L^2(Q_{s_n})}\right)^2
+\Lambda \sqrt{r_n}\\
&\leq \left(\Vert e\left((1-\eta)v_n+\eta(a_n+w+\psi)\right)\Vert_{L^2(Q_{s_n})}+C\Vert\nabla \eta \cdot(w+a_n-v_n)\Vert_{L^2(Q_{s_n})}\right)^2+\Lambda\sqrt{r_n}.
\end{align*}
Since
\[
\Vert\nabla \eta \cdot(w+a_n-v_n)\Vert_{L^2(Q_{s_n})} \to 0
\]
and  (recall that $\{\psi\not=0\}\Subset \{\eta=1\}$)
\begin{align*}
\Vert e\left((1-\eta)v_n+\eta(a_n+w+\psi)\right)\Vert_{L^2(Q_{s_n})}&=\Vert (1-\eta)e(v_n)+\eta e(w+\psi)+\nabla \eta \odot(w+a_n-v_n)\Vert_{L^2(Q_{s_n})}\\
&\leq \Vert (1-\eta)e(v_n)+\eta e(w+\psi)\Vert_{L^2(Q_{s_n})}+o_n
\end{align*}
where $o_n\to 0$, we infer  thanks to \eqref{eq:boundevn} (and since $e(a_n)=0$)
\begin{equation}
\label{eq:unzn}
G_n(J_n,u_n,Q_{s_n})\leq \Vert (1-\eta)e(v_n-a_n)+\eta e(w+\psi)\Vert_{L^2(Q_{s_n})}^2+o_n.
\end{equation}
Now,  still using \eqref{eq:boundevn}  we may write

\[
\int_{Q_{s_n}}|e(v_n-a_n)|^2\,dx\leq (1+C\delta_n^{\frac{1}{6}})^2\int_{Q_{s_n}}|e(u_n)|^2\,dx\leq (1+C\delta_n^{\frac{1}{6}})^2G_n(J_n,u_n,Q_{s_n})  + o_n,
\]

and so  coming back to \eqref{eq:unzn} we deduce
\begin{equation*}
\label{eq:znznz}
\|e(v_n-a_n)\|^2_{L^2(Q_{s_n})}\leq \Vert (1-\eta)e(v_n-a_n)+\eta e(w+\psi)\Vert_{L^2(Q_{s_n})}^2+o_n.
\end{equation*}
 
This yields
\begin{align*}
\int_{Q_{s_n}}\left(1-(1-\eta)^2\right)|e(v_n-a_n)|^2\,dx\leq \int_{Q_{s_n}}\left(2\eta(1-\eta)e(v_n-a_n):e(w)+\eta^2|e(w+\psi)|^2\right)\,dx+o_n
\end{align*}
so that  in view of \eqref{eq:weakH1vn}
\begin{align*}
\int_{Q_{1}}\left(1-(1-\eta)^2\right)|e(w)|^2\,dx&\leq \limsup_{n\to\infty}\int_{Q_{1}}\left(1-(1-\eta)^2\right)|e(v_n-a_n)|^2\,dx\\
&\le \int_{Q_{1}}\left(2\eta(1-\eta)e(w):e(w)+\eta^2|e(w+\psi)|^2\right)\,dx.
\end{align*}
Notice that by choosing $\psi=0$ and letting $\eta$ localize on characteristic functions of open sets, we infer that

\begin{equation}
\label{eq:strongew}
e(v_n-a_n)\to e(w)\qquad \text{strongly in }L^2_{loc}(Q_1;M^{2\times 2}_{sym}).
\end{equation}

In particular we get
\[
\int_{Q_1}|e(w)|^2\,dx\leq\int_{Q_1}|e(w+\psi)|^2\,dx,
\]

which means that $w$ is a local minimizer of the energy $z\mapsto \|e(z)\|_{L^2(Q_1)}^2$ on $H^1$ functions with zero divergence. This yields
$\Delta w=\nabla p$ for some $p\in L^2(Q_1)$.  Using the Lemma \ref{lem:regbil} below, we have  
\[
\int_{Q_{\tau }}|e(w)|^2\,dx\leq \frac{1}{2}\tau^{\frac{3}{2}}\int_{Q_1}|e(w)|^2\,dx\leq \frac{1}{2}\tau^{\frac{3}{2}}.
\]

Taking into account \eqref{eq:strongew} we deduce
\begin{equation}
\label{eq:vnqtau}
\|e(v_n)\|^2_{L^2(Q_{\tau+\delta_n})}\le \frac{1}{2}\tau^{\frac{3}{2}}+o_n.
\end{equation}
By minimality we have
$$
G(u_n, J_n,Q_{s_n})\le \|e(v_n)\|^2_{L^2(Q_{s_n})}+\Lambda\sqrt r_n=\|e(v_n)\|^2_{L^2(Q_{\tau +\delta_n})}+
\|e(v_n)\|^2_{L^2(Q_{s_n}\setminus Q_{\tau +\delta_n})}+\Lambda\sqrt r_n
$$
while thanks to \eqref{eq:cutoffvn}
\begin{multline*}
\|e(v_n)\|_{L^2(Q_{s_n}\setminus Q_{\tau +\delta_n})}\le \|e(v_n)-\varphi_n \rho_{\delta_n}*e(u_n)\|_{L^2(Q_{s_n}\setminus Q_{\tau +\delta_n)}}+
\|\varphi_n \rho_{\delta_n}*e(u_n)\|_{L^2(Q_{s_n}\setminus Q_{\tau +\delta_n})}\\
\le o_n+\|\rho_{\delta_n}*e(u_n)\|_{L^2(Q_{s_n-\delta_n}\setminus Q_{\tau +\delta_n})}
\le o_n+\|e(u_n)\|_{L^2(Q_{s_n}\setminus Q_{\tau })}.
\end{multline*}
In view of \eqref{eq:vnqtau} we infer
\begin{multline*}
G(u_n,J_n,Q_{s_n})\le \|e(v_n)\|^2_{L^2(Q_{\tau +\delta_n})}+[o_n+\|e(u_n)\|_{L^2(Q_{s_n}\setminus Q_{\tau })}]^2
+\Lambda\sqrt{r_n}
\le \frac{1}{2}\tau^{\frac{3}{2}}+\tilde o_n+\|e(u_n)\|^2_{L^2(Q_{s_n}\setminus Q_{\tau })}
\end{multline*}
so that
\[
G(u_n,J_n,Q_\tau)\le  \frac{1}{2}\tau^{\frac{3}{2}}+\tilde o_n.
\]
In conclusion, taking into account \eqref{eq:abs-gn}, if $n$ is large enough we get
\[
\tau^{\frac{3}{2}}< G(u_n,J_n,Q_\tau)\le  \frac{1}{2}\tau^{\frac{3}{2}}+\tilde o_n,
\]
which is a contradiction.

\end{proof}

In the preceding proof, we made use of the following result.

\begin{lemma}
\label{lem:regbil}
There exists a constant $C_0>0$ such that for any divergence-free vector field $u\in H^1(Q_1;\R^2)$ such that $\Delta u=\nabla p$ for some pressure $p\in L^2(Q_1)$, we have
\[
\forall \tau\in (0,1/2]\,:\, \ \int_{Q_{\tau}}|e(u)|^2\,dx\leq C_0\tau^2\int_{Q_1}|e(u)|^2\,dx.
\]
In particular, for any $0<\tau\leq \overline{\tau}:=\frac{1}{4C_0^2}\wedge\frac{1}{2}$ we have 
$$
\int_{Q_{\tau}}|e(u)|^2\,dx\leq \frac{1}{2}\tau^{3/2}\int_{Q_1}|e(u)|^2\,dx.
$$
\end{lemma}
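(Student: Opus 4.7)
The plan is to reduce the statement to classical interior regularity for the stationary Stokes system. First I would observe that the hypothesis $\Delta u = \nabla p$ together with $\mathrm{div}\,u = 0$ yields $\Delta p = 0$ (by taking the divergence of $\Delta u = \nabla p$), so that $\Delta^2 u = \nabla \Delta p = 0$. Each component of $u$ is therefore biharmonic on $Q_1$, and the standard interior estimates for biharmonic functions on the cube give, for any multi-index $\alpha$, $\|D^\alpha u\|_{L^\infty(Q_{1/2})} \le C_\alpha \|u\|_{L^2(Q_1)}$, with $C_\alpha$ depending only on $|\alpha|$ and the dimension.

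To convert this $L^2$ control of $u$ into an $L^\infty$ bound on $e(u)$ I would subtract an infinitesimal rigid motion. By Korn's inequality on $Q_1$ there exists $R(x) = Mx+b$ with $M \in \R^{2\times 2}$ antisymmetric such that
\[
\|u - R\|_{L^2(Q_1)} \le C \|e(u)\|_{L^2(Q_1)}.
\]
Since $R$ is affine it is biharmonic, hence so is $u-R$, while $e(u-R) = e(u)$ pointwise (infinitesimal rigid motions have vanishing symmetrized gradient). Applying the interior biharmonic estimate to $u - R$ at order one yields
\[
\|e(u)\|_{L^\infty(Q_{1/2})} \le C \|u - R\|_{L^2(Q_1)} \le C' \|e(u)\|_{L^2(Q_1)},
\]
with $C'$ a universal constant.

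The decay is then immediate: for any $\tau \in (0, 1/2]$ one has $Q_\tau \subset Q_{1/2}$ and $|Q_\tau| = 4\tau^2$, whence
\[
\int_{Q_\tau} |e(u)|^2 \, dx \le |Q_\tau|\, \|e(u)\|_{L^\infty(Q_{1/2})}^2 \le C_0 \tau^2 \int_{Q_1} |e(u)|^2 \, dx,
\]
which is the main estimate. The ``in particular'' claim is then an elementary algebraic check: $C_0 \tau^2 \le \tfrac{1}{2} \tau^{3/2}$ as soon as $\tau^{1/2} \le 1/(2C_0)$, i.e., $\tau \le 1/(4C_0^2)$, matching the statement's threshold.

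No serious obstacle is expected here: interior regularity for biharmonic functions (equivalently, for the homogeneous Stokes system) and Korn's inequality on the Lipschitz domain $Q_1$ are both classical, and the only minor care needed is to verify that the constants are genuinely universal, which they are because everything is set up on the fixed reference cube $Q_1$.
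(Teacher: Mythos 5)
Your proof is correct and follows essentially the same route as the paper: Korn's inequality to subtract an infinitesimal rigid motion, interior elliptic estimates for a biharmonic quantity to get a pointwise bound on $e(u)$ in $Q_{1/2}$, and then the trivial bound $|Q_\tau|\lesssim\tau^2$, with the same algebra for the ``in particular'' threshold. The only (harmless) difference is that you obtain biharmonicity of $u$ componentwise by taking the divergence of $\Delta u=\nabla p$, whereas the paper passes through the two-dimensional stream function $\varphi$ with $u=\nabla^\perp\varphi$ and $\Delta^2\varphi=0$; both reduce to the same classical interior estimate.
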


\begin{proof}
Notice that $e(u)$ is invariant by the addition of an asymmetric affine function $a$. Up to a translation by such a function, Korn's inequality tells us that
\[\int_{Q_1}u^2\,dx\leq C\int_{Q_1}|e(u)|^2\,dx.\]
The equations verified by $u$ are equivalent to the existence of $\varphi\in H^2(Q_1)$ such that $\varphi(0)=0$, $u=\nabla^\bot\varphi$, and $\Delta^2\varphi=0$. By elliptic regularity there is a constant $C'$ such that
\[\sup_{Q_{1/2}}\left|\nabla^2 \varphi\right|^2\leq C'\int_{Q_1}|\nabla\varphi|^2\,dx\]
and so for any $\tau\leq 1/2$,
\begin{align*}
\int_{Q_\tau}|e(u)|^2\,dx&\leq 4|Q_\tau|\sup_{Q_{1/2}}\left|\nabla^2 \varphi\right|^2\leq 4CC'|Q_1|\tau^2\int_{Q_1}|e(u)|^2\,dx.
\end{align*}
\end{proof}

 The decay estimate can be iterated as follows.

 \begin{lemma}[\bf Iteration of the decay]
 \label{lem:iteration}
  Let $\Lambda>0$ and, according to Proposition \ref{prop:decay},  let $\tau_0$ be small enough such that the decay estimate applies
 { with $\eps_0=\eps(\tau_0)$ and $\bar r_0=\bar r(\tau_0)$, and}
  let $\tau_1\in (0,\eps_0^2)$ be small enough that the decay property applies { with 
 $\eps_1$, $\bar r_1$.}
 Finally, let
\[
\bar r:=\min\left(\bar r_0,\bar r_1,\eps_0^2\tau_1^2,\frac{\eps_0^2\tau_0^3}{ \tau_1 }\right).
\]
Suppose that $(J,u)$ is a $(\Lambda, \bar r )$-quasi minimizer  of $G$ on ${\mathcal V}(\Om)$  and $G(J,u,Q_{x,r})\leq \eps_1 r$ for some $r\in (0, \bar r)$. Then for all $k\in\mathbb{N}$,
\[
G(J,u,Q_{x,\tau_0^k \tau_1 r})\le \eps_ 0\tau_0^{\frac{3}{2}k}\tau_1 r.
\]
\end{lemma}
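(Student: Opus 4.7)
The plan is to prove the bound by induction on $k\ge 0$, writing $\rho_k:=\tau_0^k\tau_1 r$ so the target becomes $G(J,u,Q_{x,\rho_k})\le \eps_0\tau_0^{3k/2}\tau_1 r$. At each scale we either invoke Proposition~\ref{prop:decay} (when the energy is above the critical threshold $\rho_k^{3/2}$) or fall back on monotonicity of $G(J,u,\cdot)$ in the domain (when it is below). The two ``threshold'' constraints in the definition of $\bar r$, namely $\bar r\le \eps_0^2\tau_1^2$ and $\bar r\le \eps_0^2\tau_0^3/\tau_1$, are designed precisely to close the monotonicity branch at the base case and at the inductive step respectively, while $\tau_1<\eps_0^2$ closes the decay branch at the base step.

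\textbf{Base case $k=0$.} From $G(J,u,Q_{x,r})\le \eps_1 r$ one has $\Ha^1(J\cap Q_{x,r})\le \eps_1 r$ and $r<\bar r\le \bar r_1$. If $G(J,u,Q_{x,r})\ge r^{3/2}$, Proposition~\ref{prop:decay} with ratio $\tau_1$ yields
\[
G(J,u,Q_{x,\tau_1 r})\le \tau_1^{3/2}G(J,u,Q_{x,r})\le \tau_1^{3/2}\eps_1 r\le \eps_0\tau_1 r,
\]
the last inequality using $\tau_1^{1/2}\eps_1\le \eps_0$, a consequence of $\tau_1<\eps_0^2$ (and of $\eps_1\le 1$, which may be assumed by shrinking $\tau_1$ slightly if necessary). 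Otherwise $G(J,u,Q_{x,r})<r^{3/2}$, and monotonicity gives $G(J,u,Q_{x,\tau_1 r})<r^{3/2}\le \eps_0\tau_1 r$, where the final inequality uses $r\le \bar r\le \eps_0^2\tau_1^2$.

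\textbf{Inductive step.} Assume $G(J,u,Q_{x,\rho_k})\le \eps_0\tau_0^{3k/2}\tau_1 r$. Since $\tau_0\le 1$, this implies
\[
\Ha^1(J\cap Q_{x,\rho_k})\le G(J,u,Q_{x,\rho_k})\le \eps_0\tau_0^{3k/2}\tau_1 r\le \eps_0\tau_0^k\tau_1 r=\eps_0\rho_k,
\]
and $\rho_k\le \tau_1 r<\bar r_0$. If $G(J,u,Q_{x,\rho_k})\ge \rho_k^{3/2}$, Proposition~\ref{prop:decay} with ratio $\tau_0$ gives
\[
G(J,u,Q_{x,\rho_{k+1}})\le \tau_0^{3/2}G(J,u,Q_{x,\rho_k})\le \eps_0\tau_0^{3(k+1)/2}\tau_1 r.
\]
Otherwise $G(J,u,Q_{x,\rho_k})<\rho_k^{3/2}$ and monotonicity yields
\[
G(J,u,Q_{x,\rho_{k+1}})<\rho_k^{3/2}=\tau_0^{3k/2}\tau_1^{3/2}r^{3/2},
\]
which is bounded by $\eps_0\tau_0^{3(k+1)/2}\tau_1 r$ exactly when $r\le \eps_0^2\tau_0^3/\tau_1$: this is the last constraint in the definition of $\bar r$.

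There is no analytic obstacle beyond Proposition~\ref{prop:decay}; the work is purely combinatorial bookkeeping, matching each of the four constraints in the definition of $\bar r$ to the step (base/inductive, decay/monotonicity) where it is needed, and verifying that $\tau_1<\eps_0^2$ is exactly what bridges the gap between the available decay factor $\tau_1^{3/2}$ and the target prefactor $\tau_1$ at the first step.
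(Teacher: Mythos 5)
Your proof is correct and follows essentially the same route as the paper: induction on $k$, splitting at each scale into the decay branch (Proposition \ref{prop:decay}) and the monotonicity branch, with the constraints $\tau_1<\eps_0^2$, $\bar r\le\eps_0^2\tau_1^2$ and $\bar r\le\eps_0^2\tau_0^3/\tau_1$ matched to exactly the same cases as in the paper. The only cosmetic point is your aside on ensuring $\eps_1\le 1$: rather than ``shrinking $\tau_1$'' (which changes $\eps_1$ unpredictably), simply replace $\eps_1$ by $\min(\eps_1,1)$, which is harmless since decreasing $\eps$ only strengthens the hypothesis of the decay property — an implicit normalization the paper also uses.
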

\begin{proof}  Let us prove the statement by induction on $k$.  In the following, we write $g(r)=G(J,u,Q_{x,r})$,  so that we need to check that if $g(r)\le \eps_1 r$, then for every $k\in\N$
\begin{equation}
\label{eq:gk}
g(\tau_0^k \tau_1 r)\le \eps_ 0\tau_0^{\frac{3}{2}k}\tau_1 r.
\end{equation}

 The inequality is true for $k=0$. Indeed we have the following alternatives:
\begin{itemize}
\item[(a)] If $g(r)>r^{3/2}$ then $g(\tau_1 r)\leq  \tau_1^{3/2}g(r) \le \sqrt{\tau_1}\tau_1\eps_1 r \leq \eps_0 \tau_1 r$ by definition of $\tau_1$.
\item[(b)] If $g(r)\leq r^{3/2}$, then $g(\tau_1 r)\leq g(r)\le r^{3/2}\leq\eps_0 \tau_1 r$ by definition of $\bar r$.
\end{itemize}
 Assume now that \eqref{eq:gk} holds.  Notice that by definition of $G$ we have  (since $\tau_0<1$)
\[
\Ha^1(J\cap Q_{\tau_0^k \tau_1 r})\leq g(\tau_0^k\tau_1 r)\le \eps_ 0\tau_0^{\frac{3}{2}k}\tau_1 r\leq \eps_0 \tau_0^k\tau_1 r,
\]
so the decay property of Proposition \ref{prop:decay} may be applied. Again we  have two alternatives.
\begin{itemize}
\item[(a)] If $g(\tau_0^k\tau_1 r)>(\tau_0^{ k}\tau_1r)^{3/2}$, by the decay property we have,  using \eqref{eq:gk},
\[
g(\tau_0^{k+1}\tau_1r)\leq \tau_0^{3/2}g(\tau_0^k\tau_1r)\leq \eps_0\tau_{0}^{\frac{3}{2}(k+1)}\tau_1 r.
\]
\item[(b)] If $g(\tau_0^k\tau_1r)\leq (\tau_0^k\tau_1r)^{3/2}$ then  by the definition of $\bar r$
\[g(\tau_0^{k+1}\tau_1r)\leq g(\tau_0^{k}\tau_1r)\leq \sqrt{\frac{\tau_1 r}{\eps_0^2\tau_0^3}}\eps_0\tau_0^{\frac{3}{2}(k+1)}\tau_1 r\leq \eps_0\tau_0^{\frac{3}{2}(k+1)}\tau_1 r.\]
\end{itemize}
 In both cases, \eqref{eq:gk} follows for the choice $k+1$, so that the induction step is proved.
\end{proof}

If we want to draw some conclusions on the regularity of quasi minimizers $(J,u)$, we need somehow to bound the freedom connected to the choice of $J$: notice indeed that any pair $(J\Delta N,u)$ with $\Ha^1(N)=0$ is essentially equivalent to $(J,u)$, where $A\Delta B$ denotes the symmetric difference of sets.
\par
We set
\begin{equation}\label{eq_density}
J^+:=\left\{x\in \Om:\limsup_{r\to 0}\frac{\Ha^1(J\cap Q_{x,r})}{r}>0\right\}.
\end{equation}
$J^+$ is a sort of normalized version of $J$, where points of density zero have been erased.
\par
By standard properties of rectifiable sets we have 
$$
\Ha^1(J\Delta J^+)=0.
$$ 
As a consequence if $(J,u)\in \mathcal V(\Om)$, then also $(J^+,u)\in \mathcal V(\Om)$ with $G(J,u,A)=G(J^+,u,A)$ for every Borel set $A\subseteq \Om$.

\begin{proposition}
\label{prop:lower-bound}

Given $\Lambda>0$, there exist $\eps,\bar r>0$ such that of any $(\Lambda,\overline{r})$-quasi minimizer $(J,u)$  of $G$ on ${\mathcal V}(\Om)$, if 
$G(J,u,Q_{x,r})\leq \eps r$ for some $Q_{x,r}\Subset \Om$ with $r<\bar r$, then
$J^+\cap Q_{x,\frac{r}{2}}=\emptyset$.

\end{proposition}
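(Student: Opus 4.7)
The plan is to show that no point of $Q_{x,r/2}$ can belong to $J^+$. By the definition \eqref{eq_density}, $x_0\in J^+$ requires a strictly positive upper linear density of $J$ at $x_0$, so it suffices to prove that for every $x_0\in Q_{x,r/2}$ one has $\Ha^1(J\cap Q_{x_0,\rho})/\rho\to 0$ along some sequence $\rho\to 0^+$. The natural way to produce such a decay is to chain Proposition \ref{prop:decay} into the geometric decay provided by Lemma \ref{lem:iteration}, centered at $x_0$ rather than at $x$.

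Concretely, I would fix the parameters $\tau_0,\tau_1,\eps_0,\eps_1$ and the scale $\bar r_{\text{it}}$ furnished by Lemma \ref{lem:iteration} (all depending only on $\Lambda$), and then choose $\eps:=\eps_1/2$ and $\bar r:=\bar r_{\text{it}}$ in the statement of the proposition. Given a $(\Lambda,\bar r)$-quasi minimizer $(J,u)$ and a cube $Q_{x,r}\Subset\Om$ with $r<\bar r$ and $G(J,u,Q_{x,r})\leq\eps r$, pick an arbitrary $x_0\in Q_{x,r/2}$. From the definition of the cubes one checks immediately that $Q_{x_0,r/2}\subset Q_{x,r}$, and since $G$ is built from the two nonnegative measures $|e(u)|^2\,dx$ and $\Ha^1\lfloor J$, monotonicity gives
$$
G(J,u,Q_{x_0,r/2})\leq G(J,u,Q_{x,r})\leq \eps r=\eps_1\,\frac{r}{2}.
$$

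Since $r/2<\bar r_{\text{it}}$, Lemma \ref{lem:iteration} applied to $(J,u)$ on the cube $Q_{x_0,r/2}$ yields, for every $k\in\N$,
$$
G\bigl(J,u,Q_{x_0,\tau_0^k\tau_1 r/2}\bigr)\leq \eps_0\,\tau_0^{3k/2}\,\tau_1\,\frac{r}{2}.
$$
Dividing by the radius $\tau_0^k\tau_1 r/2$ and bounding the surface term by $G$, I obtain
$$
\frac{\Ha^1\bigl(J\cap Q_{x_0,\tau_0^k\tau_1 r/2}\bigr)}{\tau_0^k\tau_1 r/2}\leq \eps_0\,\tau_0^{k/2}\longrightarrow 0\quad\text{as }k\to+\infty,
$$
so that $x_0$ has zero upper linear density for $J$ and therefore $x_0\notin J^+$. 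Since $x_0\in Q_{x,r/2}$ was arbitrary, the desired conclusion $J^+\cap Q_{x,r/2}=\emptyset$ follows.

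All the substantial analysis (the Smoothing Lemma, the decay estimate, and its iteration) has already been carried out upstream, so there is no genuine obstacle at this stage; the only bookkeeping is to make sure that $r/2$ still lies in the admissible range of Lemma \ref{lem:iteration}, which is ensured by the choice $\bar r=\bar r_{\text{it}}$ together with the hypothesis $r<\bar r$.
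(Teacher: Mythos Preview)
Your argument is essentially identical to the paper's: choose $\eps=\eps_1/2$ and $\bar r$ from Lemma~\ref{lem:iteration}, recentre at $x_0\in Q_{x,r/2}$ using $Q_{x_0,r/2}\subset Q_{x,r}$, and then iterate the decay. The paper packages the outcome of the iteration as the continuous bound $G(J,u,Q_{x_0,\rho})\le C_0 r^{-1/2}\rho^{3/2}$ for all $\rho\in(0,r/2)$, but this is the same content.

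One small correction: your sentence ``it suffices to prove that $\Ha^1(J\cap Q_{x_0,\rho})/\rho\to 0$ along \emph{some} sequence $\rho\to 0^+$'' is not right, since $J^+$ is defined via a $\limsup$, and vanishing along a subsequence only controls the $\liminf$. Fortunately your actual computation gives more than you claim: by monotonicity of $\rho\mapsto\Ha^1(J\cap Q_{x_0,\rho})$, for any $\rho\in[\tau_0^{k+1}\tau_1 r/2,\tau_0^{k}\tau_1 r/2]$ one has
\[
\frac{\Ha^1(J\cap Q_{x_0,\rho})}{\rho}\le \frac{\Ha^1(J\cap Q_{x_0,\tau_0^{k}\tau_1 r/2})}{\tau_0^{k+1}\tau_1 r/2}\le \frac{\eps_0}{\tau_0}\,\tau_0^{k/2}\to 0,
\]
so the $\limsup$ is indeed zero and $x_0\notin J^+$. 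With this one-line interpolation inserted, your proof is complete and matches the paper's.
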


\begin{proof}
 Let $\eps_0,\eps_1,\tau_1,\tau_2,\bar r$ be given according to Lemma \ref{lem:iteration}. Notice that if $G(J,u,Q_{x,r})\leq \eps_1 r$ with $r<\bar r$, then for any $\rho\in (0,r)$
\begin{equation}
\label{eq:decay-it}
G(J,u,Q_{x,\rho})\leq C_0r^{-\frac{1}{2}}\rho^\frac{3}{2}\qquad \text{where }C_0:=\max\left\{\eps_1\tau_1^{-\frac{3}{2}},\eps_0\tau_0^{ -\frac{1}{2}}\tau_1^{-\frac{1}{2}}\right\}.
\end{equation}
Let us set $\eps:=\frac{1}{2}\eps_1$, and assume  $G(J,u,Q_{x,r})\leq \eps r$. Notice that for any $y\in  Q_{x,\frac{r}{2}}$, we have 
$$
G(J,u,Q_{y,\frac{r}{2}})\le G(J,u,Q_{x,r})\leq\eps r= \eps_1 \frac{r}{2}
$$ 
so that from \eqref{eq:decay-it}

$$
0=\lim_{\rho\to 0^+}\frac{G(J,u,Q_{y,\rho})}{\rho}\ge \limsup_{\rho\to 0^+}
\frac{\Ha^1(J\cap Q_{y,\rho})}{\rho},
$$ 
which yields $J^+\cap Q_{x,\frac{r}{2}}=\emptyset$.

 \end{proof}

 \begin{proposition}[\bf Regularity for quasi minimizers]
 \label{prop:Jreg}
Let $\Lambda,\overline{r}>0$.  Then for any $(\Lambda,\overline{r})$-quasi minimizer $(J,u)$ of $G$  on ${\mathcal V}(\Om)$  we have that $J^+$ (see \eqref{eq_density}) is essentially closed in $\Om$, i.e.,
\[
\Ha^1\left(\Om\cap (\overline{J^+}\setminus J^+) \right)=0,
\]
while $u\in  C^\infty(\Om\setminus \overline{J^+})$.

\end{proposition}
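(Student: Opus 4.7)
The proof splits into essential closedness of $J^+$ and smoothness of $u$ off $\overline{J^+}$. For the first part, I would define the open set
\[
\mathcal R:=\bigcup\left\{\,Q_{x,r/2}:Q_{x,r}\Subset\Omega,\ r<\bar r,\ G(J,u,Q_{x,r})<\varepsilon r\,\right\},
\]
where $\varepsilon,\bar r>0$ are the constants supplied by Proposition~\ref{prop:lower-bound}. That proposition yields $J^+\cap\mathcal R=\emptyset$, hence $\Omega\cap\overline{J^+}\subseteq\Omega\setminus\mathcal R$, and it suffices to prove $\Ha^1((\Omega\setminus\mathcal R)\setminus J^+)=0$. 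For any $x$ in this difference, $\lim_{r\to 0}\Ha^1(J\cap Q_{x,r})/r=0$ (since $x\notin J^+$) while $G(J,u,Q_{x,r})\ge\varepsilon r$ for all small admissible $r$ (since $x\notin\mathcal R$); combining these two gives $\limsup_{r\to 0}\mu(Q_{x,r})/r\ge\varepsilon$ for the finite absolutely continuous Radon measure $\mu:=|e(u)|^2\,\mathcal L^2$. A standard density argument shows this locus is $\Ha^1$-negligible: for each $\lambda>0$ the set $\{\limsup_{r\to 0}\mu(Q_{x,r})/r>\lambda\}$ has finite $\Ha^1$-measure (by the classical upper-density estimate for Radon measures), hence zero $\mathcal L^2$-measure (as $d=2>1$), hence zero $\mu$-measure by $\mu\ll\mathcal L^2$, hence zero $\Ha^1$-measure in turn; sending $\lambda\to 0$ closes this step.

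For smoothness, note that on $\Omega\setminus\overline{J^+}$ the jump set $J_u$ is $\Ha^1$-negligible (since $\Ha^1(J_u\setminus J)=0$ by admissibility and $\Ha^1(J\setminus J^+)=0$ by rectifiability of $J$), so Korn's inequality in $SBD$ promotes $u$ to $H^1_{\mathrm{loc}}(\Omega\setminus\overline{J^+})$. For any cube $Q_{x,r}\Subset\Omega\setminus\overline{J^+}$ with $r<\bar r$, I would compare $u$ with the divergence-free Stokes minimizer $w\in H^1(Q_{x,r};\R^2)$ equal to $u$ on $\partial Q_{x,r}$; the quasi-minimality and the orthogonality $\int_{Q_{x,r}}e(w):e(u-w)\,dx=0$ yield
\[
\|e(u)-e(w)\|_{L^2(Q_{x,r})}^2\le \Lambda r^2.
\]
Coupling this with the Campanato decay for $e(w)$ (analytic by Stokes elliptic regularity) and iterating across shrinking cubes produces Hölder continuity of $e(u)$, hence $u\in C^{1,\alpha}_{\mathrm{loc}}$; classical bootstrap via the Stokes system then promotes $u$ to $C^\infty(\Omega\setminus\overline{J^+})$.

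The main obstacle lies in the Campanato iteration in the borderline dimension $d=2$, where the perturbation $\Lambda r^2$ matches the natural scaling $r^d$ and a direct iteration yields only $BMO$ regularity for $e(u)$. Gaining any positive Hölder exponent requires finer work---for instance a hole-filling/Gehring improvement, or re-applying the quasi-minimality across dyadic scales to exploit actual decay of the excess $\|e(u)-e(w)\|^2$---after which classical elliptic bootstrap handles the promotion to $C^\infty$.
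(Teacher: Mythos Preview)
Your essential-closedness argument is correct and coincides with the paper's: the paper states in one line that $G(J,u,Q_{x,\rho})/\rho\to 0$ for $\Ha^1$-a.e.\ $x\in\Om\setminus J$ --- which is precisely the density computation you spell out for the absolutely continuous measure $\mu=|e(u)|^2\,\mathcal L^2$ --- and then Proposition~\ref{prop:lower-bound} forces $\Om\cap\overline{J^+}\subset J\cup N\subset J^+\cup(J\setminus J^+)\cup N$ with the last two pieces $\Ha^1$-negligible.

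For smoothness the paper and you part ways, and your worry is well founded. The paper does not attempt a Campanato iteration: after observing $u\in H^1_{\mathrm{loc}}(\Om\setminus\overline{J^+})$ via Korn, it simply invokes ``regularity theory for solutions of the Stokes equation''. But an abstract $(\Lambda,\bar r)$-quasi minimizer does \emph{not} satisfy Stokes: testing \eqref{eq:ineqG} with $v=u+t\psi$ for divergence-free $\psi\in C^\infty_c(Q_{x,r})$ only yields $\bigl|\int e(u):e(\psi)\bigr|\le\sqrt{\Lambda}\,r\,\|e(\psi)\|_{L^2}$, not $=0$. Your Campanato route is the honest attack, and the borderline obstruction you flag is genuine: in $d=2$ the deviation $\Lambda r^2=\Lambda r^d$ enters the iteration as a non-decaying error, and the best one extracts is $e(u)\in BMO_{\mathrm{loc}}$ (equivalently $u\in C^{0,\alpha}_{\mathrm{loc}}$ for every $\alpha<1$). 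Neither Gehring nor hole-filling lifts the Campanato exponent past $d$, and there is no elliptic bootstrap available because the Euler--Lagrange equation holds only up to an $O(r)$ defect.

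In short, the $C^\infty$ conclusion is too optimistic for abstract quasi-minimizers --- both in your proposal and in the paper's one-line proof. It becomes correct only in the proposition's sole application (the proof of Theorem~\ref{main2}): there $(J,u)$ comes from a genuine minimizer of $\J$, so once $u\in H^1_{\mathrm{loc}}(\Om\setminus\overline{J^+})$ is established, exact minimality of $\J$ (fix $E$ and the jump set, perturb $u$ by a compactly supported divergence-free field) gives the Stokes system exactly on $\Om\setminus\overline{J^+}$, and $C^\infty$ then follows from standard elliptic theory. If you want a self-contained proof of the proposition as literally stated, the realistic target for the second conclusion is $e(u)\in BMO_{\mathrm{loc}}(\Om\setminus\overline{J^+})$, which your iteration does deliver.
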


\begin{proof}
Since the functional $G$ coincides with a volume integral outside $J$, there exists a $\Ha^1$-negligible set $N\subset\Om\setminus J$ such that for every $x\in \Om\setminus (J\cup N)$ we have
\[
\lim_{\rho\to 0}\frac{G(J,u,Q_{x,\rho})}{\rho}=0.
\]
 Thanks to Proposition \ref{prop:lower-bound} we infer
$$
\Om\cap\overline{J^+}\subset J\cup N\subset J^+\cup(J\setminus J^+)\cup N.
$$
Since the last two sets are $\Ha^1$-negligible, we infer $\Ha^1(\Om\cap(\overline{J^+}\setminus J^+))=0$. 
\par
 Since 
$$
\Ha^1(J_u\setminus \overline{J^+})\le \Ha^1(J\setminus \overline{J^+})=0,
$$ 
we get that $u$ is locally $H^1$ on $\Om\setminus \overline{J^+}$ (thanks to Korn's inequality): smoothness then follows from the regularity theory for solutions to Stokes equation (see e.g. \cite[Theorem IV.5.8]{Boyer}).

\end{proof}

\subsection{Proof of Theorem \ref{main2}}
\label{sec:proof2}

We are now in a position to prove the regularity result given by Theorem \ref{main2}.

\par
 Let $(E,u)$ be a minimizer of $\J$ and let us set 
$$
\Lambda:=4\text{Lip}(f) \qquad\text{and}\qquad J:=J_u\cup\partial^* E.
$$ 
We also assume (up to multiplying $u$ by $c^{-\frac{1}{2}}$) that the constant $c$ of \eqref{eq_JEu} is $1$. 
\par
We first prove that $(J,u)$ is a  $(\Lambda,1)$ quasi minimizer of the Griffith functional $G$ on ${\mathcal V}(\Om)$ according to  Definition \ref{def:almostquasimin}.  Indeed, let $Q_{x,r}\Subset \Om$ with $r<1$ be a square as in Definition \ref{def:almostquasimin}, with associated competitor $(J\setminus Q_{x,r},v)$. We claim that either
\begin{equation}
\label{eq:disconnected}
\Ha^1(\partial Q_{x,r} \setminus E^{(1)})=0\qquad\text{or}\qquad \Ha^1(\partial Q_{x,r} \setminus E^{(0)})=0.
\end{equation}
In the first case,  from the minimality inequality
$$
\J(E,u)\le \J(E,u1_{\Om \setminus Q_{x,r}})
$$
we deduce $u=0$ a.e. on $Q_{x,r}$ and $\Ha^1(\partial^*E\cap Q_{x,r})=0$, so that the inequality to check for quasi minimality is trivially satisfied.
Notice that admissibility of $(E,u1_{\Om \setminus Q_{x,r}})$ for the main problem follows from the fact that the trace of $u$ on $\partial Q_{x,r}$ is zero, being that boundary composed of points of density one of the set $E$ on which $u$ vanishes.

\par
 If the second possibility in \eqref{eq:disconnected} holds true, then the relations (see \cite[Theorem 16.3]{Maggi_12} and recall that $\Ha^0(\partial Q_{x,r} \cap \partial^*E)=0$ by the properties of $Q_{x,r}$)
$$
\J(E,u)\le \J(E\setminus Q_{x,r},v)
\qquad\text{and}\qquad
\partial^*(E\setminus Q_{x,r})=\partial^*E \setminus \overline{Q_{x,r}}=\partial^*E \setminus Q_{x,r}
$$
yield in particular
\[
\int_{Q_{x,r}}|e(u)|^2\,dx+\Ha^1(J\cap Q_{x,r})\leq \int_{Q_{x,r}}|e(v)|^2\,dx+\Lambda r^2,
\]
so that the quasi minimality of $(J,u)$ follows.
\par
By Proposition \ref{prop:Jreg}, we get that the normalized set $J^+$ (see \eqref{eq_density}) is essentially closed in $\Om$, i.e.,
$$
\Ha^1(\Om\cap(\overline{J^+}\setminus J^+))=0,
$$ 
 and $u$ is smooth on $\Om\setminus \overline{J^+}$, so that 
$$
\Om \cap J_u\subseteq  \overline{J^+}.
$$ 
On the other hand, in view of the general properties of the reduced boundary of sets of finite perimeter (see \cite[Theorem 3.59]{AFP} or \cite[Theorem 15.5]{Maggi_12}) we have $\partial^*E \subseteq J^+$.  Taking into account that $\Ha^1(J^+\Delta J)=0$ (where $\Delta$ denotes the symmetric difference of sets)   we infer
\[\Ha^{1}(\Om\cap\overline{J_u \cup  \partial^*E}\setminus (J_u \cup \partial^*E))\leq \Ha^{1}(\Om\cap \overline{J^+}\setminus J)\leq \Ha^{1}(\Om\cap\overline{J^+}\setminus J^+)+\Ha^{1}(J^+\setminus J)=0\]
so that the conclusion follows.
\par
In order to complete the proof, we need to check claim \eqref{eq:disconnected}.
\par\noindent
Assume by contradiction that the claim is false. Then there exists $p\in E^{(1)}\cap\partial Q_{x,r}$ and $q\in E^{(0)}\cap\partial Q_{x,r}$ that are not in one of the corners. Without loss of generality we suppose $p,q\in \{x-r e_2+\R e_1\}$ with $p_1<q_1$, the case when both are in different sides being analog. We let  for $s>0$ small
\begin{itemize}
\item $C_p:=p+[-s,0]\times [0,s]$ and $C_q:=q+[0,s]^2$, 
\item $g_s:[p_1-s,q_1+s]\to [0,1]$  be zero at the extremes, affine on $[p_1-s,p_1]$ and $[q_1,q_1+s]$ and equal to $1$ on $[p_1,q_1]$,
\item  $f_s \in C^1_c(]0,s[)$ with $0\le f_s\le 1$, 
\item $\varphi_s(x)=g_s(x_1)f_s(x_2+ r)$.
\end{itemize}
Then
\begin{align*}
\Ha^1(J\cap (Q_{x,r}\setminus Q_{x,r-s}))&\geq \int_{\partial^*E}\varphi_s (\nu_E)_1\, d\Ha^1=\int_{E}\partial_{1}\varphi_s\, d\Ha^1\\
&=\frac{1}{ s}\int_{E\cap C_p}f_s(y_2+ r)dy-\frac{1}{ s }\int_{E\cap C_q}f_s(y_2+ r)dy
\end{align*}

so that, letting $f_s \nearrow 1$ we get
$$
\frac{\Ha^1(J\cap (Q_{x,r}\setminus Q_{x,r-s}))}{s}\geq \frac{|E\cap C_p|}{|C_p|}-\frac{|E\cap C_q|}{|C_q|}.
$$

Since as $s\to 0^+$,  by assumption on the density properties of $p$ and $q$,  we have
$$
\frac{|E\cap C_p|}{|C_p|}\to 1\qquad\text{and}\qquad \frac{|E\cap C_q|}{|C_q|}\to 0,
$$
we infer
$$
\limsup_{s\to 0}\frac{\Ha^1(J\cap (Q_{x,r}\setminus Q_{x,r-s}))}{s}\geq 1
$$ 
which is against the assumption on $r$ in Definition \ref{def:almostquasimin} of quasi minimality. The proof is thus concluded.

\subsection{Some remarks on a ``strong'' formulation of the problem}
\label{sec:strong}

In this section we elaborate on a different relaxation of the drag minimization problem which involves topologically closed (but not necessarily regular) obstacles $F$ in the channel $\Om$ and velocity vector fields which are $H^1_{loc}$ on $\Om\setminus F$.
\par
Within this perspective, given $\Om\subset \R^d$ open and bounded, it is natural to start with pairs $(F,u)$ such that 

\begin{equation}
\label{eq:topology}
\text{$F\subseteq \Om$ is relatively closed},\qquad \text{ $\Om\cap\partial F$ is rectifiable},\qquad \Ha^{d-1}(\Om\cap\partial F)<+\infty
\end{equation}
and
\begin{equation}
\label{eq:H1loc}
u\in H^1_\loc(\Om\setminus F;\R^d), \qquad \text{$\text{div}\,u=0$ in $\Om\setminus F$,}\qquad  e(u)\in L^2(\Om\setminus F;M^{d\times d}_{sym}).
\end{equation}
Notice that, as for the relaxation studied in the previous sections,  $\partial F$ may contain ``lower dimensional'' parts.
The set $\Om\setminus F$ is open, so that the space $H^1_{loc}(\Om\setminus F;\R^d)$ is well defined. 
\par
It is not clear how to talk about traces on $\partial(\Om\setminus F)$, which are fundamental to formulate the tangency constraint, as the set is in general not regular. It turns out that velocities admit a well defined trace on $\Ha^{d-1}$ almost every point $\partial F$ even if this set is not assumed to be only rectifiable and not regular. This is 
a consequence of the following result which involves the space $GSBD$ of {\it Generalised Functions of Bounded Deformations} introduced in \cite{M13}.
Let us set
\begin{equation}
\label{eq:utilde}
\tilde{u}:=
\begin{cases}
u& \text{ in }\Om\setminus F\\ 
0& \text{ in } F.
\end{cases}
\end{equation}

\begin{lemma}
 
Let $\Om\subseteq \R^d$ be a bounded open set, and assume that the pair $(F,u)$ satisfies \eqref{eq:topology} and \eqref{eq:H1loc}.
Then $\tilde{u}\in GSBD(\Om)$ with $\Ha^{d-1}(J_{\tilde{u}}\setminus \partial F)=0$. 

\end{lemma}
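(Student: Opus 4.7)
The plan is to verify that $\tilde u$ belongs to $GSBD(\Omega)$ by means of the slicing characterization of Dal Maso \cite{M13}, exploiting the finiteness of $\mathcal{H}^{d-1}(\Omega\cap\partial F)$ together with the $H^1_{\loc}$-regularity of $u$ on $\Omega\setminus F$. The key geometric ingredient is that $\partial F$ has ``thin'' one-dimensional slices in every direction, so the jumps of $\tilde u$ along almost every line will be confined to a finite set contained in $\partial F$.

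First I fix $\xi\in\R^d$ with $|\xi|=1$ and consider the projection $\pi_{\xi^\perp}$. By rectifiability of $\Omega\cap\partial F$ and the area formula,
\[
\int_{\xi^\perp}\Ha^0\big(\Omega\cap\partial F\cap (y+\R\xi)\big)\,d\Ha^{d-1}(y)\le \Ha^{d-1}(\Omega\cap\partial F)<+\infty,
\]
so for $\Ha^{d-1}$-a.e.\ $y\in\xi^\perp$ the set $(\Omega\cap\partial F)^\xi_y$ is finite. For such $y$, the open set $\Omega^\xi_y\setminus(\partial F)^\xi_y$ is a countable union of disjoint open intervals, and since $F$ is relatively closed in $\Omega$, connectedness forces each such interval to be entirely contained either in $F$ or in $\Omega\setminus F$.

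Next I examine $\hat{\tilde u}^\xi_y(t):=\tilde u(y+t\xi)\cdot \xi$ on such a slice. On the intervals falling into $F$ the function vanishes identically, while on those falling into $\Omega\setminus F$ it coincides with $\hat u^\xi_y$: since $u\in H^1_{\loc}(\Omega\setminus F;\R^d)$ with $e(u)\in L^2$, the classical Fubini-type slicing result (applied in the spirit of Theorem \ref{thm:sbd-1s}, but on the open set $\Omega\setminus F$) yields that for $\Ha^{d-1}$-a.e.\ $y$ the slice is in $H^1_{\loc}$ on $(\Omega\setminus F)^\xi_y$ with derivative $(e(u)\xi\cdot\xi)(y+t\xi)$ a.e.\ and finite $L^2$ norm along almost every line. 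Hence $\hat{\tilde u}^\xi_y\in SBV_{\loc}(\Omega^\xi_y)$, with jump set contained in the finite set $(\partial F)^\xi_y$ and approximate derivative $(e(u)\xi\cdot\xi)(y+t\xi)\mathbf 1_{(\Omega\setminus F)^\xi_y}$.

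The required $GSBD$ estimates are then obtained by integrating over $\xi^\perp$: Fubini's theorem gives
\[
\int_{\xi^\perp}\!\!\int_{\Omega^\xi_y}\!|(\hat{\tilde u}^\xi_y)'|^2\,dt\,d\Ha^{d-1}(y)\le \int_{\Omega\setminus F}|e(u)\xi\cdot\xi|^2\,dx<+\infty,
\]
and the area formula above bounds $\int_{\xi^\perp}\Ha^0(J_{\hat{\tilde u}^\xi_y})\,d\Ha^{d-1}(y)$ by $\Ha^{d-1}(\Omega\cap\partial F)$, uniformly in $\xi$. These two bounds, applied also to truncations of $\tilde u$ (which inherit the same slicing structure), furnish a finite control measure $\lambda$ in the sense of \cite{M13} and therefore place $\tilde u$ in $GSBD(\Omega)$. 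Finally, the inclusion $\Ha^{d-1}(J_{\tilde u}\setminus \partial F)=0$ is immediate: since $F$ is closed, $\Omega\setminus\partial F$ splits into the open sets $F^\circ$ (on which $\tilde u\equiv 0$) and $\Omega\setminus F$ (on which $\tilde u=u\in H^1_{\loc}$), and neither contributes to the jump set up to $\Ha^{d-1}$-null sets.

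The main obstacle I foresee is the careful bookkeeping in the slicing characterization of $GSBD$, especially handling the possible unboundedness of $u$ near $\partial F$; this is resolved by working with truncations, for which the argument above transfers verbatim since truncation neither creates new jumps nor worsens the $L^2$ bound on the approximate derivative.
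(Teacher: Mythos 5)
Your argument is correct, but it follows a genuinely different route from the paper. You verify Dal Maso's slicing definition of $GSBD$ directly: for each fixed $\xi$ the coarea formula on the rectifiable set $\Om\cap\partial F$ gives $\int_{\xi^\perp}\Ha^0((\partial F)^\xi_y)\,d\Ha^{d-1}(y)\le \Ha^{d-1}(\Om\cap\partial F)$, the connectedness argument (valid because $F$ is relatively closed, so $F\setminus\partial F$ is open in $\Om$) confines the jumps of almost every slice to the finitely many points of $(\partial F)^\xi_y$, and between these points the slice of $\tilde u\cdot\xi$ is $W^{1,2}$ with derivative $(e(u)\xi\cdot\xi)$, hence has finite one-sided limits; this yields $SBV_{\loc}$ slices and, after truncation and localization to Borel sets, an explicit control measure of the form $\lambda= |e(u)|\,1_{\Om\setminus F}\,dx+C\,\Ha^{d-1}\lfloor(\Om\cap\partial F)$. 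The paper instead covers $\Om\cap\partial F$ by finitely many balls $B^\eps_i$ of radius at most $\eps$ with $\sum_i(\mathrm{diam}\,B^\eps_i/2)^{d-1}\le C$, observes that $u^\eps:=\tilde u\,1_{\Om\setminus B^\eps}$ is $SBD(\Om)$ with jump set in $\partial B^\eps$ and uniformly bounded $\int|e(u^\eps)|^2\,dx+\Ha^{d-1}(J_{u^\eps})$, and then invokes the $GSBD$ compactness theorem of Chambolle--Crismale \cite{CC20} to identify the a.e.\ limit $\tilde u$ as a $GSBD$ function with $J_{\tilde u}\subseteq\partial F$ up to $\Ha^{d-1}$-null sets. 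Your route is more self-contained (it uses only the definition of $GSBD$ from \cite{M13}, no compactness machinery) and produces an explicit control measure, at the price of the slicing bookkeeping; the paper's route is shorter given the available compactness theorem and bypasses the one-dimensional analysis entirely. Two small points to tighten in your write-up: Dal Maso's condition asks for a total-variation ($L^1$-type) bound of the truncated slices localized on every Borel set $B$, so you should state $\int_{\xi^\perp}|D\tau(\hat{\tilde u}^\xi_y)|(B^\xi_y\setminus J^1_{\hat{\tilde u}^\xi_y})\,d\Ha^{d-1}(y)\le\int_{B\setminus F}|e(u)|\,dx$ (which follows from your $L^2$ bound by Cauchy--Schwarz since $\Om$ is bounded) rather than the global $L^2$ estimate you display, and the jump-count bound should likewise be localized, which the same coarea computation restricted to $B\cap\partial F$ gives at once. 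Neither point is a gap, only a matter of matching the precise form of the definition.
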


\begin{proof}
Since $\Ha^{d-1}(\Om\cap\partial F)<\infty$,  for every $\eps>0$ we may find some covering of $\partial F$ through a finite union of balls of radius less than $\eps$, denoted $(B^\eps_i)_{1\leq i\leq N^\eps}$, such that 
$$
\sum_{i=1}^{N^\eps}\left(\frac{\text{diam}(B_i^\eps)}{2}\right)^{d-1}\leq C
$$ 
for some $C>0$ that does not depend on $\eps$. Let $B^\eps$ be the union of these balls - which is a Lipschitz set up to a small perturbation of the radii - and let $u^\eps:=u1_{\Om\setminus B^\eps}$. Then $u^\eps\in SBD(\Om)$ with
\[
Eu^\eps=e(u)\,dx\lfloor (\Om\setminus (F\cup B^\eps))+u\Ha^{d-1}\lfloor \partial B^\eps.
\]
Moreover
\[
u^\eps\to \tilde{u}\qquad\text{a.e. in $\Om$}
\]
with
\[
\limsup_{\eps\to 0}\int_{\Om}|e(u^\eps)|^2\,dx+\Ha^{d-1}(J_{u^\eps})<+\infty.
\]
We apply \cite[Theorem 1.1]{CC20} to $(u^\eps)$: since $\tilde{u}$ is finite almost everywhere, we directly identify $\tilde{u}$ with the limit that is obtained, and we infer $\tilde{u}\in GSBD(\Om)$; moreover up to a $\Ha^{d-1}$-negligible set, $J_{\tilde{u}}\subset \partial F$ by construction, and the result follows.
\end{proof}

Coming back to configurations $(F,u)$ satisfying \eqref{eq:topology} and \eqref{eq:H1loc}, up to a choice of orientation of the rectifiable set $\Om\cap\partial F$,
there is no ambiguity in defining the traces $u^\pm_{|\partial F}$ of $u$ on $\Ha^{d-1}$-almost all points of $\Om\cap \partial F$. 
\par
 In addition to the previous items, we thus require also for $(F,u)$ the non-penetration condition
\begin{equation}
\label{eq:tangencyF}
u^\pm_{|\partial F}\cdot \nu_{\partial F}=0\qquad\text{$\Ha^{d-1}$-a.e. on $\Om\cap \partial F$.}
\end{equation}

Given an admissible configuration $(F,u)$, we can consider the following energy  (all the constants have been normalized to $1$)
\begin{align*}
J(F,u):=&\int_{\Om\setminus F} |e(u)|^2\,dx+\int_{\Om\cap\partial^eF}|u^+|^2\,d\Ha^{d-1}+\int_{\Om\cap F^{(0)}\cap F}[|u^+|^2+|u^-|^2]\,d\Ha^{d-1}\\
&+\Ha^{d-1}(\Om\cap\partial^eF)+2\Ha^{d-1}(\Om\cap  F^{(0)}\cap F)+f(|F|),
\end{align*}
 where $\partial^eF$ denotes the {\it measure theoretical boundary} of $F$, and $f$ is the penalization function introduced in the previous sections (see \eqref{eq:f}).

Configuration with finite energy are linked to admissible configurations of our main relaxed problem by the following result.

\begin{lemma}
\label{lem:str-sbd}
Let $\Om\subseteq \R^d$ be open and bounded, and let $(F,u)$ satisfy \eqref{eq:topology} and \eqref{eq:H1loc} with $J(F,u)<+\infty$. Then the function $\tilde u$ defined in \eqref{eq:utilde} is such that $\tilde u\in SBD(\Om)$.
\end{lemma}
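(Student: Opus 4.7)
The plan is to build on the preceding lemma, which already gives $\tilde u\in GSBD(\Om)$ with $\Ha^{d-1}(J_{\tilde u}\setminus \partial F)=0$. To upgrade the regularity from $GSBD$ to $SBD$, I need (i) $\tilde u\in L^1(\Om;\R^d)$, (ii) the symmetric distributional derivative $E\tilde u$ is a finite Radon measure on $\Om$, and (iii) the absence of a ``Cantor'' part in $E\tilde u$. Point (iii) comes essentially for free because $GSBD$ functions have a well defined approximate symmetric gradient and approximate jump set and, under the structural condition $|E\tilde u|(\Om)<+\infty$ that we shall obtain, no diffuse singular part appears. Thus the heart of the proof lies in (i) and (ii).

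First I would bound the absolutely continuous part. The approximate symmetric gradient satisfies $e(\tilde u)=e(u)\mathbf{1}_{\Om\setminus F}$ a.e., so $J(F,u)<+\infty$ gives $\|e(\tilde u)\|_{L^2(\Om)}^2=\int_{\Om\setminus F}|e(u)|^2\,dx<+\infty$, hence $e(\tilde u)\in L^1(\Om)$ since $\Om$ is bounded. For the jump part, the previous lemma locates $J_{\tilde u}$ inside $\Om\cap\partial F$ (up to $\Ha^{d-1}$-null sets). At $\Ha^{d-1}$-a.e.\ point of $\partial^eF$ one of the two traces of $\tilde u$ is $0$ (the one from the $F$-side) and the other is $u^+$, while at $\Ha^{d-1}$-a.e.\ point of $\Om\cap F^{(0)}\cap F$ both traces of $\tilde u$ agree with the two traces of $u$. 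In either case $|\tilde u^+-\tilde u^-|\le|u^+|+|u^-|$ on $J_{\tilde u}$. The finiteness of the surface Navier energy in $J(F,u)$, together with $\Ha^{d-1}(\Om\cap\partial F)<+\infty$ and the Cauchy--Schwarz inequality, then yields
\[
\int_{J_{\tilde u}}|\tilde u^+-\tilde u^-|\,d\Ha^{d-1}\le \Ha^{d-1}(\Om\cap\partial F)^{\frac12}\Bigl(\int_{\partial F}(|u^+|^2+|u^-|^2)\,d\Ha^{d-1}\Bigr)^{\frac12}<+\infty,
\]
proving (ii).

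For the integrability (i), I would use the approximating sequence $u^\eps:=u\mathbf{1}_{\Om\setminus B^\eps}$ already constructed in the proof of the previous lemma. Each $u^\eps$ lies in $SBD(\Om)$ and satisfies uniform bounds on $\int_\Om|e(u^\eps)|^2\,dx$ (bounded by $\int_{\Om\setminus F}|e(u)|^2\,dx$) and on $\Ha^{d-1}(J_{u^\eps})$ (bounded by $\Ha^{d-1}(\partial B^\eps)$, which is controlled by the covering bound $\sum_i(\mathrm{diam}(B_i^\eps)/2)^{d-1}\le C$). Combining this with the previously proved Step~(ii) above, one may invoke a Korn--Poincar\'e-type inequality for $(G)SBD$ to obtain a uniform $L^1$ (indeed $L^{d/(d-1)}$) bound along the sequence. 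The crucial point enabling one to discard the additive rigid-motion ambiguity inherent to Korn-type inequalities is that $\tilde u$ vanishes on $F$: if $|F|>0$ the only infinitesimal rigid motion compatible with $\tilde u=0$ on a set of positive measure is zero, and if $|F|=0$ one argues on the connected components of $\Om\setminus F$, using the control on the traces on $\partial F$ (finite in $L^2$) to close the estimate. Passing to the limit $\eps\to 0$ (with $u^\eps\to \tilde u$ a.e.\ from the previous lemma's construction) and applying Fatou yields $\tilde u\in L^1(\Om;\R^d)$.

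The main obstacle is the integrability step (i): although $GSBD$ is closed under the obvious a.e.\ limit, the passage from $GSBD$ to $SBD$ requires an $L^1$ bound that is not automatic from the finiteness of the derivative measure. The non-penetration and zero-on-$F$ conditions are essential to break the translation invariance and to pin down the affine modification in the Korn--Poincar\'e estimate, and this is the part where the bounds on the traces $u^\pm$ on $\partial F$ coming from the finite Navier energy play the decisive role.
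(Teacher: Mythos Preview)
Your ingredients are right, but the logical structure of step (ii) is circular. You write that bounding $\|e(\tilde u)\|_{L^1}$ and $\int_{J_{\tilde u}}|\tilde u^+-\tilde u^-|\,d\Ha^{d-1}$ ``proves (ii)'', i.e.\ that $E\tilde u$ is a finite Radon measure. But for a $GSBD$ function the symmetric distributional derivative need not be a measure at all; the identity $E\tilde u = e(\tilde u)\,dx + (\tilde u^+-\tilde u^-)\odot\nu\,\Ha^{d-1}\lfloor J_{\tilde u}$ is precisely the $SBD$ structure you are trying to establish, so you cannot invoke it to bound $|E\tilde u|$. The correct bridge from your two bounds to $\tilde u\in BD$ is the \emph{slicing characterisation}: for each $\xi\in\mathbb S^{d-1}$ and a.e.\ $y\in\xi^\perp$, the one-dimensional section $\widehat{\tilde u}^\xi_y$ is $SBV$ with $|D\widehat{\tilde u}^\xi_y|(\Om^\xi_y)$ controlled, and integrating over $y$ gives a finite quantity bounded by $\|e(\tilde u)\xi\cdot\xi\|_{L^1}+\int_{J_{\tilde u}}|(\tilde u^+-\tilde u^-)\cdot\xi||\xi\cdot\nu|\,d\Ha^{d-1}$. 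Together with $\tilde u\in L^1$, this is exactly the hypothesis of the structure theorem in \cite{ACDM} yielding $\tilde u\in BD$, and the absence of a Cantor part on sections then gives $SBD$.

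This is in fact the paper's route: it writes directly, for every $\xi$,
\[
J(F,u)\ge \int_{\xi^\perp}\Bigl[\int_{\Om^\xi_y}|(\widehat{\tilde u}^\xi_y)'|^2\,dt+\sum_{t\in J_{\widehat{\tilde u}^\xi_y}}\bigl(1+|(\widehat{\tilde u}^\xi_y)^+|^2+|(\widehat{\tilde u}^\xi_y)^-|^2\bigr)\Bigr]d\Ha^{d-1}(y),
\]
which is a stronger slice control than the one your bounds produce (it keeps the $L^2$ norm of the section derivative and the squares of the traces, not just the total variation). The paper is extremely terse and does not discuss the $L^1$ issue at all; on that point your proposal is more careful. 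Your Korn--Poincar\'e argument for $|F|>0$ is sound, while the $|F|=0$ case remains sketchy. An alternative, closer to the spirit of the paper, is to exploit the full slice inequality above in the manner of Theorem~\ref{thm:embedding}: on each section it controls $\|(\widehat{\tilde u}^\xi_y)^2\|_{BV}$, which is the mechanism behind the $L^{2d/(d-1)}$ embedding once one has some anchoring of the sections.
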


\begin{proof}
It suffices to note that for every direction $\xi\in\mathbb{S}^{d-1}$ we have 
\[
J(F,u)\geq \int_{\xi^\bot}\left[\int_{\Om_y^\xi}|(\tilde{u}_y^\xi)'(t)|^2dt+\sum_{t\in J_{\tilde{u}_y^\xi}}\left(1+|(\tilde{u}_y^\xi)^+(t)|^2+|(\tilde{u}_y^\xi)^-(t)|^2\right)\right]d\Ha^1(y).
\]
\end{proof}

Dealing with boundary conditions yields to the same problem highlighted in our main relaxation. Assume $\Om$ has a Lipschitz boundary, and let us write simply $u$ in place of $\tilde u$. We have that $u\in SBD(\Om)$ so that the trace on $\partial\Om$ is well defined. Given a divergence free vector field $V\in C^1(\R^d;\R^d)$, we can deal with the relaxation of the boundary condition by considering the set
$$
\Gamma_{u,V}:=\{x\in\partial \Om\,:\, u(x)\not=V(x)\},
$$
and enforcing the non-penetration constraint leading to
\begin{equation}
\label{eq:Gamma}
u\cdot \nu_{\partial\Om}=0\quad\text{and}\quad V\cdot \nu_{\partial\Om}=0\qquad\text{$\Ha^{d-1}$-a.e. on $\Gamma_{V,\partial\Om}$}.
\end{equation}
So for a configuration $(F,u)$ satisfying \eqref{eq:topology}, \eqref{eq:H1loc}, \eqref{eq:tangencyF} and \eqref{eq:Gamma}, we can consider the energy
$$
\J^{\text{strong}}(F,u):=J(F,u)+2\Ha^{d-1}(\Gamma_{u,V})+\int_{\Gamma_{u,V}}[|V|^2+|u|^2]\,d\Ha^{d-1}.
$$
The minimization of $\J^{\text{strong}}$ on admissible configurations is a different possible relaxation of the original drag minimization problem. We clearly have
$$
\min_{(E,u)\in \mathcal{A}_V(\Om)}\J(E,u)\le \inf_{(F,u)}\J^{\text{strong}}(F,u).
$$
Equality is reached in dimension two thanks to the regularity result given by Theorem \ref{main2}. Indeed, if $(E,u)$ is a minimizer for $\J$, we know that $\partial^*E\cup J_u$ is essentially closed, so that an admissible relatively closed set $F$ arises by considering the complement of the union of the connected components of 
$\Om\setminus \overline{\partial^*E\cup J_u}$ on which $u$ does not vanish identically. The function $u$ is smooth outside $F$, so that the pair $(F,u)$ is strongly admissible with $\J^{\text{strong}}(F,u)=\J(E,u)$. As a consequence, in dimension two the relaxed problem
$$
 \min_{(F,u)}\J^{\text{strong}}(F,u)
 $$
 is indeed well posed.

\bibliographystyle{plain}
\bibliography{ag2021}

\end{document}